\numberwithin{equation}{section}
\newtheorem{theorem*}{Theorem}
\theoremstyle{definition}
\theoremstyle{remark}
\numberwithin{equation}{section} \theoremstyle{plain}
\newtheorem*{thm*}{Main Theorem}
\newtheorem{thm}{Theorem}[section]
\newtheorem{cor}[thm]{Corollary}
\newtheorem*{cor*}{Corollary}
\newtheorem{lem}[thm]{Lemma}
\newtheorem*{lem*}{Lemma}
\newtheorem{prop}[thm]{Proposition}
\newtheorem*{prop*}{Proposition}
\newtheorem{rem}[thm]{Remark}
\newtheorem*{rem*}{Remark}
\newtheorem*{exa*}{Example}
\newtheorem{df}[thm]{Definition}
\newtheorem*{df*}{Definition}
\newtheorem*{conj*}{Conjecture}
\newtheorem*{Fa*}{Fact}
\newtheorem*{Qu*}{Question}
\newtheorem*{ack*}{ACKNOWLEDGEMENTS}
\newcommand{\pf}{\noindent\begin {proof}}
\newcommand{\epf}{\end{proof}}
\newcommand{\Ext}{\mbox{\rm Ext}}
\newcommand{\Hom}{\mbox{\rm Hom}}
\newcommand{\Tor}{\mbox{\rm Tor}}
\def\Im{\mathop{\rm Im}\nolimits}
\def\mod{\mathop{\rm mod}\nolimits}
\def\Mod{\mathop{\rm Mod}\nolimits}
\def\fd{\mathop{\rm fd}\nolimits}
\def\id{\mathop{\rm id}\nolimits}
\def\pd{\mathop{\rm pd}\nolimits}
\def\max{\mathop{\rm max}\nolimits}
\def\sup{\mathop{\rm sup}\nolimits}
\def\inf{\mathop{\rm inf}\nolimits}
\def\Gfd{\mathop{\rm G\text{-}fd}\nolimits}
\def\Gpd{\mathop{\rm G\text{-}pd}\nolimits}
\def\Gid{\mathop{\rm G\text{-}id}\nolimits}
\def\GCfd{\mathop{\rm G_{\it C}\text{-}fd}\nolimits}
\def\GCpd{\mathop{\rm G_{\it C}\text{-}pd}\nolimits}
\def\GCid{\mathop{\rm G_{\it C}\text{-}id}\nolimits}
\def\Hom{\mathop{\rm Hom}\nolimits}
\def\Ext{\mathop{\rm Ext}\nolimits}
\def\sup{\mathop{\rm sup}\nolimits}
\def\lim{\mathop{\underrightarrow{\rm lim}}\nolimits}
\def\res{\mathop{\rm res}\nolimits}
\def\cores{\mathop{\rm cores}\nolimits}
\def\FPD{\mathop{{\rm FPD}}\nolimits}
\def\FID{\mathop{{\rm FID}}\nolimits}
\begin{document}
\begin{center}
{\large \bf Homological Dimensions Relative to Preresolving Subcategories II}
\footnote{The research was partially supported by NSFC (Grant Nos. 11971225, 12171207).}

\vspace{0.5cm}
Zhaoyong Huang\\
{\footnotesize\it Department of Mathematics, Nanjing University,
Nanjing 210093, Jiangsu Province, P.R. China}\\
{\footnotesize\it E-mail: huangzy@nju.edu.cn}
\end{center}

\bigskip
\centerline { \bf  Abstract}
\bigskip
\leftskip10truemm \rightskip10truemm \noindent
Let $\mathscr{A}$ be an abelian category having enough projective and injective objects,
and let $\mathscr{T}$ be an additive subcategory of $\mathscr{A}$ closed under direct summands.
A known assertion is that in a short exact sequence in $\mathscr{A}$, the $\mathscr{T}$-projective
(respectively, $\mathscr{T}$-injective) dimensions of any two terms can sometimes induce an upper
bound of that of the third term by using the same comparison expressions. We show that
if $\mathscr{T}$ contains all projective (respectively, injective) objects of $\mathscr{A}$, then
the above assertion holds true if and only if $\mathscr{T}$ is resolving (respectively, coresolving).
As applications, we get that a left and right Noetherian ring $R$ is $n$-Gorenstein if and only if
the Gorenstein projective (respectively, injective, flat) dimension of any left $R$-module is
at most $n$. In addition, in several cases, for a subcategory $\mathscr{C}$ of $\mathscr{T}$,
we show that the finitistic $\mathscr{C}$-projective and $\mathscr{T}$-projective dimensions of
$\mathscr{A}$ are identical.
\vbox to 0.3cm{}\\ \\
{\it Key Words:}  Relative projective dimension, Relative injective dimension, Finitistic dimension,
Gorenstein rings, Gorenstein projective dimension, Gorenstein injective dimension, Gorenstein flat dimension.
\vbox to 0.2cm{}\\ \\
{\it 2020 Mathematics Subject Classification:} 18G25, 16E10.
\leftskip0truemm \rightskip0truemm

\section { \bf Introduction}

Homological dimensions are fundamental invariants in homological theory,
which play a crucial role in studying the structures of modules and rings.
Let $R$ be an arbitrary ring and $\Mod R$ the category of left $R$-modules,
and let $\mathscr{T}$ be a subcategory of $\Mod R$.
For a module $A\in\Mod R$, we use $\mathscr{T}{\text -}\pd A$ to denote the $\mathscr{T}$-projective
dimension of $A$. Let
$$0 \to A_1 \to A_2 \to A_3 \to 0$$
be an exact sequence in $\Mod R$. Consider the following assertions.
\begin{enumerate}
\item[$(1)$] $\mathscr{T}{\text -}\pd A_2\leq
\max\{\mathscr{T}{\text -}\pd A_1,\mathscr{T}{\text -}\pd A_3\}$
with equality if
$\mathscr{T}{\text -}\pd A_1+1\neq\mathscr{T}{\text -}\pd A_3$.
\item[$(2)$] $\mathscr{T}{\text -}\pd A_1\leq
\max\{\mathscr{T}{\text -}\pd A_2,\mathscr{T}{\text -}\pd A_3-1\}$
with equality if
$\mathscr{T}{\text -}\pd A_2\neq\mathscr{T}{\text -}\pd A_3$.
\item[$(3)$] $\mathscr{T}{\text -}\pd A_3\leq
\max\{\mathscr{T}{\text -}\pd A_1+1,\mathscr{T}{\text -}\pd A_2\}$
with equality if
$\mathscr{T}{\text -}\pd A_1\neq\mathscr{T}{\text -}\pd A_2$.
\end{enumerate}
It has been known that these assertions hold true
if $\mathscr{T}$ is the subcategory of $\Mod R$ consisting of one kind of
the following modules: $(a)$ projective modules; $(b)$ flat modules; $(c)$ Gorenstein projective modules
(\cite[Lemma 2.4]{BM}); $(d)$ $C$-Gorenstein projective modules with $C$ a semidualizing bimodule
(\cite[Lemma 3.2]{LHX}); $(e)$ Gorenstein flat modules 
(\cite[Theorem 2.11]{B1} and \cite[Theorem 4.11]{SS}), $(f)$ Auslander classes
(\cite[Corollary 4.5]{Hu3}), and so on. It is natural to ask the following question:
what properties should a subcategory of $\Mod R$ have, in order for properties (1), (2) and (3) to hold?
One of the aims in this paper is to study this question. In fact, we will show that if $\mathscr{T}$
is an additive subcategory of $\Mod R$ which is closed under direct summands and contains
all projective left $R$-modules, then
the above assertions hold true if and only if $\mathscr{T}$ is resolving.

On the other hand, Auslander and Bridger proved that a commutative Noetherian local ring $R$ is Gorenstein
if and only if any finitely generated $R$-module has finite Gorenstein dimension (or Gorenstein projective
dimension in more popular terminology) (\cite[Theorem 4.20]{AB}). Then Hoshino developed
Auslander and Bridger's arguments to prove that an artin algebra $R$ is Gorenstein if and only if
any finitely generated left $R$-module has finite Gorenstein dimension (\cite[Theorem]{Ho}).
Furthermore, Huang and Huang generalized it to left and right Noetherian rings (\cite[Theorem 1.4]{HuH}).
By applying the results obtained by studying the question mentioned above,
our another aim is to generalize this result to arbitrary modules over left and right Noetherian rings.
Note that for a left and right Noetherian ring $R$, if $R$ is $n$-Gorenstein
(that is, the left and right self-injective dimensions of $R$ are at most $n$),
then the Gorenstein projective dimension of any left $R$-module is at most $n$ (\cite[Theorem 11.5.1]{EJ}).
However, the converse seems to be far from clear.

The paper is organized as follows.
In Section 2, we give some notions and notations which will be used in the sequel.

Let $\mathscr{A}$ be an abelian category having enough projective objects.
In Section 3, we first prove the following result.

\begin{thm} \label{thm-1.1} {\rm (Theorem \ref{thm-3.2})}
Let $\mathscr{T}$ be an additive subcategory of $\mathscr{A}$
which is closed under direct summands and contains all
projective objects of $\mathscr{A}$. Then the following statements are equivalent.
\begin{enumerate}
\item[$(1)$] $\mathscr{T}$ is resolving.
\item[$(2)$] For any exact sequence
$$0 \to A_1 \to A_2 \to A_3 \to 0$$
in $\mathscr{A}$, we have
\begin{enumerate}
\item[$(i)$] $\mathscr{T}{\text -}\pd A_2\leq
\max\{\mathscr{T}{\text -}\pd A_1,\mathscr{T}{\text -}\pd A_3\}$ with equality if
$\mathscr{T}{\text -}\pd A_1+1\neq\mathscr{T}{\text -}\pd A_3$.
\item[$(ii)$] $\mathscr{T}{\text -}\pd A_1\leq
\max\{\mathscr{T}{\text -}\pd A_2,\mathscr{T}{\text -}\pd A_3-1\}$ with equality if
$\mathscr{T}{\text -}\pd A_2\neq\mathscr{T}{\text -}\pd A_3$.
\item[$(iii)$] $\mathscr{T}{\text -}\pd A_3\leq
\max\{\mathscr{T}{\text -}\pd A_1+1,\mathscr{T}{\text -}\pd A_2\}$ with equality if
$\mathscr{T}{\text -}\pd A_1\neq\mathscr{T}{\text -}\pd A_2$.
\end{enumerate}
\end{enumerate}
\end{thm}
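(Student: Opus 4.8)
The plan is to prove the two implications separately. The direction $(2)\Rightarrow(1)$ is short, so I would dispatch it first; the substance of the theorem lies in $(1)\Rightarrow(2)$.

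For $(2)\Rightarrow(1)$, I would note that since $\mathscr{T}$ is already assumed additive, closed under direct summands, and to contain all projective objects of $\mathscr{A}$, being resolving comes down to being closed under extensions and under kernels of epimorphisms. If $0\to A_1\to A_2\to A_3\to 0$ is exact with $A_1,A_3\in\mathscr{T}$, then $\mathscr{T}{\text -}\pd A_1\le 0$ and $\mathscr{T}{\text -}\pd A_3\le 0$, so the inequality in $(i)$ gives $\mathscr{T}{\text -}\pd A_2\le\max\{0,0\}=0$, i.e.\ $A_2\in\mathscr{T}$; and if instead $A_2,A_3\in\mathscr{T}$, the inequality in $(ii)$ gives $\mathscr{T}{\text -}\pd A_1\le\max\{0,-1\}=0$, i.e.\ $A_1\in\mathscr{T}$. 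Hence $\mathscr{T}$ is resolving. (Only the inequalities in $(i)$ and $(ii)$ are used, and $(iii)$ plays no role.)

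For $(1)\Rightarrow(2)$, assume $\mathscr{T}$ is resolving. I would first isolate the syzygy-truncation lemma that carries the argument: for any exact sequence $0\to K\to T\to M\to 0$ with $T\in\mathscr{T}$ and any $n\ge 1$, one has $\mathscr{T}{\text -}\pd M\le n$ if and only if $\mathscr{T}{\text -}\pd K\le n-1$. The implication ``$\Leftarrow$'' is just splicing a $\mathscr{T}$-resolution of $K$ onto the given sequence. For ``$\Rightarrow$'' I would induct on $n$. When $n=1$: take a $\mathscr{T}$-resolution $0\to T_1'\to T_0'\to M\to 0$, form the pullback $Q=T\times_M T_0'$, read off $Q\in\mathscr{T}$ from $0\to T_1'\to Q\to T\to 0$ (closure under extensions), and then $K\in\mathscr{T}$ from $0\to K\to Q\to T_0'\to 0$ (closure under kernels of epimorphisms). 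When $n\ge 2$: choose a projective object $P$ with an epimorphism $P\twoheadrightarrow M$ and kernel $\Omega$, pull the given sequence back along it, and use projectivity of $P$ to split off $P$; this produces an exact sequence $0\to\Omega\to K\oplus P\to T\to 0$ with $T\in\mathscr{T}$ and $\mathscr{T}{\text -}\pd K=\mathscr{T}{\text -}\pd(K\oplus P)$. Bounding $\mathscr{T}{\text -}\pd(K\oplus P)$ then reduces, via a horseshoe-type construction performed over a \emph{projective} cover of $T$ (so the needed lifting exists) and repeated use of closure under kernels of epimorphisms, to lower-dimensional instances of the lemma and of $(i)$--$(iii)$. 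So $(i)$, $(ii)$, $(iii)$ and the lemma are really proved together by one induction on the $\mathscr{T}$-projective dimensions occurring; once the lemma is in hand, the deductions of $(i)$--$(iii)$ are the familiar dimension counts from ordinary projective dimension, with the cases where some $\mathscr{T}{\text -}\pd A_i=\infty$ settled by using closure under extensions to splice $\mathscr{T}$-resolutions of two terms into one of the third. (If a dimension-shifting statement for (pre)resolving subcategories is already on record, this direction collapses to that last bookkeeping.)

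The step I expect to be the genuine obstacle is the ``$\Rightarrow$'' half of the syzygy-truncation lemma. Unlike the classical setting, Schanuel's lemma fails for an arbitrary resolving subcategory: the pullback comparing two $\mathscr{T}$-resolutions of $M$ need not split, because the objects of $\mathscr{T}$ lying over $M$ are not projective. The way around this is to route every comparison through an honest projective resolution — where liftings exist and a projective summand can be split off — and then to re-enter $\mathscr{T}$ via closure under kernels of epimorphisms, the whole argument being run as an induction on dimension. Verifying that this induction closes up, in particular that the lower-dimensional cases of $(i)$--$(iii)$ may legitimately be invoked while proving the lemma at the next level, is the one delicate bookkeeping point; everything else is routine.
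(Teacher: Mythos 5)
Your proposal is correct and follows essentially the same route as the paper: your syzygy-truncation lemma is precisely the paper's Lemma \ref{lem-3.1}(1) (which the paper cites from \cite[Theorem 3.6]{Hu2} and \cite[Lemma 3.12]{AB} rather than reproving), and your derivation of $(i)$--$(iii)$ from it via the horseshoe lemma and pullbacks along projective presentations, together with the purely formal combination of the three inequalities to get the equality clauses, is exactly the proof of Theorem \ref{thm-3.2}. The one point to watch is that in your $n\ge 2$ step the bound $\mathscr{T}\text{-}\pd\,\Omega\le n-1$ is an instance of the lemma at level $n$ (with $T$ projective), not a lower-dimensional instance, so your induction must first secure the projective-presentation case (by pulling back against the given $\mathscr{T}$-resolution and using that the kernel of an epimorphism onto an object of $\mathscr{T}$ has $\mathscr{T}$-projective dimension bounded by that of the source) before bootstrapping to arbitrary $T\in\mathscr{T}$ --- this is the standard Auslander--Bridger argument and it does close up.
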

Then we apply it to prove that
if $\mathscr{T}$ is a resolving subcategory of $\mathscr{A}$ which is closed
under direct summands and admits an $\mathscr{E}$-coproper
cogenerator $\mathscr{C}$ with $\mathscr{E}$ a subcategory of $\mathscr{A}$,
then the finitistic $\mathscr{T}$-projective dimension of $\mathscr{A}$
is at most its finitistic $\mathscr{C}$-projective dimension, and with equality when
$\Ext^{\geq 1}_{\mathscr{A}}(T,C)=0$ for any $T\in\mathscr{T}$ and $C\in\mathscr{C}$
(Corollary \ref{cor-3.5}). We also list the duals of these results without proofs
(Theorem \ref{thm-3.2'} and Corollary \ref{cor-3.5'}).

In Section 4, we first present a partial list of examples of how the results obtained in Section 3 can be applied
(Remark \ref{rem-4.4}). Then it is shown that Corollaries \ref{cor-3.5} and \ref{cor-3.5'} can be applied
in many cases for module categories (Corollaries \ref{cor-4.5}--\ref{cor-4.7}).
Some known results are obtained as corollaries. The main result in this section is the following theorem.

\begin{thm} \label{thm-1.2} {\rm (Theorems \ref{thm-4.9}, \ref{thm-4.11} and \ref{thm-4.13})}
Let $R$ be a left and right Noetherian ring and $n\geq 0$. Then the following statements are equivalent.
\begin{enumerate}
\item[$(1)$] $R$ is $n$-Gorenstein.
\item[$(2)$] The Gorenstein projective dimension of any left $R$-module is at most $n$.
\item[$(3)$] The Gorenstein injective dimension of any left $R$-module is at most $n$.
\item[$(4)$] The Gorenstein flat dimension of any left $R$-module is at most $n$.
\item[$(5)$] The strongly Gorenstein flat dimension of any left $R$-module is at most $n$.
\item[$(6)$] The projectively coresolved Gorenstein flat dimension of any left $R$-module is at most $n$.
\item[$(i)^{op}$] Opposite side version of $(i)$ $(2\leq i\leq 6)$.
\end{enumerate}
\end{thm}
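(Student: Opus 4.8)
The plan is to reduce each of $(2)$--$(6)$, and (by the left--right symmetry of condition $(1)$) their opposite-side versions, to the numerical statement ``$\id {}_R R\leq n$ and $\id R_R\leq n$''. The implications out of $(1)$ are essentially classical: $(1)\Rightarrow(2)$ is \cite[Theorem 11.5.1]{EJ} and $(1)\Rightarrow(3)$ is its injective counterpart, while $(1)\Rightarrow(4),(5),(6)$ follow from $\Gfd M\leq\Gpd M$ together with the fact that over an Iwanaga--Gorenstein ring the subcategories $\mathscr{GP}(R)$, $\mathscr{GF}(R)$, $\mathscr{SGF}(R)$, $\mathscr{PGF}(R)$ of Gorenstein projective, Gorenstein flat, strongly Gorenstein flat and projectively coresolved Gorenstein flat left $R$-modules all coincide; applying everything to $R^{op}$ gives the versions $(i)^{op}$. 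So the content is entirely in the converses.

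The heart of the proof is $(2)\Rightarrow(1)$, which I would split into two independent halves. \emph{Symmetry half.} Condition $(2)$ in particular makes every finitely generated left $R$-module have finite Gorenstein projective dimension, so by \cite[Theorem 1.4]{HuH} the ring $R$ is Iwanaga--Gorenstein; thus $\id {}_R R$ and $\id R_R$ are both finite, and equal by Zaks' theorem --- call the common value $d$. \emph{Bound half.} For a Gorenstein projective $G$ and a projective $Q$ one has $\Ext^{\geq 1}_R(G,Q)=0$, so for an arbitrary left $R$-module $M$ one may choose (by $(2)$) a Gorenstein projective resolution $0\to G_n\to\cdots\to G_0\to M\to 0$ and apply dimension shifting --- legitimate since $\mathscr{GP}(R)$ is resolving, so that the behaviour of $\Gpd$ in short exact sequences is governed by Theorem \ref{thm-3.2} --- to get $\Ext^{n+1}_R(M,R)=0$; as $M$ is arbitrary, $\id {}_R R\leq n$, i.e.\ $d\leq n$. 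Therefore $\id R_R=d\leq n$ as well, so $R$ is $n$-Gorenstein. Passing to $R^{op}$ gives $(2)^{op}\Rightarrow(1)$.

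The remaining converses I would feed back into $(2)$ or $(2)^{op}$. For $(3)\Rightarrow(1)$: dually to the bound half, $(3)$ forces $\pd({}_R E)\leq n$ for every injective left $R$-module $E$ (using that $\mathscr{GI}(R)$ is coresolving, via the dual Theorem \ref{thm-3.2'}); combining this with the finitistic-dimension identities furnished by Corollaries \ref{cor-3.5} and \ref{cor-3.5'} applied to $(\mathscr{GP}(R),\mathscr{P}(R))$ and $(\mathscr{GI}(R),\mathscr{I}(R))$, and with \cite[Theorem 1.4]{HuH} (which makes $R$ Iwanaga--Gorenstein once Gorenstein injective dimension is finite on finitely generated modules), upgrades the bound to $\id {}_R R\leq n$, whence $(1)$ by Zaks. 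For $(4)\Rightarrow(1)$: condition $(4)$ makes every finitely generated left module have finite Gorenstein flat dimension, which over a Noetherian ring agrees with its Gorenstein projective dimension on finitely generated modules, so \cite[Theorem 1.4]{HuH} makes $R$ Iwanaga--Gorenstein and then $\Gfd M=\Gpd M$ for all $M$, so $(4)$ implies $(2)$ (alternatively, character-module duality over the Noetherian ring $R$ gives $(4)\Rightarrow(3)^{op}$). For $(5)\Rightarrow(1)$ and $(6)\Rightarrow(1)$: the inclusions $\mathscr{P}(R)\subseteq\mathscr{SGF}(R),\mathscr{PGF}(R)\subseteq\mathscr{GP}(R)$ (respectively $\subseteq\mathscr{GF}(R)$) force $\Gpd M$ to be at most the strongly Gorenstein flat, respectively projectively coresolved Gorenstein flat, dimension of $M$, so $(5)$ and $(6)$ each imply $(2)$ (or $(4)$). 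The opposite-side converses follow by symmetry.

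I expect the symmetry half of $(2)\Rightarrow(1)$ to be the main obstacle: extracting the bound $\id {}_R R\leq n$ is a routine dimension shift, but turning a one-sided bound into the two-sided conclusion that $R$ is $n$-Gorenstein genuinely requires the finiteness criterion \cite[Theorem 1.4]{HuH} for finitely generated modules (to obtain Iwanaga--Gorensteinness at all) followed by Zaks' equality of the two self-injective dimensions. The rest is bookkeeping: checking that $\mathscr{GP}(R)$, $\mathscr{GI}(R)$, $\mathscr{GF}(R)$, $\mathscr{SGF}(R)$ and $\mathscr{PGF}(R)$ are resolving or coresolving so that Theorem \ref{thm-3.2} and Corollaries \ref{cor-3.5}, \ref{cor-3.5'} apply, and that the stated comparisons among the five Gorenstein dimensions are valid over a left and right Noetherian ring.
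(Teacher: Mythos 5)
Your reductions among (2)--(6) via the inclusions $\mathcal{PGF}(R)\subseteq\mathcal{SGF}(R)\subseteq\mathcal{GP}(R)$ and $\mathcal{PGF}(R)\subseteq\mathcal{GF}(R)$, and your one-sided bound $\id_RR\leq n$ by dimension shifting, match the paper. The divergence --- and the gap --- is in how you obtain the opposite-side bound $\id_{R^{op}}R\leq n$. The paper's mechanism is Lemma \ref{lem-4.8} and its dual Lemma \ref{lem-4.10}: each class in question is an $\mathscr{E}$-precoresolving subcategory with an $\mathscr{E}$-coproper cogenerator contained in $\mathcal{F}(R)$ (resp.\ preresolving with a proper generator contained in $\mathcal{I}(R)$), so Corollary \ref{cor-3.4} produces, for every finitely generated $M$, an embedding $0\to M\to K'$ with $\fd_RK'\leq n$, and \cite[Lemma 3.8]{HuH} then yields $\id_{R^{op}}R\leq n$ \emph{with the bound $n$}. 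Your route instead goes through \cite[Theorem 1.4]{HuH} (finiteness of Auslander--Bridger G-dimension on $\mod R$ implies $R$ is Gorenstein) plus Zaks. For (2) this requires knowing that a finitely generated module of finite Gorenstein projective dimension has finite G-dimension, i.e.\ that finitely generated Gorenstein projectives are totally reflexive --- a nontrivial point you do not address and which the paper never needs. For (3) the invocation fails outright: \cite[Theorem 1.4]{HuH} says nothing about Gorenstein \emph{injective} dimension of finitely generated modules, and there is no evident bridge from ``every finitely generated left module has finite Gorenstein injective dimension'' to ``every finitely generated left module has finite G-dimension''. The paper handles (3) by applying the hypothesis to the character modules $N^+$ of finitely generated right modules $N$ and feeding the resulting approximation into Lemma \ref{lem-4.10}.

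Two further points. Your assertion that over an Iwanaga--Gorenstein ring $\mathcal{GF}(R)$ coincides with $\mathcal{GP}(R)$ is false ($\mathbb{Q}$ over $\mathbb{Z}$ is Gorenstein flat but not Gorenstein projective); only $\mathcal{GP}(R)=\mathcal{SGF}(R)=\mathcal{PGF}(R)$ holds there, which together with $\Gfd\leq\Gpd$ is all that $(1)\Rightarrow(4),(5),(6)$ actually requires. And your reduction $(4)\Rightarrow(2)$ via ``Gorenstein flat dimension agrees with Gorenstein projective dimension on finitely generated modules over a Noetherian ring'' is unjustified before Gorensteinness is established, while the alternative $(4)\Rightarrow(3)^{op}$ via character duality only controls the Gorenstein injective dimension of right modules of the form $M^+$, not of arbitrary right modules. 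The paper sidesteps both by running Lemma \ref{lem-4.8} directly on $\mathcal{GF}(R)$, which is resolving with $\mathcal{I}(R^{op})^+$-coproper cogenerator $\mathcal{F}(R)\subseteq\mathcal{F}(R)$ by Remark \ref{rem-4.4}(7).
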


The Gorenstein symmetric conjecture states that for any artin algebra $R$, the left self-injective
dimension of $R$ is finite implies that so is its right self-injective dimension (see \cite[p.410]{ARS}).
By Theorem \ref{thm-1.2}, we have that the Gorenstein symmetric conjecture holds true is equivalent to that
for any artin algebra $R$, the left self-injective dimension of $R$ is at most $n$ implies that any of
(2)--(6) (resp. $(2)^{op}$--$(6)^{op}$) is satisfied.

Let $R,S$ be arbitrary rings and $_RC_S$ a semidualizing bimodule, and let $M\in\Mod R$.
We show that $M$ is $C$-flat if and only if its character module is $C$-injective,
and that $M$ is $C$-Gorenstein flat implies that its character module is $C$-Gorenstein injective
(Theorem \ref{thm-4.17}), which are the $C$-versions of \cite[Theorem 2.2]{B2} and \cite[Theorem 3.6]{H}
respectively. As a consequence, we get that the $C$-Gorenstein flat dimension of $M$ is at most its $C$-flat
dimension with equality if the $C$-flat dimension of $M$ is finite; moreover, the finitistic flat
and Gorenstein flat dimensions of $R$ are identical (Theorem \ref{thm-4.19}). It extends
\cite[Theorem 2.1]{F} and \cite[Theorem 3.24]{H}.

\section {\bf Preliminaries}

Throughout this paper, $\mathscr{A}$ is an abelian category and
all subcategories of $\mathscr{A}$ involved are full, additive
and closed under isomorphisms and direct summands.
We use $\mathcal{P}(\mathscr{A})$ (resp. $\mathcal{I}(\mathscr{A})$)
to denote the subcategory of $\mathscr{A}$ consisting of projective (resp. injective) objects.

Let $\mathscr{X}$ be a subcategory of $\mathscr{A}$.
We write
$${^\perp{\mathscr{X}}}:=\{A\in\mathscr{A}\mid\operatorname{Ext}^{\geq 1}_{\mathscr{A}}(A,X)=0 \mbox{ for any}\ X\in \mathscr{X}\},$$
$${{\mathscr{X}}^\perp}:=\{A\in\mathscr{A}\mid\operatorname{Ext}^{\geq 1}_{\mathscr{A}}(X,A)=0 \mbox{ for any}\ X\in \mathscr{X}\}.$$
Let $M\in\mathscr{A}$.
The \textit{$\mathscr{X}$-projective dimension} $\mathscr{X}$-$\pd M$ of $M$ is defined as
$\inf\{n\mid$ there exists an exact sequence
$$0 \to X_n \to \cdots \to X_1\to X_0 \to M\to 0$$
in $\mathscr{A}$ with all $X_i\in\mathscr{X}\}$, and set $\mathscr{X}$-$\pd M=\infty$
if no such integer exists.
Dually, the \textit{$\mathscr{X}$-injective dimension} $\mathscr{X}$-$\id M$ of $M$ is defined as
$\inf\{n\mid$ there exists an exact sequence
$$0 \to M\to X^0\to X^1\to\cdots \to X^n\to 0$$
in $\mathscr{A}$ with all $X^i\in\mathscr{X}\}$, and set $\mathscr{X}$-$\id M=\infty$
if no such integer exists.
We use $\mathscr{X}$-$\pd^{<\infty}$ (resp. $\mathscr{X}$-$\id^{<\infty}$) to denote the subcategory of
$\mathscr{A}$ consisting of objects with finite $\mathscr{X}$-projective (resp. $\mathscr{X}$-injective) dimension.
We write
$$\mathscr{X}\text{-}\FPD:=\sup\{\mathscr{X}\text{-}\pd M\mid M\in\mathscr{X}\text{-}\pd^{<\infty}\},$$
$$\mathscr{X}\text{-}\FID:=\sup\{\mathscr{X}\text{-}\id M\mid M\in\mathscr{X}\text{-}\id^{<\infty}\}.$$

Let $\mathscr{E}$ be a subcategory of $\mathscr{A}$. Recall from \cite{EJ} that a sequence
$$\mathbb{S}: \cdots \to S_1 \to S_2 \to S_3 \to \cdots$$
in $\mathscr{A}$ is called {\it $\Hom_{\mathscr{A}}(\mathscr{E},-)$-exact}
(resp. {\it $\Hom_{\mathscr{A}}(-,\mathscr{E})$-exact})
if {\it $\Hom_{\mathscr{A}}(E,\mathbb{S})$} (resp. $\Hom_{\mathscr{A}}(\mathbb{S},E)$)
is exact for any $E\in\mathscr{E}$.
Let $\mathscr{C}\subseteq\mathscr{T}$ be subcategories of $\mathscr{A}$.
Recall from \cite{Hu2} that $\mathscr{C}$ is called an {\it $\mathscr{E}$-proper generator}
(resp. {\it $\mathscr{E}$-coproper cogenerator}) for $\mathscr{T}$ if for any $T\in\mathscr{T}$, there exists a
$\Hom_{\mathscr{A}}(\mathscr{E},-)$ (resp. $\Hom_{\mathscr{A}}(-,\mathscr{E})$)-exact exact sequence
$$0\to T^{'}\to C \to T \to 0\ {\rm (resp.}\ 0\to T\to C \to T^{'} \to 0)$$ in
$\mathscr{A}$ with $C\in\mathscr{C}$ and $T^{'}\in\mathscr{T}$. When $\mathscr{E}=\mathcal{P}(\mathscr{A})$
(resp. $\mathcal{I}(\mathscr{A})$), an $\mathscr{E}$-proper generator (resp. $\mathscr{E}$-coproper cogenerator)
is exactly a usual generator (resp. cogenerator).

We define $\widetilde{\res_{\mathscr{E}}{\mathscr{C}}}:=\{M\in\mathscr{A}\mid$ there exists a
$\Hom_{\mathscr{A}}(\mathscr{E},-)$-exact exact sequence
$$\cdots \to C_i \to \cdots \to C_1 \to C_0 \to M\to 0$$
in $\mathscr{A}$ with all $C_i\in\mathscr{C}\}$. Dually, we define
$\widetilde{\cores_{\mathscr{E}}{\mathscr{C}}}:=\{M\in\mathscr{A}\mid$ there exists a
$\Hom_{\mathscr{A}}(-,\mathscr{E})$-exact exact sequence
$$0 \to M\to C^0 \to C^1\to \cdots \to C^i \to \cdots$$ in $\mathscr{A}$ with all
$C^i$ in $\mathscr{C}\}$. 

\begin{df}\label{def-2.1} {\rm (\cite{Hu2})
Let $\mathscr{E}$ and $\mathscr{T}$ be subcategories of $\mathscr{A}$.
\begin{enumerate}
\item[(1)] The subcategory $\mathscr{T}$ is called {\it $\mathscr{E}$-preresolving}
in $\mathscr{A}$ if the following conditions are satisfied.
\begin{enumerate}
\item[(1.1)] $\mathscr{T}$ admits an $\mathscr{E}$-proper generator.
\item[(1.2)] $\mathscr{T}$ is {\it closed under $\mathscr{E}$-proper
extensions}, that is, for any
$\Hom_{\mathscr{A}}(\mathscr{E},-)$-exact exact sequence
$$0\to A_1\to A_2 \to A_3 \to 0$$ in $\mathscr{A}$, if both $A_1$ and $A_3$
are in $\mathscr{T}$, then $A_2$ is also in $\mathscr{T}$.
\end{enumerate}
\item[(2)] The subcategory $\mathscr{T}$ is called {\it
$\mathscr{E}$-precoresolving} in $\mathscr{A}$ if the following
conditions are satisfied.
\begin{enumerate}
\item[(2.1)] $\mathscr{T}$ admits an $\mathscr{E}$-coproper cogenerator.
\item[(2.2)] $\mathscr{T}$ is {\it closed under $\mathscr{E}$-coproper
extensions}, that is, for any
$\Hom_{\mathscr{A}}(-,\mathscr{E})$-exact exact sequence
$$0\to A_1\to A_2 \to A_3 \to 0$$ in $\mathscr{A}$, if both $A_1$ and $A_3$
are in $\mathscr{T}$, then $A_2$ is also in $\mathscr{T}$.
\end{enumerate}
\end{enumerate}}
\end{df}

The following definition is cited from \cite{EO}.

\begin{df}\label{def-2.2}
{\rm Let $\mathscr{U},\mathscr{V}$ be subcategories of $\mathscr{A}$.
\begin{enumerate}
\item[(1)] The pair $(\mathscr{U},\mathscr{V})$ is called a {\it cotorsion pair} in $\mathscr{A}$ if
$\mathscr{U}=\{A\in\mathscr{A}\mid\Ext_{\mathscr{A}}^1(A,V)=0$ for any $V\in\mathscr{V}\}$ and
$\mathscr{V}=\{A\in\mathscr{A}\mid\Ext_{\mathscr{A}}^1(U,A)=0$ for any $U\in\mathscr{U}\}$.
\item[(2)] A cotorsion pair $(\mathscr{U},\mathscr{V})$ is called {\it hereditary} if one of the following equivalent
conditions is satisfied.
\begin{enumerate}
\item[(2.1)] $\Ext_{\mathscr{A}}^{\geq 1}(U,V)=0$ for any $U\in\mathscr{U}$ and $V\in\mathscr{V}$.
\item[(2.2)] $\mathscr{U}$ is resolving in the sense that $\mathcal{P}(\mathscr{A})\subseteq\mathscr{U}$
and $\mathscr{U}$ is closed under extensions and kernels of epimorphisms.
\item[(2.3)] $\mathscr{V}$ is coresolving in the sense that $\mathcal{I}(\mathscr{A})\subseteq\mathscr{V}$
and $\mathscr{V}$ is closed under extensions and cokernels of monomorphisms.
\end{enumerate}
\end{enumerate}}
\end{df}

\section {\bf General results}



\subsection {Projective dimension relative to resolving subcategories}

We begin with the following observation.

\begin{lem} \label{lem-3.1}
Let $M\in\mathscr{A}$ and $n\geq 0$.
\begin{enumerate}
\item[$(1)$] Assume that $\mathscr{A}$ has enough projective objects.
If $\mathscr{T}$ is a resolving subcategory of $\mathscr{A}$, then
the following statements are equivalent.
\begin{enumerate}
\item[$(1.1)$] $\mathscr{T}{\text -}\pd M\leq n$.
\item[$(1.2)$] There exists an exact sequence
$$0\to K_n\to P_{n-1}\to\cdots\to P_1\to P_0\to M\to 0$$
in $\mathscr{A}$ with all $P_i$ projective and $K_n\in\mathscr{T}$.
\item[$(1.3)$] For any exact sequence
$$0\to K_n\to P_{n-1}\to\cdots\to P_1\to P_0\to M\to 0$$
in $\mathscr{A}$, if all $P_i$ are projective, then
$K_n\in\mathscr{T}$.
\item[$(1.4)$] For any exact sequence
$$0\to K_n\to T_{n-1}\to\cdots\to T_1\to T_0\to M\to 0$$
in $\mathscr{A}$, if all $T_i$ are in $\mathscr{T}$, then
$K_n\in\mathscr{T}$.
\end{enumerate}
\item[$(2)$] Let $\mathscr{E}$ be a subcategory of $\mathscr{A}$.
If $\mathscr{T}$ is an $\mathscr{E}$-precoresolving subcategory of $\mathscr{A}$
admitting an $\mathscr{E}$-coproper cogenerator $\mathscr{C}$, then
the following statements are equivalent.
\begin{enumerate}
\item[$(2.1)$] $\mathscr{T}{\text -}\pd M\leq n$.
\item[$(2.2)$] There exists an exact sequence
$$0\to C_n\to C_{n-1}\to\cdots\to C_1\to T_0\to M\to 0$$
in $\mathscr{A}$ with all $C_i$ in $\mathscr{C}$ and $T_0\in\mathscr{T}$;
that is, there exists an exact sequence
$$0\to K\to T\to M\to 0$$
in $\mathscr{A}$ with $T\in\mathscr{T}$ and $\mathscr{C}$-$\pd K\leq n-1$.
\end{enumerate}
\end{enumerate}
\end{lem}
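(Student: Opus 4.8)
The plan is to prove the two parts separately, since they rest on different hypotheses.

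\textbf{Part (1).} I would run the cycle $(1.1)\Rightarrow(1.3)\Rightarrow(1.4)\Rightarrow(1.2)\Rightarrow(1.1)$. The implications $(1.4)\Rightarrow(1.2)$ and $(1.2)\Rightarrow(1.1)$ are immediate: since $\mathcal{P}(\mathscr{A})\subseteq\mathscr{T}$, a truncated projective resolution of $M$ — which exists because $\mathscr{A}$ has enough projectives — is a sequence of the type quantified in (1.4), so (1.4) yields (1.2); and the sequence in (1.2) is itself a $\mathscr{T}$-resolution of $M$ of length $\le n$. The pivotal step is $(1.1)\Rightarrow(1.3)$. Fix an exact sequence $0\to K_n\to P_{n-1}\to\cdots\to P_0\to M\to 0$ with the $P_i$ projective and — using $\mathscr{T}\text{-}\pd M\le n$ — an exact sequence $0\to X_n\to\cdots\to X_0\to M\to 0$ with all $X_i\in\mathscr{T}$. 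Regarding these as bounded complexes with homology $M$ concentrated in degree $0$, I would lift $\id_M$ to a chain map $\varphi$ from the first to the second: in degrees $\le n-1$ the components come from the lifting property of the projectives $P_i$ against the exactness of $X_\bullet$, and the degree-$n$ component $K_n\to X_n$ is obtained by restriction, so the non-projectivity of $K_n$ causes no trouble. Since $\varphi$ is a quasi-isomorphism between complexes with homology concentrated in degree $0$, its mapping cone is exact, and unwinding it gives an exact sequence
$$0\to K_n\to P_{n-1}\oplus X_n\to P_{n-2}\oplus X_{n-1}\to\cdots\to P_0\oplus X_1\to X_0\to 0$$
in which every term except possibly $K_n$ lies in $\mathscr{T}$ (here $\mathscr{T}$ is closed under finite direct sums). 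Peeling syzygies off from the right-hand end and using that $\mathscr{T}$ is closed under kernels of epimorphisms then forces $K_n\in\mathscr{T}$.

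$(1.3)\Rightarrow(1.4)$ uses the same device. Given an exact sequence $0\to L_n\to T_{n-1}\to\cdots\to T_0\to M\to 0$ with $T_i\in\mathscr{T}$, pick a truncated projective resolution $0\to K_n\to P_{n-1}\to\cdots\to P_0\to M\to 0$ of $M$; by (1.3), $K_n\in\mathscr{T}$. Lifting $\id_M$ to a chain map from this projective complex to the $T_\bullet$-complex and forming the mapping cone now produces an exact sequence
$$0\to K_n\to P_{n-1}\oplus L_n\to P_{n-2}\oplus T_{n-1}\to\cdots\to P_0\oplus T_1\to T_0\to 0$$
all of whose terms lie in $\mathscr{T}$ except possibly $P_{n-1}\oplus L_n$, while $K_n\in\mathscr{T}$. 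Peeling syzygies off from the right down to $P_{n-1}\oplus L_n$ (kernels of epimorphisms), then invoking closure of $\mathscr{T}$ under extensions, gives $P_{n-1}\oplus L_n\in\mathscr{T}$, whence $L_n\in\mathscr{T}$ by closure under direct summands.

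\textbf{Part (2).} The two formulations inside (2.2) are interchangeable by a routine splice: putting $K=\ker(T_0\to M)$ in the long sequence exhibits $0\to K\to T_0\to M\to 0$ with $\mathscr{C}\text{-}\pd K\le n-1$, and conversely one splices a $\mathscr{C}$-resolution of $K$ of length $\le n-1$ onto $0\to K\to T\to M\to 0$; the same splice gives $(2.2)\Rightarrow(2.1)$ because $\mathscr{C}\subseteq\mathscr{T}$, so that long sequence is a $\mathscr{T}$-resolution of $M$ of length $\le n$. For $(2.1)\Rightarrow(2.2)$ I would induct on $n$; the case $n=0$ is trivial ($M\in\mathscr{T}$, take $T=M$, $K=0$). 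For $n\ge 1$, extract from a $\mathscr{T}$-resolution of $M$ a short exact sequence $0\to N\to X_0\to M\to 0$ with $X_0\in\mathscr{T}$ and $\mathscr{T}\text{-}\pd N\le n-1$, and apply the inductive hypothesis to $N$ to obtain $0\to L\to S\to N\to 0$ with $S\in\mathscr{T}$ and $\mathscr{C}\text{-}\pd L\le n-2$. Now use that $\mathscr{C}$ is an $\mathscr{E}$-coproper cogenerator of $\mathscr{T}$: there is a $\Hom_{\mathscr{A}}(-,\mathscr{E})$-exact exact sequence $0\to S\to C\to S'\to 0$ with $C\in\mathscr{C}$ and $S'\in\mathscr{T}$. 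Pushing it out along the epimorphism $S\twoheadrightarrow S/L=N$ yields an exact sequence $0\to N\to C/L\to S'\to 0$ which is still $\Hom_{\mathscr{A}}(-,\mathscr{E})$-exact (this uses that every composite $L\hookrightarrow S\to S/L\to E$ vanishes). Then form the pushout $Z$ of the monomorphisms $N\hookrightarrow C/L$ and $N\hookrightarrow X_0$: this produces $0\to C/L\to Z\to M\to 0$ together with $0\to X_0\to Z\to S'\to 0$, and the latter, being a pushout of the $\Hom_{\mathscr{A}}(-,\mathscr{E})$-exact sequence $0\to N\to C/L\to S'\to 0$, is again $\Hom_{\mathscr{A}}(-,\mathscr{E})$-exact; since $\mathscr{T}$ is closed under $\mathscr{E}$-coproper extensions and $X_0,S'\in\mathscr{T}$, we conclude $Z\in\mathscr{T}$. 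Finally, splicing a $\mathscr{C}$-resolution of $L$ of length $\le n-2$ onto $0\to L\to C\to C/L\to 0$ shows $\mathscr{C}\text{-}\pd(C/L)\le n-1$, so $0\to C/L\to Z\to M\to 0$ is the sequence required by (2.2).

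\textbf{Where the difficulty lies.} In Part (1) the delicate part is the precise mechanics of the two mapping-cone arguments: identifying the cone with the displayed long exact sequence, checking that the chain map $\varphi$ can be defined down to degree $n$, and applying the correct closure property at each end (kernels of epimorphisms on the right, extensions on the left). In Part (2) the crux is the bookkeeping that $\Hom_{\mathscr{A}}(-,\mathscr{E})$-exactness is preserved, first under the pushout along the epimorphism $S\twoheadrightarrow N$ and then under the pushout along the monomorphism $N\hookrightarrow X_0$ — this is exactly what brings $Z$ back into $\mathscr{T}$ and makes the induction close.
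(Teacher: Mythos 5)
Your proof is correct. Note, though, that the paper does not actually write out an argument for this lemma: part (1) is handled by citing \cite[Theorem 3.6]{Hu2} for $(1.1)\Leftrightarrow(1.2)$ and \cite[Lemma 3.12]{AB} for $(1.1)\Rightarrow(1.4)$ (with $(1.4)\Rightarrow(1.3)\Rightarrow(1.2)$ declared trivial), and part (2) is delegated entirely to \cite[Theorem 4.7]{Hu2}. What you have produced is a self-contained version of exactly those arguments: the comparison-map/mapping-cone computation followed by peeling syzygies off the acyclic cone is the classical Auslander--Bridger proof of syzygy-independence for resolving subcategories (your observation that the degree-$n$ component $K_n\to X_n$ comes for free by restriction, so that only the terms in degrees $\le n-1$ need to be projective, is the right way to handle $(1.1)\Rightarrow(1.3)$), and your inductive pushout construction in part (2), with the two checks that $\Hom_{\mathscr{A}}(-,\mathscr{E})$-exactness survives first the quotient by $L$ and then the pushout along $N\hookrightarrow X_0$, is the same mechanism the paper uses elsewhere (e.g.\ in Propositions \ref{prop-3.6} and \ref{prop-3.7}, via \cite[Lemma 2.4(2)]{Hu1}). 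The only cosmetic difference is the orientation of the cycle in part (1) ($(1.1)\Rightarrow(1.3)\Rightarrow(1.4)\Rightarrow(1.2)\Rightarrow(1.1)$ versus the paper's arrangement), which changes nothing of substance.
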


\begin{proof}
(1) The implications $(1.4)\Rightarrow (1.3)\Rightarrow (1.2)$ are trivial.
By \cite[Theorem 3.6]{Hu2}, we have $(1.1)\Leftrightarrow (1.2)$.
By \cite[Lemma 3.12]{AB}, we have $(1.1)\Rightarrow (1.4)$.

(2) It follows from \cite[Theorem 4.7]{Hu2}.
\end{proof}


The main result in this subsection is as follows.

\begin{thm} \label{thm-3.2}
Assume that $\mathscr{A}$ has enough projective objects and
$\mathscr{T}$ is a subcategory of $\mathscr{A}$ containing $\mathcal{P}(\mathscr{A})$.
Then the following statements are equivalent.
\begin{enumerate}
\item[$(1)$] $\mathscr{T}$ is resolving.
\item[$(2)$] For any exact sequence
$$0 \to A_1 \to A_2 \to A_3 \to 0$$
in $\mathscr{A}$, we have
\begin{enumerate}
\item[$(i)$] $(a)$ $\mathscr{T}{\text -}\pd A_2\leq
\max\{\mathscr{T}{\text -}\pd A_1,\mathscr{T}{\text -}\pd A_3\}$,
$(b)$ the equality holds if
$\mathscr{T}{\text -}\pd A_1+1\neq\mathscr{T}{\text -}\pd A_3$.
\item[$(ii)$] $(a)$ $\mathscr{T}{\text -}\pd A_1\leq
\max\{\mathscr{T}{\text -}\pd A_2,\mathscr{T}{\text -}\pd A_3-1\}$,
$(b)$ the equality holds if
$\mathscr{T}{\text -}\pd A_2\neq\mathscr{T}{\text -}\pd A_3$.
\item[$(iii)$] $(a)$ $\mathscr{T}{\text -}\pd A_3\leq
\max\{\mathscr{T}{\text -}\pd A_1+1,\mathscr{T}{\text -}\pd A_2\}$,
$(b)$ the equality holds if
$\mathscr{T}{\text -}\pd A_1\neq\mathscr{T}{\text -}\pd A_2$.
\end{enumerate}
\end{enumerate}
\end{thm}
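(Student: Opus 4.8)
The plan is to prove the two implications separately. The reverse implication $(2)\Rightarrow(1)$ is essentially a formal consequence of the $n=0$ instances of the inequalities, while all the homological content sits in $(1)\Rightarrow(2)$.

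For $(2)\Rightarrow(1)$: since every subcategory considered here is closed under isomorphisms and direct summands, an object $M$ of $\mathscr{A}$ lies in $\mathscr{T}$ if and only if $\mathscr{T}{\text -}\pd M\le 0$. As $\mathcal{P}(\mathscr{A})\subseteq\mathscr{T}$ by hypothesis, it remains only to verify that $\mathscr{T}$ is closed under extensions and under kernels of epimorphisms. Given an exact sequence $0\to A_1\to A_2\to A_3\to 0$ with $A_1,A_3\in\mathscr{T}$, clause $(i)(a)$ gives $\mathscr{T}{\text -}\pd A_2\le\max\{0,0\}=0$, so $A_2\in\mathscr{T}$; given such a sequence with $A_2,A_3\in\mathscr{T}$, clause $(ii)(a)$ gives $\mathscr{T}{\text -}\pd A_1\le\max\{0,-1\}=0$, so $A_1\in\mathscr{T}$. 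Hence $\mathscr{T}$ is resolving. (Only $(i)(a)$ and $(ii)(a)$ are needed for this direction.)

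For $(1)\Rightarrow(2)$: I would first record the standard toolkit for a resolving subcategory. By Lemma~\ref{lem-3.1}(1), for every $M\in\mathscr{A}$ and $n\ge 0$ one has $\mathscr{T}{\text -}\pd M\le n$ if and only if some, equivalently every, $n$-th syzygy $\Omega^nM$ of $M$ taken from a projective resolution lies in $\mathscr{T}$ (with the convention $\Omega^0M=M$). Moreover, since $\mathscr{T}$ contains all projectives and is closed under kernels of epimorphisms, $T\in\mathscr{T}$ forces $\Omega T\in\mathscr{T}$, whence $\mathscr{T}{\text -}\pd\Omega M\le\max\{\mathscr{T}{\text -}\pd M-1,0\}$ (the usual syzygy shift). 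Next I would prove the three plain inequalities $(i)(a)$, $(ii)(a)$, $(iii)(a)$ for an arbitrary exact sequence $0\to X\to Y\to Z\to 0$, the cases in which the relevant right-hand side is $\infty$ being vacuous. The Horseshoe Lemma, applied to projective resolutions of $X$ and $Z$, produces for each $k\ge 0$ an exact sequence $0\to\Omega^kX\to\Omega^kY\to\Omega^kZ\to 0$ of $k$-th syzygies (for $k=0$ this is the original sequence). Taking $k=\max\{\mathscr{T}{\text -}\pd X,\mathscr{T}{\text -}\pd Z\}$ and using that $\mathscr{T}$ is closed under extensions gives $(i)(a)$. Using the syzygy shift, clauses $(ii)(a)$ and $(iii)(a)$ reduce respectively to the base statements ``$Y\in\mathscr{T}$ and $\mathscr{T}{\text -}\pd Z\le 1$ imply $X\in\mathscr{T}$'' and ``$X\in\mathscr{T}$ and $\mathscr{T}{\text -}\pd Y\le 1$ imply $\mathscr{T}{\text -}\pd Z\le 1$''. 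For the former, choose an exact sequence $0\to T_1\to T_0\to Z\to 0$ with $T_0,T_1\in\mathscr{T}$ and form the pullback $W$ of $Y\twoheadrightarrow Z$ along $T_0\to Z$; then $0\to T_1\to W\to Y\to 0$ places $W$ in $\mathscr{T}$ (closure under extensions), after which $0\to X\to W\to T_0\to 0$ places $X$ in $\mathscr{T}$ (closure under kernels of epimorphisms). For the latter, choose $0\to T_1\to T_0\to Y\to 0$ with $T_i\in\mathscr{T}$; the kernel $K$ of the composite epimorphism $T_0\twoheadrightarrow Y\twoheadrightarrow Z$ sits in $0\to T_1\to K\to X\to 0$, so $K\in\mathscr{T}$, and then $0\to K\to T_0\to Z\to 0$ witnesses $\mathscr{T}{\text -}\pd Z\le 1$.

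Finally I would deduce the conditional equality clauses $(i)(b)$, $(ii)(b)$, $(iii)(b)$ from the three plain inequalities by an elementary case analysis in $\{-\infty\}\cup\mathbb{Z}_{\ge 0}\cup\{\infty\}$: for instance, if $\mathscr{T}{\text -}\pd A_2<\max\{\mathscr{T}{\text -}\pd A_1,\mathscr{T}{\text -}\pd A_3\}$, then substituting this into $(ii)(a)$ (when the maximum is attained at $A_1$) or into $(iii)(a)$ (when it is attained at $A_3$) forces $\mathscr{T}{\text -}\pd A_1+1=\mathscr{T}{\text -}\pd A_3$, against the hypothesis of $(i)(b)$; the arguments for $(ii)(b)$ and $(iii)(b)$ are symmetric. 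I expect the main obstacle to be the plain inequalities $(ii)(a)$ and $(iii)(a)$: identifying the correct number of syzygies to strip off so as to land on the length-one base cases, and checking that the pullback and composite-kernel constructions remain inside $\mathscr{T}$ using exactly the two defining closure properties of a resolving subcategory. The reverse implication and the order-theoretic extraction of the equality clauses are then routine.
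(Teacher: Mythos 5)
Your proof is correct, and its overall architecture coincides with the paper's: the implication $(2)\Rightarrow(1)$ is the same formal observation from the $n=0$ instances of $(i)(a)$ and $(ii)(a)$; clause $(i)(a)$ is proved by the same horseshoe-lemma argument on simultaneous syzygies together with closure under extensions; and the equality clauses $(b)$ are extracted from the three inequalities by the same order-theoretic case analysis. The genuine divergence is in $(ii)(a)$ and $(iii)(a)$. The paper dimension-shifts only one term at a time: it takes $0\to A_2'\to P\to A_2\to 0$ with $P$ projective and forms a pullback, and in the case $\mathscr{T}{\text -}\pd A_3\geq 1$ of $(ii)(a)$ it pulls back along a projective cover of $A_3$ to obtain $0\to A_3'\to A_1\oplus Q\to A_2\to 0$, applies the already-established $(i)(a)$, and then invokes the external fact that finite $\mathscr{T}$-projective dimension passes to direct summands (\cite[Corollary 3.9]{Hu2}). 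You instead strip syzygies simultaneously from all three terms (again via the horseshoe lemma, justified by Lemma \ref{lem-3.1}(1.3)) so as to land on uniform length-one base cases, which you settle with a single pullback (for $(ii)(a)$) or composite-kernel (for $(iii)(a)$) construction using only the two defining closure properties of a resolving subcategory. Your route avoids the appeal to the direct-summand property and treats $(ii)$ and $(iii)$ symmetrically, at the cost of some syzygy bookkeeping; the paper's route gets $(ii)(a)$ and $(iii)(a)$ in one step each by feeding a pullback column back into $(i)(a)$. The diagrams in your base cases are essentially the ones the paper draws, just deployed after rather than before the dimension shift, so both arguments are sound.
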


\begin{proof}
$(2)\Rightarrow (1)$ By $(i)(a)$ and $(ii)(a)$, we have that $\mathscr{T}$
is closed extensions and kernels of epimorphisms respectively, and so $\mathscr{T}$ is resolving.

$(1)\Rightarrow (2)$ $(i)(a)$ If $\max\{\mathscr{T}{\text -}\pd A_1,\mathscr{T}{\text -}\pd A_3\}=0$,
that is, both $A_1$ and $A_3$ are in $\mathscr{T}$, then $A_2$ is also in $\mathscr{T}$ by (1),
and the assertion follows. Now suppose $\max\{\mathscr{T}{\text -}\pd A_1,\mathscr{T}{\text -}\pd A_3\}=n\geq 1$.
By Lemma \ref{lem-3.1}(1), we have the following two exact sequences
$$0\to K^{'}_n\to P^{'}_{n-1}\to\cdots P^{'}_{1}\to P^{'}_{0}\to A_1\to 0,$$
$$0\to K^{''}_n\to P^{''}_{n-1}\to\cdots P^{''}_{1}\to P^{''}_{0}\to A_3\to 0$$
in $\mathscr{A}$ with all $P^{'}_i,P^{''}_i$ projective and $K^{'}_n,K^{''}_n\in\mathscr{T}$.
Then by the horseshoe lemma, we get the following two exact sequences
$$0\to K_n\to P^{'}_{n-1}\oplus P^{''}_{n-1}\to\cdots P^{'}_{1}\oplus P^{''}_{1}
\to P^{'}_{0}\oplus P^{''}_{0}\to A_2\to 0,\eqno{(3.1)}$$
$$0\to K^{'}_n\to K_n\to K^{''}_n\to 0. \eqno{(3.2)}$$
By the exact sequence (3.2) and (1), we have $K_n\in\mathscr{T}$.
Then the exact sequence (3.1) implies $\mathscr{T}{\text -}\pd A_2\leq n$.

$(ii)(a)$
Let $\mathscr{T}{\text -}\pd A_2=n_2$ and $\mathscr{T}{\text -}\pd A_3=n_3$ with $n_2,n_3<\infty$.

We first suppose $n_3=0$ (that is, $A_3\in\mathscr{T}$). If $n_2=0$ (that is, $A_2\in\mathscr{T}$),
then $A_1\in\mathscr{T}$ by (1). If $n_2\geq 1$, then by Lemma \ref{lem-3.1}(1), there exists an exact sequence
$$0\to A^{'}_2\to P\to A_2\to 0$$
in $\mathscr{A}$ with $P$ projective and $\mathscr{T}{\text -}\pd A^{'}_2\leq n_2-1$.
Consider the following pull-back diagram
$$\xymatrix{ & 0 \ar[d]& 0 \ar[d] & \\
& A^{'}_2 \ar@{-->}[d] \ar@{==}[r]& A^{'}_2 \ar[d] & \\
0 \ar@{-->}[r] & T \ar@{-->}[r] \ar@{-->}[d]& P \ar@{-->}[r] \ar[d] & A_3 \ar@{-->}[r] \ar@{==}[d]& 0\\
0 \ar[r] &  A_1 \ar[r] \ar@{-->}[d]& A_2 \ar[r] \ar[d] & A_3 \ar[r] &0\\
& 0 & 0. &}$$
By (1) and middle row in the above diagram, we have $T\in\mathscr{T}$.
Then the leftmost column in this diagram implies $\mathscr{T}{\text -}\pd A_1\leq n_2$.

Now suppose $n_3\geq 1$. Then by Lemma \ref{lem-3.1}(1), there exists an exact sequence
$$0\to A^{'}_3\to Q\to A_3\to 0$$
in $\mathscr{A}$ with $Q$ projective and $\mathscr{T}{\text -}\pd A^{'}_3\leq n_3-1$.
Consider the following pull-back diagram
$$\xymatrix{& & 0 \ar@{-->}[d] & 0 \ar[d]& &\\
& & A^{'}_3 \ar@{==}[r] \ar@{-->}[d] & A^{'}_3 \ar[d]& &\\
0 \ar@{-->}[r] & A_1 \ar@{==}[d] \ar@{-->}[r] & A_1\oplus Q \ar@{-->}[d]
\ar@{-->}[r] & Q\ar[d] \ar@{-->}[r] & 0\\
0 \ar[r] & A_1 \ar[r] & A_2 \ar@{-->}[d] \ar[r] & A_3 \ar[d] \ar[r] & 0\\
& & 0 & 0. & & }$$
By $(i)(a)$ and middle column in the above diagram, we have
$\mathscr{T}{\text -}\pd (A_1\oplus Q)\leq\max\{n_2,n_3-1\}$. It follows
from \cite[Corollary 3.9]{Hu2} that $\mathscr{T}{\text -}\pd A_1\leq\max\{n_2,n_3-1\}$.


$(iii)(a)$ Let $\mathscr{T}{\text -}\pd A_1=n_1$ and $\mathscr{T}{\text -}\pd A_2=n_2$ with $n_1,n_2<\infty$.
If $n_2=0$, that is, $A_2\in\mathscr{T}$, then $\mathscr{T}{\text -}\pd A_3=n_1+1$. Now suppose
$n_2\geq 1$. Then by Lemma \ref{lem-3.1}(1), there exists an exact sequence
$$0\to A^{'}_2\to P\to A_2\to 0$$
in $\mathscr{A}$ with $P$ projective and $\mathscr{T}{\text -}\pd A^{'}_2\leq n_2-1$.
Consider the following pull-back diagram
$$\xymatrix{ & 0 \ar[d]& 0 \ar[d] & \\
& A^{'}_2 \ar@{-->}[d] \ar@{==}[r]& A^{'}_2 \ar[d] & \\
0 \ar@{-->}[r] & K \ar@{-->}[r] \ar@{-->}[d]& P \ar@{-->}[r] \ar[d] & A_3 \ar@{-->}[r] \ar@{==}[d]& 0\\
0 \ar[r] &  A_1 \ar[r] \ar@{-->}[d]& A_2 \ar[r] \ar[d] & A_3 \ar[r] &0\\
& 0 & 0. &}$$
By (1) and the leftmost column in the above diagram, we have
$$\mathscr{T}{\text -}\pd K\leq
\max\{\mathscr{T}{\text -}\pd A_1,\mathscr{T}{\text -}\pd A^{'}_2\}\leq\max\{n_1,n_2-1\}.$$
Then the middle row in this diagram implies
$$\mathscr{T}{\text -}\pd A_3\leq
\mathscr{T}{\text -}\pd K+1\leq\max\{n_1,n_2-1\}+1=\max\{n_1+1,n_2\}.$$


$(i)(b)$ If $\mathscr{T}{\text -}\pd A_1+1<\mathscr{T}{\text -}\pd A_3$, then
$\mathscr{T}{\text -}\pd A_2\leq\mathscr{T}{\text -}\pd A_3$ and
$\mathscr{T}{\text -}\pd A_3\leq\mathscr{T}{\text -}\pd A_2$
by $(i)(a)$ and $(iii)(a)$ respectively.
Thus $\mathscr{T}{\text -}\pd A_2=\mathscr{T}{\text -}\pd A_3$.

If $\mathscr{T}{\text -}\pd A_3<\mathscr{T}{\text -}\pd A_1+1$, then
$\mathscr{T}{\text -}\pd A_2\leq\mathscr{T}{\text -}\pd A_1$ and
$\mathscr{T}{\text -}\pd A_1\leq\mathscr{T}{\text -}\pd A_2$ by $(i)(a)$
and $(ii)(a)$ respectively.
Thus $\mathscr{T}{\text -}\pd A_2=\mathscr{T}{\text -}\pd A_1$.

$(ii)(b)$ If $\mathscr{T}{\text -}\pd A_2<\mathscr{T}{\text -}\pd A_3$,
then $\mathscr{T}{\text -}\pd A_1\leq\mathscr{T}{\text -}\pd A_3-1$ and
$\mathscr{T}{\text -}\pd A_3\leq\mathscr{T}{\text -}\pd A_1+1$
by $(ii)(a)$ and $(iii)(a)$ respectively.
Thus $\mathscr{T}{\text -}\pd A_1=\mathscr{T}{\text -}\pd A_3-1$.

If $\mathscr{T}{\text -}\pd A_3<\mathscr{T}{\text -}\pd A_2$, then
$\mathscr{T}{\text -}\pd A_1\leq\mathscr{T}{\text -}\pd A_2$ and
$\mathscr{T}{\text -}\pd A_2\leq\mathscr{T}{\text -}\pd A_1$
by $(ii)(a)$ and $(i)(a)$ respectively.
Thus $\mathscr{T}{\text -}\pd A_1=\mathscr{T}{\text -}\pd A_2$.

$(iii)(b)$ If $\mathscr{T}{\text -}\pd A_1<\mathscr{T}{\text -}\pd A_2$,
then $\mathscr{T}{\text -}\pd A_3\leq\mathscr{T}{\text -}\pd A_2$ and
$\mathscr{T}{\text -}\pd A_2\leq\mathscr{T}{\text -}\pd A_3$
by $(iii)(a)$ and $(i)(a)$ respectively.
Thus $\mathscr{T}{\text -}\pd A_3=\mathscr{T}{\text -}\pd A_2$.

If $\mathscr{T}{\text -}\pd A_2<\mathscr{T}{\text -}\pd A_1$, then
$\mathscr{T}{\text -}\pd A_3\leq\mathscr{T}{\text -}\pd A_1+1$ and
$\mathscr{T}{\text -}\pd A_1\leq\mathscr{T}{\text -}\pd A_3-1$
by $(iii)(a)$ and $(ii)(a)$ respectively.
Thus $\mathscr{T}{\text -}\pd A_3=\mathscr{T}{\text -}\pd A_1+1$.
\end{proof}

As an immediate consequence, we get the following result.

\begin{cor} \label{cor-3.3}
Assume that $\mathscr{A}$ has enough projective objects and
$\mathscr{T}$ is a resolving subcategory of $\mathscr{A}$. Then
$\mathscr{T}$-$\pd^{<\infty}$ satisfies the two-out-of-three property; that is,
in a short exact sequence in $\mathscr{A}$,
if any two terms are in $\mathscr{T}$-$\pd^{<\infty}$, then so is the third term.
\end{cor}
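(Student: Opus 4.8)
The plan is to deduce the statement immediately from Theorem~\ref{thm-3.2}. Note first that since $\mathscr{T}$ is resolving it contains $\mathcal{P}(\mathscr{A})$, so the hypotheses of Theorem~\ref{thm-3.2} are met and all three comparison inequalities $(i)(a)$, $(ii)(a)$, $(iii)(a)$ are available. Let
$$0 \to A_1 \to A_2 \to A_3 \to 0$$
be an exact sequence in $\mathscr{A}$ in which two of the terms lie in $\mathscr{T}{\text -}\pd^{<\infty}$; I would split into three cases according to which term is not assumed to have finite $\mathscr{T}$-projective dimension.

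If $\mathscr{T}{\text -}\pd A_1 < \infty$ and $\mathscr{T}{\text -}\pd A_3 < \infty$, then $(i)(a)$ yields $\mathscr{T}{\text -}\pd A_2 \leq \max\{\mathscr{T}{\text -}\pd A_1, \mathscr{T}{\text -}\pd A_3\} < \infty$. If $\mathscr{T}{\text -}\pd A_2 < \infty$ and $\mathscr{T}{\text -}\pd A_3 < \infty$, then $(ii)(a)$ yields $\mathscr{T}{\text -}\pd A_1 \leq \max\{\mathscr{T}{\text -}\pd A_2, \mathscr{T}{\text -}\pd A_3 - 1\} < \infty$. Finally, if $\mathscr{T}{\text -}\pd A_1 < \infty$ and $\mathscr{T}{\text -}\pd A_2 < \infty$, then $(iii)(a)$ yields $\mathscr{T}{\text -}\pd A_3 \leq \max\{\mathscr{T}{\text -}\pd A_1 + 1, \mathscr{T}{\text -}\pd A_2\} < \infty$. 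In each case the remaining term lies in $\mathscr{T}{\text -}\pd^{<\infty}$, which is precisely the two-out-of-three property.

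There is essentially no obstacle here: the corollary is just a repackaging of the plain-inequality parts of Theorem~\ref{thm-3.2}, and the equality clauses play no role. The only points worth a word are the observation that ``resolving'' supplies the containment $\mathcal{P}(\mathscr{A}) \subseteq \mathscr{T}$ needed to invoke the theorem, and the bookkeeping that each of the three omitted-term cases is governed by exactly one of the three inequalities. Alternatively one could re-derive each bound from the horseshoe lemma and the pullback diagrams used in the proof of Theorem~\ref{thm-3.2}, but citing the theorem is cleaner and avoids repetition.
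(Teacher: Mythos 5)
Your proof is correct and follows exactly the route the paper intends: the corollary is stated there as an immediate consequence of Theorem~\ref{thm-3.2}, and your case analysis using the inequalities $(i)(a)$, $(ii)(a)$, $(iii)(a)$ (together with the observation that resolving implies $\mathcal{P}(\mathscr{A})\subseteq\mathscr{T}$) is precisely the intended argument.
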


The following result shows that if the resolving subcategory $\mathscr{T}$
of $\mathscr{A}$ admits an $\mathscr{E}$-coproper
cogenerator $\mathscr{C}$, then any object in $\mathscr{A}$ with finite
$\mathscr{T}$-projective dimension is isomorphic to a kernel (respectively, a cokernel)
of a morphism from an object  in $\mathscr{A}$
with finite $\mathscr{C}$-projective dimension to an object in $\mathscr{T}$.

\begin{cor}\label{cor-3.4}
Let $\mathscr{E}$ be a subcategory of $\mathscr{A}$. If $\mathscr{T}$ is an $\mathscr{E}$-precoresolving
subcategory of $\mathscr{A}$ admitting an $\mathscr{E}$-coproper cogenerator $\mathscr{C}$,
then for any $M\in\mathscr{A}$ with $\mathscr{T}{\text -}\pd M=n<\infty$, we have
\begin{enumerate}
\item[$(1)$] There exists an exact sequence
$$0\to K\to T\to K^{'}\to T^{'}\to 0$$
in $\mathscr{A}$ with $\mathscr{C}{\text -}\pd K\leq n-1$, $\mathscr{C}{\text -}\pd K^{'}\leq n$
and $T,T^{'}\in\mathscr{T}$, such that $M\cong\Im(T\to K^{'})$.
\item[$(2)$] If $\mathscr{A}$ has enough projective objects and
$\mathscr{T}$ is resolving in $\mathscr{A}$, then the two ``$\leq$" in $(1)$ are ``$=$".
\end{enumerate}
\end{cor}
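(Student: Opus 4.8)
The plan is to establish $(1)$ by producing two short exact sequences $0\to K\to T\to M\to 0$ and $0\to M\to K'\to T'\to 0$ with the stated properties and then splicing them. For the first one I would invoke the equivalence in Lemma \ref{lem-3.1}$(2)$: as $\mathscr{T}$ is $\mathscr{E}$-precoresolving admitting the $\mathscr{E}$-coproper cogenerator $\mathscr{C}$ and $\mathscr{T}{\text -}\pd M=n<\infty$, part $(2.2)$ there yields an exact sequence $0\to K\to T\to M\to 0$ with $T\in\mathscr{T}$ and $\mathscr{C}{\text -}\pd K\leq n-1$. For the second one I would apply the defining property of the $\mathscr{E}$-coproper cogenerator $\mathscr{C}$ to $T$, obtaining an exact sequence $0\to T\to C\to T'\to 0$ with $C\in\mathscr{C}$ and $T'\in\mathscr{T}$. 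Composing the monomorphisms $K\hookrightarrow T\hookrightarrow C$ and setting $K':=\Coker(K\to C)$ produces a commutative diagram with exact rows
$$\xymatrix{0\ar[r] & K\ar[r]\ar@{=}[d] & T\ar[r]\ar[d] & M\ar[r]\ar[d] & 0\\ 0\ar[r] & K\ar[r] & C\ar[r] & K'\ar[r] & 0,}$$
and the snake lemma applied to it shows that the induced map $M\to K'$ is a monomorphism whose cokernel is $\Coker(T\to C)\cong T'$; this is the desired sequence $0\to M\to K'\to T'\to 0$. From the bottom row $0\to K\to C\to K'\to 0$ with $C\in\mathscr{C}$, splicing a $\mathscr{C}$-resolution of $K$ of length $\leq n-1$ onto it gives a $\mathscr{C}$-resolution of $K'$ of length $\leq n$, so $\mathscr{C}{\text -}\pd K'\leq n$. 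Finally, splicing $0\to K\to T\to M\to 0$ with $0\to M\to K'\to T'\to 0$ yields the four-term exact sequence of $(1)$, in which $M\cong\Im(T\to K')$ since $T$ surjects onto $M$ and $M$ injects into $K'$.

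For $(2)$ I would first observe that $\mathscr{C}\subseteq\mathscr{T}$, so every $\mathscr{C}$-resolution is a $\mathscr{T}$-resolution and hence $\mathscr{T}{\text -}\pd X\leq\mathscr{C}{\text -}\pd X$ for all $X\in\mathscr{A}$; it thus suffices to prove $\mathscr{T}{\text -}\pd K=n-1$ and $\mathscr{T}{\text -}\pd K'=n$. Since $\mathscr{A}$ now has enough projective objects and $\mathscr{T}$ is resolving, Theorem \ref{thm-3.2} applies. Applying its part $(ii)$ to $0\to K\to T\to M\to 0$ (where $\mathscr{T}{\text -}\pd T=0$, $\mathscr{T}{\text -}\pd M=n$) gives $\mathscr{T}{\text -}\pd K\leq\max\{0,n-1\}=n-1$, with equality because $0=\mathscr{T}{\text -}\pd T\neq\mathscr{T}{\text -}\pd M=n$ (this for $n\geq 1$; when $n=0$ one has $M\in\mathscr{T}$ and may take $K=0$, and there is nothing to show). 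Applying part $(i)$ to $0\to M\to K'\to T'\to 0$ (where $\mathscr{T}{\text -}\pd M=n$, $\mathscr{T}{\text -}\pd T'=0$) gives $\mathscr{T}{\text -}\pd K'\leq\max\{n,0\}=n$, with equality because $\mathscr{T}{\text -}\pd M+1=n+1\neq 0=\mathscr{T}{\text -}\pd T'$. Combining these with $\mathscr{T}{\text -}\pd(-)\leq\mathscr{C}{\text -}\pd(-)$ and the bounds $\mathscr{C}{\text -}\pd K\leq n-1$ and $\mathscr{C}{\text -}\pd K'\leq n$ from $(1)$ forces both to be equalities.

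The one step that is not purely formal is the snake-lemma computation in $(1)$ identifying $\Coker(M\to K')$ with $T'$ and checking that $M\to K'$ is monic; this, together with the bookkeeping in the degenerate case $n=0$ and the convention for the relative projective dimension of the zero object, is where care is needed. Everything else is a direct application of Lemma \ref{lem-3.1} and Theorem \ref{thm-3.2} together with standard splicing of resolutions.
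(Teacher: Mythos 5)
Your proposal is correct and follows essentially the same route as the paper: Lemma \ref{lem-3.1}(2) to produce $0\to K\to T\to M\to 0$, the $\mathscr{E}$-coproper cogenerator applied to $T$ followed by a push-out (your snake-lemma computation is just an explicit rendering of that push-out square) to produce $0\to M\to K'\to T'\to 0$, and then Theorem \ref{thm-3.2}(2)$(ii)$ and $(i)$ combined with $\mathscr{T}{\text -}\pd\leq\mathscr{C}{\text -}\pd$ for part $(2)$. No gaps; your explicit handling of the $n=0$ case is a minor point the paper dismisses as trivial.
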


\begin{proof}
(1) Let $M\in\mathscr{A}$ with $\mathscr{T}{\text -}\pd M=n<\infty$. The case for $n=0$ is trivial.
Now suppose $n\geq 1$. By Lemma \ref{lem-3.1}(2), there exists an exact sequence
$$0\to K\to T\to M\to 0\eqno{(3.3)}$$
in $\mathscr{A}$ with $\mathscr{C}{\text -}\pd K\leq n-1$ and $T\in\mathscr{T}$.
Thus there exists an exact sequence
$$0\to T\to C\to T^{'}\to 0$$
in $\mathscr{A}$ with $C\in\mathscr{C}$ and $T^{'}\in\mathscr{T}$.
Consider the following push-out diagram
$$\xymatrix{&  &  0 \ar[d] & 0 \ar[d] & \\
0\ar[r] & K\ar[r] \ar@{==}[d]& T \ar[r] \ar[d]& M \ar[r] \ar@{-->}[d]& 0\\
0\ar@{-->}[r] & K\ar@{-->}[r] & C \ar@{-->}[r] \ar[d]& K^{'} \ar@{-->}[r] \ar@{-->}[d]& 0\\
& & T^{'}\ar@{==}[r] \ar[d]&T^{'} \ar@{-->}[d]\\
& & 0 & 0.& }$$
By the middle row in the above diagram, we have $\mathscr{C}{\text -}\pd K^{'}\leq n$.
Now splicing (3.3) and the rightmost column
$$0\to M\to K^{'}\to T^{'}\to 0,\eqno{(3.4)}$$
we get the desired exact sequence.

(2) Assume that $\mathscr{A}$ has enough projective objects and
$\mathscr{T}$ is resolving in $\mathscr{A}$. Then by (3.3) and Theorem \ref{thm-3.2}(2)$(ii)$,
we have $\mathscr{T}{\text -}\pd K=n-1$. Since $\mathscr{C}{\text -}\pd K\geq\mathscr{T}{\text -}\pd K$,
we have $\mathscr{C}{\text -}\pd K=n-1$. By (3.4) and Theorem \ref{thm-3.2}(2)$(i)$,
we have $\mathscr{T}{\text -}\pd K^{'}=n$, and so $\mathscr{C}{\text -}\pd K^{'}=n$.
\end{proof}

Furthermore, we get the following result.

\begin{cor} \label{cor-3.5}
Assume that $\mathscr{A}$ has enough projective objects and
$\mathscr{T}$ is a resolving subcategory of $\mathscr{A}$ admitting an $\mathscr{E}$-coproper
cogenerator $\mathscr{C}$. Then
\begin{enumerate}
\item[$(1)$] $\mathscr{T}{\text -}\FPD\leq\mathscr{C}{\text -}\FPD$.
\item[$(2)$] If $\mathscr{T}\subseteq{^{\bot}\mathscr{C}}$, then
$\mathscr{T}{\text -}\pd M=\mathscr{C}{\text -}\pd M$ for any $M\in\mathscr{A}$ with
$\mathscr{C}{\text -}\pd M<\infty$.
\item[$(3)$] If $\mathscr{T}\subseteq{^{\bot}\mathscr{C}}$,
then $\mathscr{T}{\text -}\FPD=\mathscr{C}{\text -}\FPD$.
\end{enumerate}
\end{cor}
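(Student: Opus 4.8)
The plan is to obtain (1) directly from Corollary~\ref{cor-3.4}, to isolate a ``finite $\mathscr{C}$-dimension forces $\mathscr{C}$-projectivity'' lemma as the crux, and then to deduce (2) by an induction on $\mathscr{C}$-projective dimension that combines this lemma with Theorem~\ref{thm-3.2}; part (3) is then formal. Note first that, being resolving, $\mathscr{T}$ is closed under all extensions and a fortiori under $\mathscr{E}$-coproper ones, and it admits the $\mathscr{E}$-coproper cogenerator $\mathscr{C}$, so $\mathscr{T}$ is $\mathscr{E}$-precoresolving and Corollary~\ref{cor-3.4} applies to it. \emph{Proof of (1).} Let $M\in\mathscr{A}$ with $\mathscr{T}{\text -}\pd M=n<\infty$. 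If $n=0$ then $\mathscr{T}{\text -}\pd M=0\leq\mathscr{C}{\text -}\FPD$. If $n\geq 1$, then by Corollary~\ref{cor-3.4}(2) there is an object $K'$ with $\mathscr{C}{\text -}\pd K'=n<\infty$, so $n=\mathscr{C}{\text -}\pd K'\leq\mathscr{C}{\text -}\FPD$; taking the supremum over all such $M$ gives $\mathscr{T}{\text -}\FPD\leq\mathscr{C}{\text -}\FPD$.

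The key lemma I would establish is: \emph{if $\mathscr{T}\subseteq{^{\bot}\mathscr{C}}$, then every $T\in\mathscr{T}$ with $\mathscr{C}{\text -}\pd T<\infty$ belongs to $\mathscr{C}$.} To prove it, pick a $\mathscr{C}$-resolution $0\to C_m\to C_{m-1}\to\cdots\to C_0\to T\to 0$ of length $m=\mathscr{C}{\text -}\pd T$, and suppose $m\geq 1$. Let $\Omega$ be its $(m-1)$-st syzygy, so $0\to C_m\to C_{m-1}\to\Omega\to 0$ is exact. Since $\mathscr{T}$ is resolving and $T$ and all the $C_i$ lie in $\mathscr{C}\subseteq\mathscr{T}$, iteratively taking kernels of epimorphisms shows $\Omega\in\mathscr{T}$, hence $\Omega\in{^{\bot}\mathscr{C}}$ and $\Ext^1_{\mathscr{A}}(\Omega,C_m)=0$; therefore $0\to C_m\to C_{m-1}\to\Omega\to 0$ splits, $\Omega$ is a direct summand of $C_{m-1}\in\mathscr{C}$, and so $\Omega\in\mathscr{C}$. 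But then $0\to\Omega\to C_{m-2}\to\cdots\to C_0\to T\to 0$ is a $\mathscr{C}$-resolution of length $m-1$, contradicting the minimality of $m$. Hence $m=0$ and $T\in\mathscr{C}$.

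\emph{Proof of (2), granting the lemma.} We induct on $m=\mathscr{C}{\text -}\pd M$; the case $m=0$ is immediate since $\mathscr{C}\subseteq\mathscr{T}$. Let $m\geq 1$; as $\mathscr{C}\subseteq\mathscr{T}$ we have $\mathscr{T}{\text -}\pd M\leq m$, so it suffices to rule out $n:=\mathscr{T}{\text -}\pd M\leq m-1$. If $n=0$ then $M\in\mathscr{T}$, so the lemma forces $M\in\mathscr{C}$ and $m=0$, a contradiction; hence $n\geq 1$. Taking the first syzygy $\Omega^1$ of a $\mathscr{C}$-resolution of $M$ of minimal length $m$ gives an exact sequence $0\to\Omega^1\to C_0\to M\to 0$ with $C_0\in\mathscr{C}$ and $\mathscr{C}{\text -}\pd\Omega^1=m-1$, so by the induction hypothesis $\mathscr{T}{\text -}\pd\Omega^1=m-1$. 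On the other hand, Theorem~\ref{thm-3.2}(2)(ii)(a) applied to this sequence gives $\mathscr{T}{\text -}\pd\Omega^1\leq\max\{\mathscr{T}{\text -}\pd C_0,\,\mathscr{T}{\text -}\pd M-1\}=\max\{0,n-1\}=n-1\leq m-2$, a contradiction. Hence $\mathscr{T}{\text -}\pd M=m$, proving (2). Finally (3) follows: ``$\leq$'' is (1), and for ``$\geq$'' any $M$ with $\mathscr{C}{\text -}\pd M=m<\infty$ has $\mathscr{T}{\text -}\pd M=m$ by (2), whence $m\leq\mathscr{T}{\text -}\FPD$ and so $\mathscr{C}{\text -}\FPD\leq\mathscr{T}{\text -}\FPD$.

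The main obstacle is the key lemma of the second paragraph --- that a $\mathscr{T}$-object of finite $\mathscr{C}$-projective dimension is $\mathscr{C}$-projective. This is the only point at which the orthogonality hypothesis $\mathscr{T}\subseteq{^{\bot}\mathscr{C}}$ is used in an essential way (together with the fact that $\mathscr{T}$ is resolving, to push a syzygy into $\mathscr{T}$), and it is the relative analogue of the classical statement that a Gorenstein projective module of finite projective dimension is projective; once it is available, (2) and (3) are routine consequences of Theorem~\ref{thm-3.2} and Corollary~\ref{cor-3.4}.
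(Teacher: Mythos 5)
Your proof is correct, but for part (2) it takes a genuinely different route from the paper. The paper's argument for (2) goes in the opposite direction: starting from $m=\mathscr{T}{\text -}\pd M\leq n=\mathscr{C}{\text -}\pd M$, it uses the cosyzygy sequence $0\to M\to K'\to T'\to 0$ produced by Corollary \ref{cor-3.4} (with $\mathscr{C}{\text -}\pd K'=m$ and $T'\in\mathscr{T}$), observes that $\operatorname{Ext}^{\geq 1}_{\mathscr{A}}(T',M)=0$ by dimension shifting along a finite $\mathscr{C}$-resolution of $M$ (this is where $\mathscr{T}\subseteq{^\bot\mathscr{C}}$ enters), concludes that the sequence splits and $M$ is a direct summand of $K'$, and then gets $n\leq m$ from the fact that relative projective dimension does not increase on direct summands. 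You instead isolate the structural lemma that a $\mathscr{T}$-object of finite $\mathscr{C}$-projective dimension lies in $\mathscr{C}$ (splitting the \emph{last} step of a minimal $\mathscr{C}$-resolution of a syzygy, which lies in $\mathscr{T}$ because $\mathscr{T}$ is resolving), and then run an induction on $\mathscr{C}{\text -}\pd M$ via Theorem \ref{thm-3.2}(2)$(ii)(a)$. Both arguments exploit the same orthogonality to split a short exact sequence, just different ones. The paper's version is shorter and induction-free; yours has the advantage of making explicit the relative analogue of ``Gorenstein projective of finite projective dimension implies projective,'' which is a reusable statement in its own right. Your parts (1) and (3) coincide with the paper's. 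One cosmetic remark: in the key lemma you write that ``$T$ and all the $C_i$ lie in $\mathscr{C}\subseteq\mathscr{T}$''; of course $T$ need only lie in $\mathscr{T}$, which is all the argument uses.
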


\begin{proof}
(1) Let $M\in\mathscr{A}$ with $\mathscr{T}{\text -}\pd M=n<\infty$. Then by Corollary \ref{cor-3.4},
there exists $K^{'}\in\mathscr{A}$ such that $\mathscr{C}{\text -}\pd K^{'}=n$. It follows that
$\mathscr{T}{\text -}\FPD\leq\mathscr{C}{\text -}\FPD$.

(2) Let $M\in\mathscr{A}$ with $\mathscr{C}{\text -}\pd M=n<\infty$. Then $\mathscr{T}{\text -}\pd M=m\leq n$.
By Corollary \ref{cor-3.4}, there exists an exact sequence
$$0\to M\to K^{'}\to T^{'}\to 0$$
in $\mathscr{A}$ with $\mathscr{C}{\text -}\pd K^{'}=m$ and $T^{'}\in\mathscr{T}$.
Since $\mathscr{T}\subseteq{^{\bot}\mathscr{C}}$ by assumption, we have $\Ext^{\geq 1}_{\mathscr{A}}(T^{'},M)=0$
by dimension shifting. So the above exact sequence splits and $M$ is isomorphic to a direct summand of $K^{'}$.
So $n=\mathscr{C}{\text -}\pd M\leq m$ by \cite[Corollary 3.9]{Hu2}, and hence $m=n$ and $\mathscr{T}{\text -}\pd M=n$.

(3) By (2), we have $\mathscr{C}{\text -}\FPD\leq\mathscr{T}{\text -}\FPD$. So the assertion follows from (1).
\end{proof}


In the next section, we need the following two propositions.

\begin{prop} \label{prop-3.6}
Let $\mathscr{E}$ and $\mathscr{C}$ be subcategories of $\mathscr{A}$.
If $^{\bot}\mathscr{E}\cap\widetilde{\cores_{\mathscr{E}}\mathscr{C}}$
is closed under ($\mathscr{E}$-coproper) extensions, then it is
closed under kernels of epimorphisms. In particular, if
$\cores{\mathscr{C}}:=\widetilde{\cores_{\mathcal{I}(\mathscr{A})}{\mathscr{C}}}$
is closed under extensions, then it is closed under kernels of epimorphisms.
\end{prop}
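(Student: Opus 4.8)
The plan is to reduce to a single short exact sequence and then build the required coresolution of its kernel by hand. Write $\mathscr{Y}:={}^{\bot}\mathscr{E}\cap\widetilde{\cores_{\mathscr{E}}\mathscr{C}}$, assume $\mathscr{Y}$ is closed under $\mathscr{E}$-coproper extensions, and let
$$0\to A\to B\to C\to 0$$
be exact with $B,C\in\mathscr{Y}$; the goal is $A\in\mathscr{Y}$. Two things come for free. First, since $C\in{}^{\bot}\mathscr{E}$ we have $\Ext^{1}_{\mathscr{A}}(C,E)=0$ for every $E\in\mathscr{E}$, so applying $\Hom_{\mathscr{A}}(-,E)$ shows that the sequence above is $\Hom_{\mathscr{A}}(-,\mathscr{E})$-exact. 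Second, the long exact sequence of $\Ext_{\mathscr{A}}(-,E)$ together with $\Ext^{\geq 1}_{\mathscr{A}}(B,E)=0=\Ext^{\geq 1}_{\mathscr{A}}(C,E)$ forces $\Ext^{\geq 1}_{\mathscr{A}}(A,E)=0$, i.e. $A\in{}^{\bot}\mathscr{E}$. Thus everything reduces to producing one $\Hom_{\mathscr{A}}(-,\mathscr{E})$-exact $\mathscr{C}$-coresolution of $A$.

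For that I would use the first cosyzygy of $B$. Since $B\in\widetilde{\cores_{\mathscr{E}}\mathscr{C}}$, fix a $\Hom_{\mathscr{A}}(-,\mathscr{E})$-exact exact sequence $0\to B\to C^{0}\to B^{1}\to 0$ with $C^{0}\in\mathscr{C}$, obtained by truncating a $\mathscr{C}$-coresolution of $B$; then $B^{1}\in\widetilde{\cores_{\mathscr{E}}\mathscr{C}}$, and (checking also that $B^{1}\in{}^{\bot}\mathscr{E}$ by dimension shifting from $B\in{}^{\bot}\mathscr{E}$) one has $B^{1}\in\mathscr{Y}$. Since $A\subseteq B\subseteq C^{0}$, setting $P:=C^{0}/A$ and using the identification of quotients along a chain of subobjects we get the exact sequences
$$0\to A\to C^{0}\to P\to 0,\qquad 0\to C\to P\to B^{1}\to 0,$$
and a routine diagram chase with $\Hom_{\mathscr{A}}(-,E)$ (using the two free facts and the $\Hom_{\mathscr{A}}(-,\mathscr{E})$-exactness of $0\to B\to C^{0}\to B^{1}\to 0$) shows both are $\Hom_{\mathscr{A}}(-,\mathscr{E})$-exact. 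Applying the hypothesis to the second one, whose outer terms $C,B^{1}$ lie in $\mathscr{Y}$ and which is $\mathscr{E}$-coproper, gives $P\in\mathscr{Y}$, in particular $P\in\widetilde{\cores_{\mathscr{E}}\mathscr{C}}$. Splicing $0\to A\to C^{0}\to P\to 0$ onto a $\Hom_{\mathscr{A}}(-,\mathscr{E})$-exact $\mathscr{C}$-coresolution of $P$ — exactness and $\Hom_{\mathscr{A}}(-,\mathscr{E})$-exactness at the splice being immediate because $C^{0}\twoheadrightarrow P$ — yields a $\Hom_{\mathscr{A}}(-,\mathscr{E})$-exact $\mathscr{C}$-coresolution of $A$. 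Hence $A\in\widetilde{\cores_{\mathscr{E}}\mathscr{C}}$, and with the second free fact $A\in\mathscr{Y}$, as required.

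The ``in particular'' clause is the special case $\mathscr{E}=\mathcal{I}(\mathscr{A})$: then ${}^{\bot}\mathcal{I}(\mathscr{A})=\mathscr{A}$, every exact sequence is $\Hom_{\mathscr{A}}(-,\mathcal{I}(\mathscr{A}))$-exact (so ``$\mathscr{E}$-coproper extension'' just means ``extension''), and $\widetilde{\cores_{\mathcal{I}(\mathscr{A})}\mathscr{C}}=\cores\mathscr{C}$, whence ${}^{\bot}\mathcal{I}(\mathscr{A})\cap\widetilde{\cores_{\mathcal{I}(\mathscr{A})}\mathscr{C}}=\cores\mathscr{C}$ and the assertion follows at once.

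The step I expect to be the main obstacle is the bookkeeping around $\Hom_{\mathscr{A}}(-,\mathscr{E})$-exactness: one must verify that each short exact sequence extracted from the chain $A\subseteq B\subseteq C^{0}$, and then the spliced coresolution, is again $\Hom_{\mathscr{A}}(-,\mathscr{E})$-exact, since only then may $0\to C\to P\to B^{1}\to 0$ be legitimately fed into the closure hypothesis and only then is the splice valid; one must also be careful that the cosyzygy $B^{1}$ lands back inside $\mathscr{Y}$ and not merely inside $\widetilde{\cores_{\mathscr{E}}\mathscr{C}}$. These are all elementary chases with $\Hom_{\mathscr{A}}(-,E)$, but they are the part that has to be carried out with care.
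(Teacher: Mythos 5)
Your argument is correct and is essentially the paper's own proof: your $P=C^{0}/A$ is exactly the pushout $T$ that the paper forms from $B\to C^{0}$ and $B\to C$, your two sequences $0\to A\to C^{0}\to P\to 0$ and $0\to C\to P\to B^{1}\to 0$ are its middle row and rightmost column, and the closure hypothesis is applied to the same extension before splicing. One caveat, which you share with the paper: your claim that the cosyzygy $B^{1}$ lies in ${}^{\bot}\mathscr{E}$ ``by dimension shifting from $B\in{}^{\bot}\mathscr{E}$'' in fact requires $C^{0}\in{}^{\bot}\mathscr{E}$, since the $\Hom_{\mathscr{A}}(-,\mathscr{E})$-exactness of $0\to B\to C^{0}\to B^{1}\to 0$ together with $B\in{}^{\bot}\mathscr{E}$ yields $\Ext^{i}_{\mathscr{A}}(B^{1},E)\cong\Ext^{i}_{\mathscr{A}}(C^{0},E)$ for all $i\geq 1$ rather than vanishing; so this step silently uses $\mathscr{C}\subseteq{}^{\bot}\mathscr{E}$, which holds in every application the paper makes but is not a stated hypothesis, and the paper's proof asserts the corresponding fact ($T_{1}'\in{}^{\bot}\mathscr{E}\cap\widetilde{\cores_{\mathscr{E}}\mathscr{C}}$) at the same point without comment.
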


\begin{proof}
Let
$$0\to A\to T_1\to T_2\to 0$$
be an exact sequence in $\mathscr{A}$ with $T_1,T_2\in
{^{\bot}\mathscr{E}\cap\widetilde{\cores_{\mathscr{E}}\mathscr{C}}}$.
Then there exists a $\Hom_{\mathscr{A}}(-,\mathscr{E})$-exact exact sequence
$$0\to T_1\to C\to T^{'}_1\to 0$$
in $\mathscr{A}$ with $C\in\mathscr{C}$ and
$T^{'}_1\in{^{\bot}\mathscr{E}\cap\widetilde{\cores_{\mathscr{E}}\mathscr{C}}}$.
Consider the following push-out diagram
$$\xymatrix{&  &  0 \ar[d] & 0 \ar[d] & \\
0\ar[r] & A\ar[r] \ar@{==}[d]& T_1 \ar[r] \ar[d]& T_2 \ar[r] \ar@{-->}[d]& 0\\
0\ar@{-->}[r] & A\ar@{-->}[r] & C \ar@{-->}[r] \ar[d]& T \ar@{-->}[r] \ar@{-->}[d]& 0\\
& & T^{'}_1\ar@{==}[r] \ar[d]&T^{'}_1\ar@{-->}[d]\\
& & 0 & 0.& }$$
By \cite[Lemma 2.4(2)]{Hu1}, all columns and rows in this diagram
are $\Hom_{\mathscr{A}}(-,\mathscr{E})$-exact exact sequences. If
$^{\bot}\mathscr{E}\cap\widetilde{\cores_{\mathscr{E}}\mathscr{C}}$ is closed under
$\mathscr{E}$-coproper extensions, then the rightmost column implies
$T\in{^{\bot}\mathscr{E}\cap\widetilde{\cores_{\mathscr{E}}\mathscr{C}}}$,
and thus the middle row yields
$A\in{^{\bot}\mathscr{E}\cap\widetilde{\cores_{\mathscr{E}}\mathscr{C}}}$.

The latter assertion follows from the former one by putting
$\mathscr{E}=\mathcal{I}(\mathscr{A})$.
\end{proof}

\begin{prop} \label{prop-3.7}
Let $\mathscr{E}$ be a subcategory of $\mathscr{A}$. If $\mathscr{T}$ is an
$\mathscr{E}$-precoresolving subcategory of $\mathscr{A}$ admitting an $\mathscr{E}$-coproper
cogenerator $\mathscr{C}$, then
$\widetilde{\cores_{\mathscr{E}}\mathscr{C}}=\widetilde{\cores_{\mathscr{E}}\mathscr{T}}$.
\end{prop}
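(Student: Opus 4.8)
The plan is to prove the two inclusions of $\widetilde{\cores_{\mathscr{E}}\mathscr{C}}=\widetilde{\cores_{\mathscr{E}}\mathscr{T}}$ separately. One of them, $\widetilde{\cores_{\mathscr{E}}\mathscr{C}}\subseteq\widetilde{\cores_{\mathscr{E}}\mathscr{T}}$, is immediate: since $\mathscr{C}\subseteq\mathscr{T}$, any $\Hom_{\mathscr{A}}(-,\mathscr{E})$-exact exact coresolution of an object by objects of $\mathscr{C}$ is in particular one by objects of $\mathscr{T}$. As a warm-up for the reverse inclusion I would first record that $\mathscr{T}\subseteq\widetilde{\cores_{\mathscr{E}}\mathscr{C}}$: given $T\in\mathscr{T}$, the $\mathscr{E}$-coproper cogenerator $\mathscr{C}$ supplies a $\Hom_{\mathscr{A}}(-,\mathscr{E})$-exact exact sequence $0\to T\to C^{0}\to T^{1}\to 0$ with $C^{0}\in\mathscr{C}$ and $T^{1}\in\mathscr{T}$; iterating on $T^{1}$ and then splicing all the resulting short exact sequences (a splice of $\Hom_{\mathscr{A}}(-,\mathscr{E})$-exact short exact sequences is again $\Hom_{\mathscr{A}}(-,\mathscr{E})$-exact, as one checks by applying $\Hom_{\mathscr{A}}(-,E)$ to the pieces) yields a $\Hom_{\mathscr{A}}(-,\mathscr{E})$-exact exact sequence $0\to T\to C^{0}\to C^{1}\to\cdots$ with all terms in $\mathscr{C}$.

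For $\widetilde{\cores_{\mathscr{E}}\mathscr{T}}\subseteq\widetilde{\cores_{\mathscr{E}}\mathscr{C}}$ I would reduce everything to the following one-step claim: if $M\in\widetilde{\cores_{\mathscr{E}}\mathscr{T}}$, then there is a $\Hom_{\mathscr{A}}(-,\mathscr{E})$-exact exact sequence $0\to M\to C\to M'\to 0$ with $C\in\mathscr{C}$ and $M'\in\widetilde{\cores_{\mathscr{E}}\mathscr{T}}$. Granting this, I iterate it: starting from $M_{0}:=M$ I obtain $\Hom_{\mathscr{A}}(-,\mathscr{E})$-exact exact sequences $0\to M_{k}\to C_{k}\to M_{k+1}\to 0$ with $C_{k}\in\mathscr{C}$ and each $M_{k}\in\widetilde{\cores_{\mathscr{E}}\mathscr{T}}$, and splicing them produces a $\Hom_{\mathscr{A}}(-,\mathscr{E})$-exact exact sequence $0\to M\to C_{0}\to C_{1}\to\cdots$ with all terms in $\mathscr{C}$, i.e.\ $M\in\widetilde{\cores_{\mathscr{E}}\mathscr{C}}$.

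To prove the claim I would start from the first short exact sequence $0\to M\to T^{0}\to M^{1}\to 0$ of a $\Hom_{\mathscr{A}}(-,\mathscr{E})$-exact $\mathscr{T}$-coresolution of $M$ (so $T^{0}\in\mathscr{T}$ and $M^{1}\in\widetilde{\cores_{\mathscr{E}}\mathscr{T}}$), together with a $\Hom_{\mathscr{A}}(-,\mathscr{E})$-exact exact sequence $0\to T^{0}\to C\to T'\to 0$ furnished by the $\mathscr{E}$-coproper cogenerator, with $C\in\mathscr{C}$ and $T'\in\mathscr{T}$. Forming the push-out of $T^{0}\to M^{1}$ along $T^{0}\to C$ and applying \cite[Lemma 2.4(2)]{Hu1} exactly as in the proof of Proposition \ref{prop-3.6}, I get $\Hom_{\mathscr{A}}(-,\mathscr{E})$-exact exact sequences $0\to M\to C\to M'\to 0$ and $0\to M^{1}\to M'\to T'\to 0$. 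It remains to see that $M'\in\widetilde{\cores_{\mathscr{E}}\mathscr{T}}$: I pick a $\Hom_{\mathscr{A}}(-,\mathscr{E})$-exact $\mathscr{T}$-coresolution $0\to M^{1}\to T^{1}\to T^{2}\to\cdots$ of $M^{1}$, push out $0\to M^{1}\to M'\to T'\to 0$ along the ($\Hom_{\mathscr{A}}(-,\mathscr{E})$-proper) monomorphism $M^{1}\to T^{1}$, and again invoke \cite[Lemma 2.4(2)]{Hu1} to get $\Hom_{\mathscr{A}}(-,\mathscr{E})$-exact exact sequences $0\to T^{1}\to E\to T'\to 0$ and $0\to M'\to E\to Z^{1}\to 0$, where $Z^{1}:=\coker(M^{1}\to T^{1})\in\widetilde{\cores_{\mathscr{E}}\mathscr{T}}$ (it is the cosyzygy appearing in the coresolution of $M^{1}$). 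Since $\mathscr{T}$ is closed under $\mathscr{E}$-coproper extensions, $E\in\mathscr{T}$, and splicing $0\to M'\to E\to Z^{1}\to 0$ with the tail $0\to Z^{1}\to T^{2}\to T^{3}\to\cdots$ exhibits $M'\in\widetilde{\cores_{\mathscr{E}}\mathscr{T}}$, which proves the claim.

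The only point that needs care — and which I expect to be the main obstacle in writing this out cleanly — is the repeated transfer of the property "$\Hom_{\mathscr{A}}(-,\mathscr{E})$-exact" through the push-out squares and through the (finite and infinite) splices; both are handled by \cite[Lemma 2.4(2)]{Hu1}, precisely as in Proposition \ref{prop-3.6}, and by a routine diagram chase applying $\Hom_{\mathscr{A}}(-,E)$ to the short exact pieces. Apart from that, the argument uses only dimension-shifting/horseshoe-type manipulations together with the two defining features of an $\mathscr{E}$-precoresolving subcategory: the existence of the $\mathscr{E}$-coproper cogenerator $\mathscr{C}$, and closure of $\mathscr{T}$ under $\mathscr{E}$-coproper extensions.
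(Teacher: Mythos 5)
Your proposal is correct and follows essentially the same route as the paper: the same two push-out constructions (one to replace the leading $\mathscr{T}$-term by a $\mathscr{C}$-term, one to verify that the new cosyzygy remains in $\widetilde{\cores_{\mathscr{E}}\mathscr{T}}$ via closure under $\mathscr{E}$-coproper extensions), with $\Hom_{\mathscr{A}}(-,\mathscr{E})$-exactness propagated by \cite[Lemma 2.4(2)]{Hu1}, followed by iteration and splicing. The only difference is cosmetic (your one-step claim packages what the paper writes as ``continuing this process,'' and your warm-up observation $\mathscr{T}\subseteq\widetilde{\cores_{\mathscr{E}}\mathscr{C}}$ is not needed).
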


\begin{proof}
It is trivial that $\widetilde{\cores_{\mathscr{E}}\mathscr{C}}\subseteq\widetilde{\cores_{\mathscr{E}}\mathscr{T}}$.
Now let $M\in\widetilde{\cores_{\mathscr{E}}\mathscr{T}}$ and let
$$0\to M\to T\to M^{'}\to 0$$
be a $\Hom_{\mathscr{A}}(-,\mathscr{E})$-exact exact sequence in $\mathscr{A}$ with $T\in\mathscr{T}$
and $M^{'}\in\widetilde{\cores_{\mathscr{E}}\mathscr{T}}$.
Since $\mathscr{T}$ admits $\mathscr{E}$-coproper cogenerator
$\mathscr{C}$ by assumption, there exists a $\Hom_{\mathscr{A}}(-,\mathscr{E})$-exact exact sequence
$$0\to T\to C^{0} \to T^{'}\to 0$$
in $\mathscr{A}$ with $C^{0}\in\mathscr{C}$ and in $T^{'}\in\mathscr{T}$.
Then we have the following push-out diagram
$$\xymatrix{&  &  0 \ar[d] & 0 \ar[d] & \\
0\ar[r] & M\ar[r] \ar@{==}[d]& T \ar[r] \ar[d]& M^{'} \ar[r] \ar@{-->}[d]& 0\\
0\ar@{-->}[r] & M\ar@{-->}[r] & C^{0} \ar@{-->}[r] \ar[d]& M^{1} \ar@{-->}[r] \ar@{-->}[d]& 0\\
& & T^{'}\ar@{==}[r] \ar[d]&T^{'} \ar@{-->}[d]\\
& & 0 & 0.& }$$
Since there also exists a $\Hom_{\mathscr{A}}(-,\mathscr{E})$-exact exact sequence
$$0\to M^{'}\to T^{''}\to M^{''}\to 0$$
in $\mathscr{A}$ with $T^{''}\in\mathscr{T}$ and $M^{''}\in\widetilde{\cores_{\mathscr{E}}\mathscr{T}}$,
we have the following push-out diagram
$$\xymatrix{&  0 \ar[d] & 0 \ar[d] & & \\
0\ar[r] & M^{'}\ar[r] \ar[d]& T^{''} \ar[r] \ar@{-->}[d]& M^{''} \ar[r] \ar@{==}[d]& 0\\
0\ar@{-->}[r] & M^{1}\ar@{-->}[r]\ar[d] & T^{1} \ar@{-->}[r] \ar@{-->}[d]& M^{''} \ar@{-->}[r] & 0\\
& T^{'}\ar@{==}[r] \ar[d]&T^{'} \ar@{-->}[d]&\\
& 0 & 0.&& }$$
It follows from \cite[Lemma 2.4(2)]{Hu1} that all columns and rows in the above two diagrams are
$\Hom_{\mathscr{A}}(-,\mathscr{E})$-exact exact sequences.
Since $\mathscr{T}$ is closed under $\mathscr{E}$-coproper extensions
by assumption, the middle column in the second diagram implies $T^{1}\in\mathscr{T}$, and hence the middle row
in this diagram implies $M^{1}\in\widetilde{\cores_{\mathscr{E}}\mathscr{T}}$. Similarly, we get a
$\Hom_{\mathscr{A}}(-,\mathscr{E})$-exact exact sequence
$$0\to M^{1}\to C^{1}\to M^{2}\to 0$$
in $\mathscr{A}$ with $C^{1}\in\mathscr{C}$ and $M^{2}\in\widetilde{\cores_{\mathscr{E}}\mathscr{T}}$.
Continuing this process, we get a $\Hom_{\mathscr{A}}(-,\mathscr{E})$-exact exact sequence
$$0\to M\to C^{0}\to C^{1}\to\cdots\to C^i\to\cdots$$
in $\mathscr{A}$ with all $C^{i}$ in $\mathscr{C}$. It follows that $M\in\widetilde{\cores_{\mathscr{E}}\mathscr{C}}$
and $\widetilde{\cores_{\mathscr{E}}\mathscr{T}}\subseteq\widetilde{\cores_{\mathscr{E}}\mathscr{C}}$.
\end{proof}

\subsection {Injective dimension relative to coresolving subcategories}

All results and their proofs in this subsection are completely dual to those in Subsection 3.1,
so we only list the results without proofs.

\begin{lem} \label{lem-3.1'}
Let $M\in\mathscr{A}$ and $n\geq 0$.
\begin{enumerate}
\item[$(1)$] Assume that $\mathscr{A}$ has enough injective objects.
If $\mathscr{T}$ is a coresolving subcategory of $\mathscr{A}$, then
the following statements are equivalent.
\begin{enumerate}
\item[$(1.1)$] $\mathscr{T}{\text -}\id M\leq n$.
\item[$(1.2)$] There exists an exact sequence
$$0\to M\to I^0\to I^1\to\cdots\to I^{n-1}\to K^n\to 0$$
in $\mathscr{A}$ with all $I^i$ injective and $K^n\in\mathscr{T}$.
\item[$(1.3)$] For any exact sequence
$$0\to M\to I^0\to I^1\to\cdots\to I^{n-1}\to K^n\to 0$$
in $\mathscr{A}$, if all $I^i$ are injective, then $K^n\in\mathscr{T}$.
\item[$(1.4)$] For any exact sequence
$$0\to M\to T^0\to T^1\to\cdots\to T^{n-1}\to K^n\to 0$$
in $\mathscr{A}$, if all $T^i$ are in $\mathscr{T}$, then
$K^n\in\mathscr{T}$.
\end{enumerate}
\item[$(2)$] Let $\mathscr{E}$ be a subcategory of $\mathscr{A}$.
If $\mathscr{T}$ is an $\mathscr{E}$-preresolving subcategory of $\mathscr{A}$
admitting an $\mathscr{E}$-proper generator $\mathscr{C}$, then
the following statements are equivalent.
\begin{enumerate}
\item[$(2.1)$] $\mathscr{T}{\text -}\id M\leq n$.
\item[$(2.2)$] There exists an exact sequence
$$0\to M\to T^0\to C^1\to\cdots\to C^{n-1}\to C^n\to 0$$
in $\mathscr{A}$ with $T^0\in\mathscr{T}$ and all $C^i$ in $\mathscr{C}$;
that is, there exists an exact sequence
$$0\to M\to T\to K\to 0$$
in $\mathscr{A}$ with $T\in\mathscr{T}$ and $\mathscr{C}$-$\id K\leq n-1$.
\end{enumerate}
\end{enumerate}
\end{lem}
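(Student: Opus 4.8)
The plan is to obtain Lemma~\ref{lem-3.1'} by transposing the proof of Lemma~\ref{lem-3.1} verbatim, swapping projectives for injectives, kernels of epimorphisms for cokernels of monomorphisms, syzygies for cosyzygies, and $\Hom_{\mathscr{A}}(\mathscr{E},-)$-exactness for $\Hom_{\mathscr{A}}(-,\mathscr{E})$-exactness. Nothing new is needed; the point is only to verify that the inputs used there have the expected dual form and that the hypotheses are the dual ones (``$\mathscr{A}$ has enough injective objects'' in place of ``enough projective objects'', and an $\mathscr{E}$-proper generator for an $\mathscr{E}$-preresolving $\mathscr{T}$ in place of an $\mathscr{E}$-coproper cogenerator for an $\mathscr{E}$-precoresolving $\mathscr{T}$), both of which are already built into the statement.

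For part $(1)$, the implications $(1.4)\Rightarrow(1.3)\Rightarrow(1.2)$ are trivial because, $\mathscr{T}$ being coresolving, every injective object lies in $\mathscr{T}$, so an injective coresolution is in particular a coresolution by objects of $\mathscr{T}$. For $(1.1)\Leftrightarrow(1.2)$ I would invoke the injective-dimension analogue of \cite[Theorem 3.6]{Hu2}: using that $\mathscr{A}$ has enough injectives and that $\mathscr{T}$ is closed under extensions and cokernels of monomorphisms and contains $\mathcal{I}(\mathscr{A})$, one shows that the $n$-th cosyzygy in an injective coresolution of $M$ lies in $\mathscr{T}$ exactly when $\mathscr{T}{\text -}\id M\leq n$. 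For $(1.1)\Rightarrow(1.4)$ I would use the dual of \cite[Lemma 3.12]{AB} (comparison of cosyzygies): if $\mathscr{T}{\text -}\id M\leq n$ and $0\to M\to T^0\to\cdots\to T^{n-1}\to K^n\to 0$ is exact with all $T^i\in\mathscr{T}$, then $K^n$ is a direct summand of a suitable object assembled from the $T^i$ and an injective coresolution, hence $K^n\in\mathscr{T}$ since $\mathscr{T}$ is closed under direct summands and cokernels of monomorphisms and contains the injectives. Part $(2)$ is then read off directly from \cite[Theorem 4.7]{Hu2}, applied on the preresolving side: the relevant $\mathscr{E}$-proper resolution of the $\mathscr{E}$-proper generator $\mathscr{C}$ computes $\mathscr{T}{\text -}\id M$ just as on the dual side.

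I do not expect any genuine obstacle, since \cite[Theorem~3.6]{Hu2}, \cite[Theorem~4.7]{Hu2} and \cite[Lemma~3.12]{AB} all rest on arguments (Schanuel-type comparison, dimension shifting, closure properties) that are symmetric under the projective/injective duality; the only care required is the bookkeeping of hypotheses described above, which is why the two ``enough objects'' and ``generator/cogenerator'' clauses in the statement of Lemma~\ref{lem-3.1'} are exactly the mirror images of those in Lemma~\ref{lem-3.1}.
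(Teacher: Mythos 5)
Your proposal is correct and coincides with the paper's intended argument: the paper explicitly states that all results in Subsection 3.2 are obtained by dualizing Subsection 3.1, and the proof of Lemma \ref{lem-3.1} consists exactly of the three ingredients you dualize (the trivial implications $(1.4)\Rightarrow(1.3)\Rightarrow(1.2)$, \cite[Theorem 3.6]{Hu2} for $(1.1)\Leftrightarrow(1.2)$, \cite[Lemma 3.12]{AB} for $(1.1)\Rightarrow(1.4)$, and \cite[Theorem 4.7]{Hu2} for part $(2)$). Your bookkeeping of the mirrored hypotheses is accurate, so nothing further is needed.
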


The main result in this subsection is as follows.

\begin{thm} \label{thm-3.2'}
Assume that $\mathscr{A}$ has enough injective objects and
$\mathscr{T}$ is a subcategory of $\mathscr{A}$ containing $\mathcal{I}(\mathscr{A})$.
Then the following statements are equivalent.
\begin{enumerate}
\item[$(1)$] $\mathscr{T}$ is coresolving.
\item[$(2)$] For any exact sequence
$$0 \to A_1 \to A_2 \to A_3 \to 0$$
in $\mathscr{A}$, we have
\begin{enumerate}
\item[$(i)$] $(a)$ $\mathscr{T}{\text -}\id A_2\leq
\max\{\mathscr{T}{\text -}\id A_1,\mathscr{T}{\text -}\id A_3\}$,
$(b)$ the equality holds if
$\mathscr{T}{\text -}\id A_1\neq\mathscr{T}{\text -}\id A_3$ $+1$.
\item[$(ii)$] $(a)$ $\mathscr{T}{\text -}\id A_3\leq
\max\{\mathscr{T}{\text -}\id A_1-1,\mathscr{T}{\text -}\id A_2\}$,
$(b)$ the equality holds if
$\mathscr{T}{\text -}\id A_1\neq\mathscr{T}{\text -}\id A_2$.
\item[$(iii)$] $(a)$ $\mathscr{T}{\text -}\id A_1\leq
\max\{\mathscr{T}{\text -}\id A_2,\mathscr{T}{\text -}\id A_3+1\}$,
$(b)$ the equality holds if
$\mathscr{T}{\text -}\id A_2\neq\mathscr{T}{\text -}\id A_3$.
\end{enumerate}
\end{enumerate}
\end{thm}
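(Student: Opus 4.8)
The plan is to obtain Theorem~\ref{thm-3.2'} by running the proof of Theorem~\ref{thm-3.2} through the evident dual dictionary: ``enough projective objects'' becomes ``enough injective objects'', $\mathcal{P}(\mathscr{A})$ becomes $\mathcal{I}(\mathscr{A})$, projective resolutions become injective coresolutions, syzygies become cosyzygies, ``kernels of epimorphisms'' becomes ``cokernels of monomorphisms'', pull-back diagrams become push-out diagrams, Lemma~\ref{lem-3.1}(1) becomes Lemma~\ref{lem-3.1'}(1), and \cite[Corollary 3.9]{Hu2} becomes its dual for the $\mathscr{T}$-injective dimension of a direct summand. The implication $(2)\Rightarrow(1)$ is immediate: from $(i)(a)$, a short exact sequence with $A_1,A_3\in\mathscr{T}$ forces $\mathscr{T}{\text -}\id A_2=0$, so $\mathscr{T}$ is closed under extensions; from $(ii)(a)$, a short exact sequence with $A_1,A_2\in\mathscr{T}$ forces $\mathscr{T}{\text -}\id A_3\leq\max\{-1,0\}=0$, so $\mathscr{T}$ is closed under cokernels of monomorphisms; since $\mathcal{I}(\mathscr{A})\subseteq\mathscr{T}$, this makes $\mathscr{T}$ coresolving.

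For $(1)\Rightarrow(2)$ I would first prove the three ``$(a)$'' inequalities, the infinite cases being vacuous. For $(i)(a)$, if $\max\{\mathscr{T}{\text -}\id A_1,\mathscr{T}{\text -}\id A_3\}=0$ then coresolvingness gives $A_2\in\mathscr{T}$; otherwise, setting $n$ equal to that maximum, I would use Lemma~\ref{lem-3.1'}(1) to choose injective coresolutions of $A_1$ and $A_3$ whose $n$-th cosyzygies $K'$ and $K''$ lie in $\mathscr{T}$, apply the dual horseshoe lemma to build an injective coresolution of $A_2$ together with a short exact sequence $0\to K'\to K\to K''\to 0$, and conclude $K\in\mathscr{T}$, i.e.\ $\mathscr{T}{\text -}\id A_2\leq n$. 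For $(ii)(a)$, writing $n_1=\mathscr{T}{\text -}\id A_1$ and $n_2=\mathscr{T}{\text -}\id A_2$: if $n_1=0$ I would dispose of $n_2=0$ directly (then $A_3\in\mathscr{T}$) and, for $n_2\geq1$, pick $0\to A_2\to I\to A_2''\to 0$ with $I$ injective and $\mathscr{T}{\text -}\id A_2''\leq n_2-1$ and form the push-out dual to the pull-back used for $(ii)(a)$ in Theorem~\ref{thm-3.2}, yielding exact sequences $0\to A_1\to I\to T\to 0$ and $0\to A_3\to T\to A_2''\to 0$; then $T\in\mathscr{T}$ by closure under cokernels of monomorphisms and $\mathscr{T}{\text -}\id A_3\leq\mathscr{T}{\text -}\id A_2''+1\leq n_2$; if $n_1\geq1$ I would pick $0\to A_1\to I\to A_1''\to 0$ with $I$ injective and $\mathscr{T}{\text -}\id A_1''\leq n_1-1$, form the push-out giving $0\to A_2\to A_3\oplus I\to A_1''\to 0$, apply $(i)(a)$ to get $\mathscr{T}{\text -}\id(A_3\oplus I)\leq\max\{n_2,n_1-1\}$, and invoke the dual of \cite[Corollary 3.9]{Hu2} since $A_3$ is a direct summand. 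For $(iii)(a)$, writing $n_2=\mathscr{T}{\text -}\id A_2$ and $n_3=\mathscr{T}{\text -}\id A_3$: the case $n_2=0$ is read off the given sequence, and for $n_2\geq1$ I would form the same push-out to get $0\to A_1\to I\to K\to 0$ and $0\to A_3\to K\to A_2''\to 0$, bound $\mathscr{T}{\text -}\id K\leq\max\{n_3,n_2-1\}$ by $(i)(a)$, and conclude $\mathscr{T}{\text -}\id A_1\leq\mathscr{T}{\text -}\id K+1\leq\max\{n_3+1,n_2\}$.

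The three ``$(b)$'' clauses would then follow by the same inequality chase as in Theorem~\ref{thm-3.2}: for instance, if $\mathscr{T}{\text -}\id A_3+1<\mathscr{T}{\text -}\id A_1$ then $(iii)(a)$ forces $\mathscr{T}{\text -}\id A_1\leq\mathscr{T}{\text -}\id A_2$ while $(i)(a)$ gives the reverse inequality (using $\mathscr{T}{\text -}\id A_3<\mathscr{T}{\text -}\id A_1$), so $\mathscr{T}{\text -}\id A_2=\mathscr{T}{\text -}\id A_1=\max\{\mathscr{T}{\text -}\id A_1,\mathscr{T}{\text -}\id A_3\}$; the complementary case $\mathscr{T}{\text -}\id A_1<\mathscr{T}{\text -}\id A_3+1$ is handled by combining $(i)(a)$ with $(ii)(a)$, and $(ii)(b)$, $(iii)(b)$ are entirely analogous. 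I do not anticipate a conceptual obstacle, since this is a line-by-line dualization of an argument already carried out; the main risk is clerical — keeping each ``$\pm1$'' attached to the correct term, remembering that under the dictionary the roles of clauses $(ii)$ and $(iii)$ are interchanged relative to Theorem~\ref{thm-3.2}, and checking that every dualized square really is a push-out with the claimed exact rows and columns.
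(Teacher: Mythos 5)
Your proposal is correct and is exactly the route the paper intends: the paper states explicitly that all results in Subsection 3.2 are proved by dualizing Subsection 3.1 and omits the details, and your dualization (including the swap of $A_1$ and $A_3$ in clauses $(ii)$/$(iii)$, the splitting of the push-out row $0\to I\to W\to A_3\to 0$ to get $A_3\oplus I$, and the inequality chases for the $(b)$ clauses) carries this out accurately.
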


As an immediate consequence, we get the following result.

\begin{cor} \label{cor-3.3'}
Assume that $\mathscr{A}$ has enough injective objects and
$\mathscr{T}$ is a coresolving subcategory of $\mathscr{A}$. Then
$\mathscr{T}$-$\id^{<\infty}$ satisfies the two-out-of-three property; that is,
in a short exact sequence in $\mathscr{A}$,
if any two terms are in $\mathscr{T}$-$\id^{<\infty}$, then so is the third term.
\end{cor}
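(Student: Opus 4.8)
The plan is to read Corollary~\ref{cor-3.3'} off of Theorem~\ref{thm-3.2'} almost verbatim. First I would observe that the hypotheses of Theorem~\ref{thm-3.2'} are satisfied: by definition a coresolving subcategory $\mathscr{T}$ of $\mathscr{A}$ contains $\mathcal{I}(\mathscr{A})$ (Definition~\ref{def-2.2}(2.3)), and we are assuming $\mathscr{A}$ has enough injective objects. Hence, for \emph{any} short exact sequence $0\to A_1\to A_2\to A_3\to 0$ in $\mathscr{A}$, all three inequalities $(i)(a)$, $(ii)(a)$, $(iii)(a)$ of Theorem~\ref{thm-3.2'} are available.

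The remaining step is pure bookkeeping on finiteness, using that $\max$ of finitely many finite values and a shift by $\pm 1$ all preserve finiteness. Suppose two of the three $\mathscr{T}$-injective dimensions are finite. If $\mathscr{T}\text{-}\id A_1<\infty$ and $\mathscr{T}\text{-}\id A_3<\infty$, then $(i)(a)$ gives $\mathscr{T}\text{-}\id A_2\leq\max\{\mathscr{T}\text{-}\id A_1,\mathscr{T}\text{-}\id A_3\}<\infty$. If $\mathscr{T}\text{-}\id A_1<\infty$ and $\mathscr{T}\text{-}\id A_2<\infty$, then $(ii)(a)$ gives $\mathscr{T}\text{-}\id A_3\leq\max\{\mathscr{T}\text{-}\id A_1-1,\mathscr{T}\text{-}\id A_2\}<\infty$. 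Finally, if $\mathscr{T}\text{-}\id A_2<\infty$ and $\mathscr{T}\text{-}\id A_3<\infty$, then $(iii)(a)$ gives $\mathscr{T}\text{-}\id A_1\leq\max\{\mathscr{T}\text{-}\id A_2,\mathscr{T}\text{-}\id A_3+1\}<\infty$. In each case the third term lies in $\mathscr{T}\text{-}\id^{<\infty}$, which is exactly the two-out-of-three property.

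I expect no genuine obstacle at this stage: the substance has already been absorbed into Theorem~\ref{thm-3.2'}, whose proof is the formal dual (via Lemma~\ref{lem-3.1'}, the horseshoe lemma, and the pull-back/push-out diagrams) of the proof of Theorem~\ref{thm-3.2}. Indeed, one can bypass the inequalities entirely and argue that Corollary~\ref{cor-3.3'} is the statement of Corollary~\ref{cor-3.3} applied in the opposite category $\mathscr{A}^{\mathrm{op}}$, under which projective objects become injective objects, resolving subcategories become coresolving subcategories, and $\mathscr{T}\text{-}\pd$ becomes $\mathscr{T}\text{-}\id$; since the two-out-of-three property of a subcategory of short exact sequences is self-dual, it transfers back to $\mathscr{A}$ directly. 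Either route is short, so in the final write-up I would simply cite Theorem~\ref{thm-3.2'} and record the three one-line case checks above.
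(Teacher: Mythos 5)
Your proposal is correct and matches the paper's intent exactly: the paper states Corollary~\ref{cor-3.3'} as an immediate consequence of Theorem~\ref{thm-3.2'} (with no written proof, since the whole subsection is dual to Subsection~3.1), and your three one-line finiteness checks using $(i)(a)$, $(ii)(a)$, $(iii)(a)$ are precisely the intended argument.
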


The following result shows that if the coresolving subcategory $\mathscr{T}$
of $\mathscr{A}$ admits an $\mathscr{E}$-proper
generator $\mathscr{C}$, then any object in $\mathscr{A}$ with finite
$\mathscr{T}$-injective dimension is isomorphic to a kernel (respectively, a cokernel)
of a morphism from an object in $\mathscr{T}$ to an object in $\mathscr{A}$
with finite $\mathscr{C}$-injective dimension.

\begin{cor}\label{cor-3.4'}
Let $\mathscr{E}$ be a subcategory of $\mathscr{A}$. If $\mathscr{T}$ is an $\mathscr{E}$-preresolving
subcategory of $\mathscr{A}$ admitting an $\mathscr{E}$-proper generator $\mathscr{C}$,
then for any $M\in\mathscr{A}$ with $\mathscr{T}{\text -}\id M=n<\infty$, we have
\begin{enumerate}
\item[$(1)$] There exists an exact sequence
$$0\to T^{'}\to K^{'}\to T\to K\to 0$$
in $\mathscr{A}$ with $\mathscr{C}{\text -}\id K^{'}\leq n$, $\mathscr{C}{\text -}\id K\leq n-1$
and $T^{'},T\in\mathscr{T}$, such that $M\cong\Im(K^{'}\to T)$.
\item[$(2)$] If $\mathscr{A}$ has enough injective objects and
$\mathscr{T}$ is coresolving in $\mathscr{A}$, then the two ``$\leq$" in (1) are ``$=$".
\end{enumerate}
\end{cor}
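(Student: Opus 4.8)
The plan is to dualize the proof of Corollary \ref{cor-3.4}. Since Subsection 3.2 is the formal mirror of Subsection 3.1, every tool used there has its counterpart available here: Lemma \ref{lem-3.1'}(2) replaces Lemma \ref{lem-3.1}(2), the $\mathscr{E}$-proper generator property of $\mathscr{C}$ replaces the $\mathscr{E}$-coproper cogenerator property, a pull-back square replaces a push-out square, and Theorem \ref{thm-3.2'} replaces Theorem \ref{thm-3.2}. The case $n=0$ is handled directly: for $M\in\mathscr{T}$, the generator presentation $0\to T'\to C\to M\to 0$ with $C\in\mathscr{C}$ and $T'\in\mathscr{T}$ already has the desired shape (take $K=0$, $T=M$, $K'=C$), so I assume $n\geq 1$ below.

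For part (1), I would first invoke Lemma \ref{lem-3.1'}(2) to obtain a short exact sequence
$$0\to M\to T\to K\to 0\eqno{(\ast)}$$
with $T\in\mathscr{T}$ and $\mathscr{C}{\text -}\id K\leq n-1$, and then use that $\mathscr{C}$ is an $\mathscr{E}$-proper generator for $\mathscr{T}$ to produce a short exact sequence $0\to T'\to C\to T\to 0$ with $C\in\mathscr{C}$ and $T'\in\mathscr{T}$. Next I would form the pull-back $K':=C\times_T M$ of the epimorphism $C\to T$ along the monomorphism $M\to T$; this yields a $3\times 3$ commutative diagram with exact rows and columns whose middle column is $0\to T'\to C\to T\to 0$, whose bottom row is $(\ast)$, whose left column is
$$0\to T'\to K'\to M\to 0\eqno{(\ast\ast)}$$
and whose middle row is $0\to K'\to C\to K\to 0$. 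From this middle row, $C\in\mathscr{C}$ together with $\mathscr{C}{\text -}\id K\leq n-1$ forces $\mathscr{C}{\text -}\id K'\leq n$. Finally, splicing $(\ast\ast)$ with $(\ast)$ gives the required four-term exact sequence $0\to T'\to K'\to T\to K\to 0$, and since $K'\to M$ is epic while $M\to T$ is monic, $\Im(K'\to T)=\Im(M\to T)\cong M$.

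For part (2), assume in addition that $\mathscr{A}$ has enough injective objects and $\mathscr{T}$ is coresolving; as $\mathscr{C}\subseteq\mathscr{T}$, every $\mathscr{C}$-coresolution is also a $\mathscr{T}$-coresolution, so $\mathscr{C}{\text -}\id X\geq\mathscr{T}{\text -}\id X$ for all $X\in\mathscr{A}$. I would then apply Theorem \ref{thm-3.2'}(2)$(ii)$ to $(\ast)$ (here $\mathscr{T}{\text -}\id T=0$ and $\mathscr{T}{\text -}\id M=n\geq 1$, so the equality clause applies) to get $\mathscr{T}{\text -}\id K=n-1$, whence $\mathscr{C}{\text -}\id K=n-1$; and apply Theorem \ref{thm-3.2'}(2)$(i)$ to $(\ast\ast)$ (here $\mathscr{T}{\text -}\id T'=0$ and $\mathscr{T}{\text -}\id M=n$, so again the equality clause applies) to get $\mathscr{T}{\text -}\id K'=n$, whence $\mathscr{C}{\text -}\id K'=n$. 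This upgrades both occurrences of ``$\leq$'' in part (1) to ``$=$''.

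I do not anticipate a genuine obstacle: the statement is entirely a bookkeeping dualization of Corollary \ref{cor-3.4}. The two points that need care are orienting the middle square correctly, namely as the pull-back of $C\to T\leftarrow M$ (dual to the push-out used in Corollary \ref{cor-3.4}), and pairing the correct clauses of Theorem \ref{thm-3.2'} with the two short exact sequences, i.e.\ clause $(ii)$ with $(\ast)$ and clause $(i)$ with $(\ast\ast)$, mirroring the choices in the proof of Corollary \ref{cor-3.4}. I would cross-check both against that proof.
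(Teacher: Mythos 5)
Your proof is correct and is exactly the argument the paper intends: the paper omits the proof of Corollary \ref{cor-3.4'} as ``completely dual'' to that of Corollary \ref{cor-3.4}, and your dualization (Lemma \ref{lem-3.1'}(2) for the sequence $0\to M\to T\to K\to 0$, the generator presentation of $T$, the pull-back square in place of the push-out, and the clauses $(ii)$ and $(i)$ of Theorem \ref{thm-3.2'} for part (2)) matches it step for step. No gaps.
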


Furthermore, we get the following result.

\begin{cor} \label{cor-3.5'}
Assume that $\mathscr{A}$ has enough injective objects and
$\mathscr{T}$ is a coresolving subcategory of $\mathscr{A}$ admitting an $\mathscr{E}$-proper
generator $\mathscr{C}$. Then
\begin{enumerate}
\item[$(1)$] $\mathscr{T}{\text -}\FID\leq\mathscr{C}{\text -}\FID$.
\item[$(2)$] If $\mathscr{T}\subseteq{\mathscr{C}^{\bot}}$, then
$\mathscr{T}{\text -}\id M=\mathscr{C}{\text -}\id M$ for any $M\in\mathscr{A}$ with
$\mathscr{C}{\text -}\id M<\infty$.
\item[$(3)$] If $\mathscr{T}\subseteq{\mathscr{C}^{\bot}}$,
then $\mathscr{T}{\text -}\FID=\mathscr{C}{\text -}\FID$.
\end{enumerate}
\end{cor}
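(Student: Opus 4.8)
The plan is to transpose the proof of Corollary \ref{cor-3.5} across the projective/injective duality: $\pd$ becomes $\id$, kernels of epimorphisms become cokernels of monomorphisms, pull-backs become push-outs, ${}^{\bot}\mathscr{C}$ becomes $\mathscr{C}^{\bot}$, and the roles of Lemma \ref{lem-3.1}(2) and Corollary \ref{cor-3.4} are played by Lemma \ref{lem-3.1'}(2) and Corollary \ref{cor-3.4'}. One preliminary remark will be used throughout: since $\mathscr{C}\subseteq\mathscr{T}$, any $\mathscr{C}$-coresolution is a $\mathscr{T}$-coresolution, so $\mathscr{T}{\text -}\id N\leq\mathscr{C}{\text -}\id N$ for every $N\in\mathscr{A}$.

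For part (1), I would take $M\in\mathscr{A}$ with $\mathscr{T}{\text -}\id M=n<\infty$ and apply Corollary \ref{cor-3.4'}; because $\mathscr{A}$ has enough injective objects and $\mathscr{T}$ is coresolving, part (2) of that corollary provides $K'\in\mathscr{A}$ with $\mathscr{C}{\text -}\id K'=n<\infty$, whence every value of $\mathscr{T}{\text -}\id$ on $\mathscr{T}{\text -}\id^{<\infty}$ is realized as a finite $\mathscr{C}$-injective dimension, giving $\mathscr{T}{\text -}\FID\leq\mathscr{C}{\text -}\FID$.

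For part (2), let $M\in\mathscr{A}$ with $\mathscr{C}{\text -}\id M=n<\infty$ and set $m:=\mathscr{T}{\text -}\id M\leq n$. Applying Corollary \ref{cor-3.4'} to $M$ with its $\mathscr{T}$-injective dimension $m$, the relevant piece of the four-term sequence produced is a short exact sequence $0\to T'\to K'\to M\to 0$ with $T'\in\mathscr{T}$ and $\mathscr{C}{\text -}\id K'=m$. Since $\mathscr{T}\subseteq\mathscr{C}^{\bot}$, we have $\Ext^{\geq 1}_{\mathscr{A}}(C,T')=0$ for all $C\in\mathscr{C}$; dimension shifting along a finite $\mathscr{C}$-injective coresolution of $M$ then yields $\Ext^{\geq 1}_{\mathscr{A}}(M,T')=0$. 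Hence the sequence splits, $M$ is (isomorphic to) a direct summand of $K'$, and the dual of \cite[Corollary 3.9]{Hu2} gives $n=\mathscr{C}{\text -}\id M\leq\mathscr{C}{\text -}\id K'=m$; together with $m\leq n$ this forces $\mathscr{T}{\text -}\id M=m=n=\mathscr{C}{\text -}\id M$. Part (3) is then immediate: (2) shows $\mathscr{C}{\text -}\FID\leq\mathscr{T}{\text -}\FID$, and combining with (1) yields equality.

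The only non-formal point, and the one I expect to be the main thing to watch, is the Ext-vanishing $\Ext^{\geq 1}_{\mathscr{A}}(M,T')=0$: it is here that the finiteness hypothesis $\mathscr{C}{\text -}\id M<\infty$ in (2) and (3) is essential, and one must carry out the dimension shifting in the second variable so that exactly the condition $T'\in\mathscr{C}^{\bot}$ is what feeds it. Everything else follows the template of Subsection 3.1 with no new ideas.
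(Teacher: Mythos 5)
Your proposal is correct and is exactly the paper's intended argument: the paper explicitly omits the proofs in Subsection 3.2 as "completely dual" to those in Subsection 3.1, and your dualization of the proof of Corollary \ref{cor-3.5} (using Corollary \ref{cor-3.4'} in place of Corollary \ref{cor-3.4}, splitting $0\to T'\to K'\to M\to 0$ via $\Ext^1_{\mathscr{A}}(M,T')=0$, and the dual of \cite[Corollary 3.9]{Hu2}) carries it out faithfully. The only cosmetic quibble is that the dimension shifting is performed along the finite $\mathscr{C}$-coresolution of $M$ in the \emph{first} variable of $\Ext_{\mathscr{A}}(M,T')$, fed by the vanishing $\Ext^{\geq 1}_{\mathscr{A}}(\mathscr{C},T')=0$; your computation is the right one despite the slightly off phrasing.
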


\begin{prop} \label{prop-3.6'}
Let $\mathscr{E}$ and $\mathscr{C}$ be subcategories of $\mathscr{A}$.
If $\mathscr{E}^{\bot}\cap\widetilde{\res_{\mathscr{E}}\mathscr{C}}$
is closed under ($\mathscr{E}$-proper) extensions, then it is closed under cokernels
of monomorphisms. In particular, if $\res{\mathscr{C}}:=\widetilde{\res_{\mathcal{P}(\mathscr{A})}{\mathscr{C}}}$
is closed under extensions, then it is closed under cokernels of monomorphisms.
\end{prop}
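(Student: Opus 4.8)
The plan is to prove Proposition~\ref{prop-3.6'} as the precise categorical dual of Proposition~\ref{prop-3.6}: one replaces coresolutions by resolutions, ${^{\bot}\mathscr{E}}$ by $\mathscr{E}^{\bot}$, push-outs by pull-backs, and ``kernels of epimorphisms'' by ``cokernels of monomorphisms''. Concretely, I would start from an exact sequence $0\to X\to Y\to Z\to 0$ in $\mathscr{A}$ with $X,Y\in\mathscr{E}^{\bot}\cap\widetilde{\res_{\mathscr{E}}\mathscr{C}}$ and show that $Z$ lies in the same class. Since $Y$ belongs to this class, there is a $\Hom_{\mathscr{A}}(\mathscr{E},-)$-exact exact sequence $0\to Y'\to C\to Y\to 0$ with $C\in\mathscr{C}$ and $Y'\in\mathscr{E}^{\bot}\cap\widetilde{\res_{\mathscr{E}}\mathscr{C}}$; this is the dual of the $\mathscr{E}$-coproper cogenerator sequence used in the proof of Proposition~\ref{prop-3.6}.

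Next I would form the pull-back $T=X\times_{Y}C$ of the monomorphism $X\to Y$ along the epimorphism $C\to Y$, producing the commutative diagram
$$\xymatrix{
& & 0 \ar[d] & 0 \ar[d] & \\
0 \ar[r] & Y' \ar@{==}[d] \ar[r] & T \ar[d] \ar[r] & X \ar[d] \ar[r] & 0 \\
0 \ar[r] & Y' \ar[r] & C \ar[d] \ar[r] & Y \ar[d] \ar[r] & 0 \\
& & Z \ar@{==}[r] \ar[d] & Z \ar[d] & \\
& & 0 & 0 &
}$$
whose non-degenerate rows are $0\to Y'\to T\to X\to 0$ (with $T\to X$ the base change of the epimorphism $C\to Y$) and the chosen generator sequence $0\to Y'\to C\to Y\to 0$, and whose non-degenerate columns are $0\to T\to C\to Z\to 0$ (with $T\to C$ the base change of the monomorphism $X\to Y$) and the given sequence $0\to X\to Y\to Z\to 0$. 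By (the dual of) \cite[Lemma~2.4(1)]{Hu1}, every row and column of this diagram is a $\Hom_{\mathscr{A}}(\mathscr{E},-)$-exact exact sequence.

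From here the proof closes quickly. Applying the hypothesis that $\mathscr{E}^{\bot}\cap\widetilde{\res_{\mathscr{E}}\mathscr{C}}$ is closed under ($\mathscr{E}$-proper) extensions to the $\Hom_{\mathscr{A}}(\mathscr{E},-)$-exact sequence $0\to Y'\to T\to X\to 0$ gives $T\in\mathscr{E}^{\bot}\cap\widetilde{\res_{\mathscr{E}}\mathscr{C}}$. Then, splicing a $\Hom_{\mathscr{A}}(\mathscr{E},-)$-exact $\mathscr{C}$-resolution of $T$ onto the monomorphism $T\to C$ in $0\to T\to C\to Z\to 0$ yields a $\Hom_{\mathscr{A}}(\mathscr{E},-)$-exact $\mathscr{C}$-resolution of $Z$, so that $Z\in\widetilde{\res_{\mathscr{E}}\mathscr{C}}$. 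Moreover, the long exact $\Ext_{\mathscr{A}}^{*}(E,-)$-sequence of $0\to Y'\to C\to Y\to 0$ forces $C\in\mathscr{E}^{\bot}$ (its two outer terms lie in $\mathscr{E}^{\bot}$), and then the same sequence applied to $0\to T\to C\to Z\to 0$ --- using its $\Hom_{\mathscr{A}}(\mathscr{E},-)$-exactness to kill the connecting map into $\Ext_{\mathscr{A}}^{1}(E,T)$ --- gives $Z\in\mathscr{E}^{\bot}$. Hence $Z\in\mathscr{E}^{\bot}\cap\widetilde{\res_{\mathscr{E}}\mathscr{C}}$, so the class is closed under cokernels of monomorphisms. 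The ``in particular'' assertion is the special case $\mathscr{E}=\mathcal{P}(\mathscr{A})$: there $\mathcal{P}(\mathscr{A})^{\bot}=\mathscr{A}$, every exact sequence is $\Hom_{\mathscr{A}}(\mathcal{P}(\mathscr{A}),-)$-exact, $\widetilde{\res_{\mathcal{P}(\mathscr{A})}\mathscr{C}}=\res\mathscr{C}$, and ``$\mathscr{E}$-proper extensions'' are just extensions.

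I do not anticipate a genuine obstacle, since the statement is formally dual to Proposition~\ref{prop-3.6}, whose proof is already available. The only points needing care are bookkeeping ones: orienting the pull-back diagram so that exactly the short exact sequences $0\to Y'\to T\to X\to 0$ and $0\to T\to C\to Z\to 0$ appear in it, and checking that $\Hom_{\mathscr{A}}(\mathscr{E},-)$-exactness --- not merely exactness --- is inherited by every row and column, which is where \cite[Lemma~2.4(1)]{Hu1} is used; together with the small observation that the middle term of a $\Hom_{\mathscr{A}}(\mathscr{E},-)$-exact sequence whose outer terms lie in $\mathscr{E}^{\bot}$ again lies in $\mathscr{E}^{\bot}$.
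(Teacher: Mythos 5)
Your proof is correct and is precisely what the paper intends: the paper gives no proof of this proposition, stating that everything in Subsection 3.2 is dual to Subsection 3.1, and your pull-back argument is the exact mirror of the push-out argument used for Proposition 3.6 (including the closing of the two short exact sequences $0\to Y'\to T\to X\to 0$ and $0\to T\to C\to Z\to 0$ and the appeal to the dual of the cited lemma from \cite{Hu1} for $\Hom_{\mathscr{A}}(\mathscr{E},-)$-exactness of all rows and columns). The one step you inherit from the paper without comment is the assertion that the first syzygy $Y'$ of a $\Hom_{\mathscr{A}}(\mathscr{E},-)$-exact $\mathscr{C}$-resolution of $Y$ again lies in $\mathscr{E}^{\bot}$, which --- exactly as in the paper's own proof of Proposition 3.6 --- tacitly uses $\mathscr{C}\subseteq\mathscr{E}^{\bot}$, an assumption satisfied in all of the paper's applications.
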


\begin{prop} \label{prop-3.7'}
Let $\mathscr{E}$ be a subcategory of $\mathscr{A}$. If $\mathscr{T}$ is an
$\mathscr{E}$-preresolving subcategory of $\mathscr{A}$ admitting an $\mathscr{E}$-proper
generator $\mathscr{C}$, then
$\widetilde{\res_{\mathscr{E}}\mathscr{C}}=\widetilde{\res_{\mathscr{E}}\mathscr{T}}$.
\end{prop}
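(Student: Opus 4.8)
The statement is dual to Proposition \ref{prop-3.7}, and the plan is to transport that proof to the opposite category, making the following substitutions throughout: ``$\Hom_{\mathscr{A}}(-,\mathscr{E})$-exact'' becomes ``$\Hom_{\mathscr{A}}(\mathscr{E},-)$-exact'', ``push-out diagram'' becomes ``pull-back diagram'', ``$\mathscr{E}$-coproper cogenerator'' becomes ``$\mathscr{E}$-proper generator'', and a defining sequence $0\to T\to C\to T^{'}\to 0$ of the cogenerator becomes a defining sequence $0\to T^{'}\to C\to T\to 0$ of the generator.

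First I would dispose of the inclusion $\widetilde{\res_{\mathscr{E}}\mathscr{C}}\subseteq\widetilde{\res_{\mathscr{E}}\mathscr{T}}$, which is immediate since $\mathscr{C}\subseteq\mathscr{T}$. For the reverse inclusion, take $M\in\widetilde{\res_{\mathscr{E}}\mathscr{T}}$ and fix a $\Hom_{\mathscr{A}}(\mathscr{E},-)$-exact exact sequence $0\to M^{'}\to T_0\to M\to 0$ with $T_0\in\mathscr{T}$ and $M^{'}\in\widetilde{\res_{\mathscr{E}}\mathscr{T}}$. Since $\mathscr{C}$ is an $\mathscr{E}$-proper generator for $\mathscr{T}$, there is a $\Hom_{\mathscr{A}}(\mathscr{E},-)$-exact exact sequence $0\to T_0^{'}\to C_0\to T_0\to 0$ with $C_0\in\mathscr{C}$ and $T_0^{'}\in\mathscr{T}$. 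Forming the pull-back of the epimorphism $C_0\to T_0$ along the monomorphism $M^{'}\to T_0$ produces a commutative diagram with $\Hom_{\mathscr{A}}(\mathscr{E},-)$-exact exact rows and columns (by \cite[Lemma 2.4]{Hu1}), in particular the exact sequences $0\to M^1\to C_0\to M\to 0$ and $0\to T_0^{'}\to M^1\to M^{'}\to 0$.

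Next I would verify that $M^1\in\widetilde{\res_{\mathscr{E}}\mathscr{T}}$. Since $M^{'}\in\widetilde{\res_{\mathscr{E}}\mathscr{T}}$, fix a $\Hom_{\mathscr{A}}(\mathscr{E},-)$-exact exact sequence $0\to M^{''}\to T^{''}\to M^{'}\to 0$ with $T^{''}\in\mathscr{T}$ and $M^{''}\in\widetilde{\res_{\mathscr{E}}\mathscr{T}}$, and pull back $M^1\to M^{'}$ along $T^{''}\to M^{'}$; the resulting object $T^1$ then fits into $\Hom_{\mathscr{A}}(\mathscr{E},-)$-exact exact sequences $0\to T_0^{'}\to T^1\to T^{''}\to 0$ and $0\to M^{''}\to T^1\to M^1\to 0$ (again by \cite[Lemma 2.4]{Hu1}). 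From the first one, together with $T_0^{'},T^{''}\in\mathscr{T}$ and the fact that $\mathscr{T}$ is closed under $\mathscr{E}$-proper extensions, we get $T^1\in\mathscr{T}$; splicing the second one with a $\Hom_{\mathscr{A}}(\mathscr{E},-)$-exact $\mathscr{T}$-resolution of $M^{''}$ shows $M^1\in\widetilde{\res_{\mathscr{E}}\mathscr{T}}$. Now I repeat the previous paragraph with $M^1$ in place of $M$, obtaining a $\Hom_{\mathscr{A}}(\mathscr{E},-)$-exact exact sequence $0\to M^2\to C_1\to M^1\to 0$ with $C_1\in\mathscr{C}$ and $M^2\in\widetilde{\res_{\mathscr{E}}\mathscr{T}}$, and iterate. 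Splicing the sequences $0\to M^{i+1}\to C_i\to M^i\to 0$ produces a $\Hom_{\mathscr{A}}(\mathscr{E},-)$-exact exact sequence $\cdots\to C_i\to\cdots\to C_1\to C_0\to M\to 0$ with all $C_i\in\mathscr{C}$, whence $M\in\widetilde{\res_{\mathscr{E}}\mathscr{C}}$ and $\widetilde{\res_{\mathscr{E}}\mathscr{T}}\subseteq\widetilde{\res_{\mathscr{E}}\mathscr{C}}$.

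I do not expect any conceptual obstacle, since the argument is a line-by-line dualization of Proposition \ref{prop-3.7}. The one point deserving care is the bookkeeping of the pull-backs: they must be set up so that each new syzygy $M^i$ again admits a $\Hom_{\mathscr{A}}(\mathscr{E},-)$-exact presentation with middle term in $\mathscr{T}$, so that the construction can be iterated; and one must check that every pull-back square built along the way is componentwise $\Hom_{\mathscr{A}}(\mathscr{E},-)$-exact, which is exactly what \cite[Lemma 2.4]{Hu1} supplies. All the remaining closure properties invoked are the defining ones of an $\mathscr{E}$-preresolving subcategory admitting an $\mathscr{E}$-proper generator.
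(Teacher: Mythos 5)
Your proof is correct and is exactly what the paper intends: the paper omits the proof of Proposition \ref{prop-3.7'}, stating that all results in Subsection 3.2 are proved by dualizing Subsection 3.1, and your argument is precisely that line-by-line dualization of the proof of Proposition \ref{prop-3.7} (with push-outs replaced by pull-backs and the cogenerator sequence replaced by the generator sequence).
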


\section {\bf Applications to module categories}

In this section, all rings are associative rings with unit and all modules are unital.
For a ring $R$, we use $\Mod R$ to denote the category of left $R$-modules and use $\mod R$ to
denote the category of finitely generated left $R$-modules.

\begin{df} \label{def-4.1} {\rm (\cite{ATY,HW}). Let $R$ and $S$ be arbitrary rings.
An ($R$-$S$)-bimodule $_RC_S$ is called
\textit{semidualizing} if the following conditions are satisfied.
\begin{enumerate}
\item[(a1)] $_RC$ admits a degreewise finite $R$-projective resolution.
\item[(a2)] $C_S$ admits a degreewise finite $S^{op}$-projective resolution.
\item[(b1)] The homothety map $_RR_R\stackrel{_R\gamma}{\rightarrow} \Hom_{S^{op}}(C,C)$ is an isomorphism.
\item[(b2)] The homothety map $_SS_S\stackrel{\gamma_S}{\rightarrow} \Hom_{R}(C,C)$ is an isomorphism.
\item[(c1)] $\Ext_{R}^{\geq 1}(C,C)=0$.
\item[(c2)] $\Ext_{S^{op}}^{\geq 1}(C,C)=0$.
\end{enumerate}}
\end{df}

Wakamatsu \cite{W1} introduced and studied the so-called {\it generalized tilting modules},
which are usually called {\it Wakamatsu tilting modules}, see \cite{BR, MR}. Note that
a bimodule $_RC_S$ is semidualizing if and only if it is Wakamatsu tilting (\cite[Corollary 3.2]{W3}).
Typical examples of semidualizing bimodules include the free module
of rank one and the dualizing module over a Cohen-Macaulay local ring.
More examples of semidualizing bimodules are referred to \cite{HW,TH1,W2}.

From now on, $R$ and $S$ are arbitrary rings and we fix a semidualizing bimodule $_RC_S$.
We write $(-)_*:=\Hom(C,-)$, and write
$$\mathcal{P}_C(R):=\{C\otimes_SP\mid P\ {\rm \ is\ projective\ in}\ \Mod S\},$$
$$\mathcal{F}_C(R):=\{C\otimes_SF\mid F\ {\rm \ is\ flat\ in}\ \Mod S\},$$
$$\mathcal{I}_C(R^{op}):=\{I_*\mid I\ {\rm \ is\ injective\ in}\ \Mod S^{op}\}.$$
The modules in $\mathcal{P}_C(R)$, $\mathcal{F}_C(R)$ and $\mathcal{I}_C(R^{op})$ are called {\it $C$-projective},
{\it $C$-flat} and {\it $C$-injective} respectively.
When ${_RC_S}={_RR_R}$, $C$-projective, $C$-flat and $C$-injective modules are exactly
projective, flat and injective modules respectively.

Let $\mathscr{B}$ be a subcategory of $\Mod R^{op}$. Recall that a sequence in $\Mod R$
is called {\it $(\mathscr{B}\otimes_R-)$-exact} if it is exact after applying the functor
$B\otimes_R-$ for any $B\in\mathscr{B}$. We write
$$\mathscr{B}^{\top}:=\{M\in\Mod R\mid \Tor_{\geq 1}^R(B,M)=0 {\rm\ for\ any}\ B\in\mathscr{B}\}.$$
The following notions were introduced by Holm and J$\phi$gensen \cite{HJ} for
commutative rings. The following are their non-commutative versions.

\begin{df} \label{def-4.2}
{\rm \begin{enumerate}
\item[]
\item[(1)] A module $M\in\Mod R$ is called {\it $C$-Gorenstein projective} if
$M\in{^{\bot}\mathcal{P}_C(R)}$ and there exists a $\Hom_R(-,\mathcal{P}_C(R))$-exact exact sequence
$$0\rightarrow M\rightarrow G^0\rightarrow G^1\rightarrow \cdots\rightarrow G^i\rightarrow \cdots$$
in $\Mod R$ with all $G^i$ in $\mathcal{P}_C(R)$.
\item[(2)] A module $M\in\Mod R$ is called {\it $C$-Gorenstein flat} if
$M\in\mathcal{I}_C(R^{op})^{\top}$ and there exists an $(\mathcal{I}_C(R^{op})\otimes_R-)$-exact exact sequence
$$0\rightarrow M\rightarrow Q^0\rightarrow Q^1\rightarrow \cdots\rightarrow Q^i\rightarrow \cdots$$
in $\Mod R$ with all $Q^i$ in $\mathcal{F}_C(R)$.
\item[(3)] A module $N\in\Mod R^{op}$ is called {\it $C$-Gorenstein injective} if
$N\in\mathcal{I}_C(R^{op})^{\bot}$ and there exists a $\Hom_{R^{op}}(\mathcal{I}_C(R^{op}),-)$-exact exact sequence
$$\cdots\rightarrow E_i\rightarrow \cdots\rightarrow E_1\rightarrow E_0\rightarrow N\rightarrow 0$$
in $\Mod R^{op}$ with all $E_i$ in $\mathcal{I}_C(R^{op})$.
\end{enumerate}}
\end{df}

We use $\mathcal{GP}_C(R)$ (resp. $\mathcal{GF}_C(R)$) to denote the subcategory of $\Mod R$
consisting of $C$-Gorenstein projective (resp. flat) modules, and use $\mathcal{GI}_C(R^{op})$
to denote the subcategory of $\Mod R^{op}$ consisting of $C$-Gorenstein injective modules.
When $_RC_S={_RR_R}$, $C$-Gorenstein projective, flat and injective modules are
exactly Gorenstein projective, flat and injective modules respectively (\cite{EJ,H});
in this case, we write
$$\mathcal{P}(R):=\mathcal{P}_C(R),\ \ \mathcal{I}(R^{op}):=\mathcal{I}_C(R^{op}),\ \ \mathcal{F}(R):=\mathcal{F}_C(R),$$
$$\mathcal{GP}(R):=\mathcal{GP}_C(R),\ \ \mathcal{GI}(R^{op}):=\mathcal{GI}_C(R^{op}),\ \ \mathcal{GF}(R):=\mathcal{GF}_C(R).$$

\begin{df} \label{def-4.3}
{\rm (\cite{HW})
\begin{enumerate}
\item[(1)] The {\it Auslander class} $\mathcal{A}_{C}(R^{op})$ with respect to $C$ consists of all modules $N$
in $\Mod R^{op}$ satisfying the following conditions.
\begin{enumerate}
\item[(a1)] $\Tor_{\geq 1}^R(N,C)=0$.
\item[(a2)] $\Ext_{S^{op}}^{\geq 1}(C,N\otimes_RC)=0$.
\item[(a3)] The canonical evaluation homomorphism
$$\mu_N:N\rightarrow (N\otimes_RC)_*$$
defined by $\mu_N(x)(c)=x\otimes c$ for any $x\in N$ and $c\in C$ is an isomorphism in $\Mod R^{op}$.
\end{enumerate}
\item[(2)] The {\it Bass class} $\mathcal{B}_C(R)$ with respect to $C$ consists of all modules $M$
in $\Mod R$ satisfying the following conditions.
\begin{enumerate}
\item[(b1)] $\Ext_R^{\geq 1}(C,M)=0$.
\item[(b2)] $\Tor_{\geq 1}^S(C,M_*)=0$.
\item[(b3)] The canonical evaluation homomorphism
$$\theta_M:C\otimes_SM_*\rightarrow M$$
defined by $\theta_M(c\otimes f)=f(c)$ for any $c\in C$ and $f\in M_*$ is an isomorphism in $\Mod R$.
\end{enumerate}
\end{enumerate}}
\end{df}

For a subcategory $\mathscr{X}$ of $\Mod R$ (or $\Mod R^{op}$), we write
$$\mathscr{X}^+:=\{X^+\mid X\in\mathscr{X}\},$$
where $(-)^+=\Hom_{\mathbb{Z}}(-,\mathbb{Q}/\mathbb{Z})$
with $\mathbb{Z}$ the additive group of integers and $\mathbb{Q}$ the additive group of rational numbers.
For simplicity, we write
$$\widetilde{\res\mathscr{C}}:=\widetilde{\res_{\mathscr{C}}\mathscr{C}}\ \ \ {\rm and}\ \ \
\widetilde{\cores\mathscr{C}}:=\widetilde{\cores_{\mathscr{C}}\mathscr{C}}.$$

In the following, we present a partial list of examples of how the results obtained in Section 3 can be applied.

\begin{rem} \label{rem-4.4}
{\rm \begin{enumerate}
\item[]
\item[(1)]
It is well known that $\mathcal{P}(R)$ and $\mathcal{F}(R)$ are resolving and
$\mathcal{I}(R)$ is coresolving in $\Mod R$.

\item[(2)]
Let $(\mathscr{U},\mathscr{V})$ be a hereditary cotorsion pair in $\Mod R$, and let
$\mathscr{C}:=\mathscr{U}\cap\mathscr{V}$ be its {\it kernel}. Then
\begin{enumerate}
\item[(a)]
$\mathscr{U}$ is resolving in $\Mod R$ admitting a $\mathscr{C}$-coproper cogenerator
$\mathscr{C}$ (\cite[Lemma 4.4]{SZH1}).
\item[(b)] Dually, $\mathscr{V}$ is coresolving in $\Mod R$
admitting a $\mathscr{C}$-proper generator $\mathscr{C}$.
\end{enumerate}

\item[(3)]
\begin{enumerate}
\item[(a)]
$$\mathcal{GP}_C(R)={^{\bot}\mathcal{P}_C(R)}\cap\widetilde{\cores\mathcal{P}_C(R)}$$
is resolving in $\Mod R$ admitting a $\mathcal{P}_C(R)$-coproper cogenerator $\mathcal{P}_C(R)$
(\cite[Example 3.2(2) and Proposition 3.3]{SZH1}). In particular,
$$\mathcal{GP}(R)={^{\bot}\mathcal{P}(R)}\cap\widetilde{\cores\mathcal{P}(R)}$$
is resolving in $\Mod R$ admitting a $\mathcal{P}(R)$-coproper cogenerator $\mathcal{P}(R)$.
\item[(b)] Dually,
$$\mathcal{GI}_C(R^{op})={\mathcal{I}_C(R^{op})^{\bot}}\cap\widetilde{\res\mathcal{I}_C(R^{op})}$$
is coresolving in $\Mod R^{op}$ admitting an $\mathcal{I}_C(R^{op})$-proper generator $\mathcal{I}_C(R^{op})$
(\cite[Example 3.2(2) and the dual of Proposition 3.3]{SZH1}). In particular,
$$\mathcal{GI}(R^{op})={\mathcal{I}(R^{op})^{\bot}}\cap\widetilde{\res\mathcal{I}(R^{op})}$$
is coresolving in $\Mod R^{op}$ admitting an $\mathcal{I}(R^{op})$-proper generator $\mathcal{I}(R^{op})$.
\item[(c)] Let $R$ be a left and right Noetherian ring, and let $p(R)$ be the subcategory of $\mod R$
consisting of projective modules.
Recall that a module $M\in\mod R$ is said to {\it have Gorenstein dimension zero} \cite{AB}
or be {\it totally reflexive} \cite{AM} if $M\in\mathcal{G}p(R)$, where
$$\mathcal{G}p(R)={^{\bot}{_RR}}\cap\widetilde{\cores p(R)},$$
which is resolving in $\mod R$ admitting a $p(R)$-coproper cogenerator $p(R)$.
\end{enumerate}

\item[(4)]
\begin{enumerate}
\item[(a)] Recall from \cite{DLM} that a module $M\in\Mod R$ is called {\it strongly Gorenstein flat}
if $M\in\mathcal{SGF}(R)$, where
$$\mathcal{SGF}(R)={^{\bot}\mathcal{F}(R)}\cap\widetilde{\cores_{\mathcal{F}(R)}\mathcal{P}(R)}.$$
It is trivial that ${^{\bot}\mathcal{F}(R)}$ is closed under extensions.
By the dual version of \cite[Lemma 8.2.1]{EJ} (cf. \cite[Horseshoe Lemma 1.7]{H}),
it is easy to see that $\mathcal{SGF}(R)$ is closed under extensions.
It follows from Proposition \ref{prop-3.6} that
$\mathcal{SGF}(R)$ is resolving in $\Mod R$ admitting an $\mathcal{F}(R)$-coproper cogenerator $\mathcal{P}(R)$,
which generalizes \cite[Proposition 2.10(1)(2)]{DLM}.

\item[(b)] Recall from \cite{M,St} that a module $M\in\Mod R$ is called
{\it FP-injective} (or {\it absolutely pure}) if $M\in\mathcal{FI}(R)$, where
$\mathcal{FI}(R):=\{M\in\Mod R\mid\Ext_R^1(X,M)=0$ for all finitely presented
left $R$-modules $X\}$. Recall from \cite{MD} that a module $M\in\Mod R$ is called
{\it Gorenstein FP-injective} if $M\in\mathcal{GFI}(R)$, where
$$\mathcal{GFI}(R)={\mathcal{FI}(R)^{\bot}}\cap\widetilde{\res_{\mathcal{FI}(R)}\mathcal{I}(R)}.$$
It is trivial that ${\mathcal{FI}(R)^{\bot}}$ is closed under extensions.
By \cite[Lemma 8.2.1]{EJ}, it is easy to see that $\mathcal{GFI}(R)$ is closed under extensions.
It follows from Proposition \ref{prop-3.6'} that
$\mathcal{GFI}(R)$ is coresolving in $\Mod R$ admitting an $\mathcal{FI}(R)$-proper generator $\mathcal{I}(R)$,
which generalizes \cite[Proposition 2.6(1)(2)]{MD}.
\end{enumerate}

\item[(5)]
\begin{enumerate}
\item[(a)] Recall from \cite{BGH} that a module $M\in\Mod R$ is called {\it level}
if $M\in\mathcal{L}(R)$, where $\mathcal{L}(R)=\{M\in\Mod R\mid\Tor^R_1(X,M)=0$ for all
right $R$-modules $X$ admitting a degreewise finite $R^{op}$-projective resolution$\}$;
also recall that a module $M\in\Mod R$ is called
{\it Gorenstein AC-projective} if $M\in\mathcal{GP}_{ac}(R)$, where
$$\mathcal{GP}_{ac}(R)={^{\bot}\mathcal{L}(R)}\cap\widetilde{\cores_{\mathcal{L}(R)}\mathcal{P}(R)}.$$
By \cite[Lemma 8.6]{BGH}, we have that
$\mathcal{GP}_{ac}(R)$ is resolving in $\Mod R$ admitting a $\mathcal{L}(R)$-coproper cogenerator $\mathcal{P}(R)$.

\item[(b)] Recall from \cite{BGH} that a module $M\in\Mod R$ is called {\it absolutely clean}
if $M\in\mathcal{AC}(R)$, where $\mathcal{AC}(R)=\{M\in\Mod R\mid\Ext_R^1(X,M)=0$ for all
left $R$-modules $X$ admitting a degreewise finite $R$-projective resolution$\}$;
also recall that a module $M\in\Mod R$ is called
{\it Gorenstein AC-injective} if $M\in\mathcal{GI}_{ac}(R)$, where
$$\mathcal{GI}_{ac}(R)={\mathcal{AC}(R)^{\bot}}\cap\widetilde{\res_{\mathcal{AC}(R)}\mathcal{I}(R)}.$$
By \cite[Lemma 5.6]{BGH}, we have that
$\mathcal{GI}_{ac}(R)$ is coresolving in $\Mod R$ admitting an $\mathcal{AC}(R)$-proper generator $\mathcal{I}(R)$.
\end{enumerate}

\item[(6)]
\begin{enumerate}
\item[(a)]
$$\mathcal{A}_C(R^{op})={^{\bot}\mathcal{I}_C(R^{op})}\cap\widetilde{\cores\mathcal{I}_C(R^{op})},$$
which is resolving in $\Mod R^{op}$ admitting an $\mathcal{I}_C(R^{op})$-coproper cogenerator $\mathcal{I}_C(R^{op})$
(\cite[Example 3.2(2) and Proposition 3.3]{SZH1}; also cf. \cite[Theorem 2]{HW}).
\item[(b)] Dually,
$$\mathcal{B}_C(R)={\mathcal{P}_C(R)^{\bot}}\cap\widetilde{\res\mathcal{P}_C(R)},$$
which is coresolving in $\Mod R$ admitting a $\mathcal{P}_C(R)$-proper generator $\mathcal{P}_C(R)$
(\cite[Example 3.2(2) and the dual of Proposition 3.3]{SZH1}; also cf. \cite[Theorem 6.1]{HW}).
\end{enumerate}

\item[(7)]
Let $\mathscr{B}$ be a subcategory of $\Mod R^{op}$. Recall from \cite{EIP} that a module $M\in\Mod R$ is called
{\it Gorenstein $\mathscr{B}$-flat} (respectively, {\it projectively coresolved Gorenstein $\mathscr{B}$-flat})
if $M\in\mathscr{B}^{\top}$ and there exists a $(\mathscr{B}\otimes_R-)$-exact
exact sequence
$$0\rightarrow M\rightarrow Q^0\rightarrow Q^1\rightarrow \cdots\rightarrow Q^i\rightarrow \cdots$$
in $\Mod R$ with all $Q^i$ in $\mathcal{F}(R)$ (respectively, $\mathcal{P}(R)$).
We use $\mathcal{GF}_{\mathscr{B}}(R)$ (respectively, $\mathcal{PGF}_{\mathscr{B}}(R)$) to denote
the subcategory of $\Mod R$ consisting of Gorenstein $\mathscr{B}$-flat modules
(respectively, projectively coresolved Gorenstein $\mathscr{B}$-flat modules).

Also recall from \cite{EIP} that $\mathscr{B}$ is {\it semi-definable} if $\mathscr{B}$ is closed under
direct products and its definable closure $<\mathscr{B}>$ (the smallest subcategory of $\Mod R^{op}$
containing $\mathscr{B}$ which is closed under direct products, direct limits and pure submodules) contains
a pure injective module $D$ such that any module in $<\mathscr{B}>$ is a pure submodule of some direct product
of copies of $D$.

Let $B\in\Mod R^{op}$, $M\in\Mod R$ and $n\geq 1$. By \cite[Lemma 2.16(a)(b)]{GT}, we have
$$(B\otimes_R-)^+\cong\Hom_{R}(-,B^+),\eqno{(4.1)}$$
$$[\Tor_n^R(B,M)]^+\cong\Ext_{R}^n(M,B^+).\eqno{(4.2)}$$
It yields that
$$\mathcal{GF}_{\mathscr{B}}(R)={^{\bot}(\mathscr{B}^+)}\cap\widetilde{\cores_{\mathscr{B}^+}\mathcal{F}(R)},$$
$$\mathcal{PGF}_{\mathscr{B}}(R)={^{\bot}(\mathscr{B}^+)}\cap\widetilde{\cores_{\mathscr{B}^+}\mathcal{P}(R)}.$$
By \cite[Theorem 2.8]{EIP}, we have that $\mathcal{PGF}_{\mathscr{B}}(R)$ is resolving in $\Mod R$ admitting an
$\mathcal{I}_C(R^{op})^+$-coproper cogenerator $\mathcal{P}(R)$.
When $\mathscr{B}=\mathcal{I}(R^{op})$,
projectively coresolved Gorenstein $\mathscr{B}$-flat modules are called {\it projectively coresolved Gorenstein flat}
(\cite{SS}); in this case, we write $\mathcal{PGF}(R):=\mathcal{PGF}_{\mathscr{B}}(R)$. We have
$\mathcal{P}(R)\subseteq\mathcal{PGF}(R)=\mathcal{SGF}(R)(R)\cap\mathcal{GF}(R)$ (\cite[Lemma 3]{I}).

On the other hand,
it follows from \cite[Theorem 2.12 and Corollary 2.14]{EIP} that if $\mathscr{B}$ is semi-definable,
then $\mathcal{GF}_{\mathscr{B}}(R)$ is resolving in $\Mod R$ admitting a $\mathscr{B}^+$-coproper cogenerator $\mathcal{F}(R)$.
In particular, $\mathcal{GF}(R)$ is resolving in $\Mod R$ admitting an $\mathcal{I}_C(R^{op})^+$-coproper cogenerator
$\mathcal{F}(R)$ (also cf. \cite[Theorem 4.11]{SS}).

\item[(8)]
By (4.1) and (4.2), we have that
$$\mathcal{GF}_C(R)={^{\bot}(\mathcal{I}_C(R^{op})^+)}\cap\widetilde{\cores_{\mathcal{I}_C(R^{op})^+}\mathcal{F}_C(R)},$$
which admits an $\mathcal{I}_C(R^{op})^+$-coproper cogenerator $\mathcal{F}_C(R)$. It is trivial that
$\mathcal{P}(R)\subseteq\mathcal{F}(R)\subseteq\mathcal{GF}_C(R)$.
By Proposition \ref{prop-3.6}, we have that if $\mathcal{GF}_C(R)$ is closed under
extensions, then it is resolving in $\Mod R$.
\end{enumerate}}
\end{rem}

\subsection{Finitistic dimensions}

In this subsection, $R$ is an arbitrary associative ring.

By Corollaries \ref{cor-3.5} and \ref{cor-3.5'} and Remark \ref{rem-4.4}(2), we immediately get the following result.

\begin{cor} \label{cor-4.5}
Let $(\mathscr{U},\mathscr{V})$ be a hereditary cotorsion pair in $\Mod R$ with
the kernel $\mathscr{C}$. Then
\begin{enumerate}
\item[$(1)$] For any $M\in\Mod R$ with $\mathscr{C}{\text -}\pd M<\infty$, we have
$$\mathscr{U}{\text -}\pd M=\mathscr{C}{\text -}\pd M.$$ Moreover, we have
$$\mathscr{U}{\text -}\FPD=\mathscr{C}{\text -}\FPD.$$
\item[$(2)$] For any $M\in\Mod R$ with $\mathscr{C}{\text -}\id M<\infty$, we have
$$\mathscr{V}{\text -}\id M=\mathscr{C}{\text -}\id M.$$ Moreover, we have
$$\mathscr{V}{\text -}\FID=\mathscr{C}{\text -}\FID.$$
\end{enumerate}
\end{cor}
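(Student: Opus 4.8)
The statement is a direct application of the machinery of Subsection 3.1 together with Remark \ref{rem-4.4}(2), so the plan is mostly one of bookkeeping: identify the right instances of $\mathscr{A}$, $\mathscr{T}$, $\mathscr{E}$, $\mathscr{C}$ and check the two hypotheses of Corollary \ref{cor-3.5}. For part (1), I would take $\mathscr{A}=\Mod R$ (which has enough projective objects), $\mathscr{T}=\mathscr{U}$, and $\mathscr{E}=\mathscr{C}$. By Remark \ref{rem-4.4}(2)(a), $\mathscr{U}$ is a resolving subcategory of $\Mod R$ admitting $\mathscr{C}$ as a $\mathscr{C}$-coproper cogenerator, so the structural hypothesis of Corollary \ref{cor-3.5} is met with $\mathscr{E}=\mathscr{C}$.

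The only remaining point is the $\Ext$-vanishing condition $\mathscr{T}\subseteq{^{\bot}\mathscr{C}}$, i.e. $\mathscr{U}\subseteq{^{\bot}\mathscr{C}}$. This is where the hypothesis that the cotorsion pair is \emph{hereditary} is used: by Definition \ref{def-2.2}(2.1) we have $\Ext_{\Mod R}^{\geq 1}(U,V)=0$ for all $U\in\mathscr{U}$ and $V\in\mathscr{V}$; since $\mathscr{C}=\mathscr{U}\cap\mathscr{V}\subseteq\mathscr{V}$, it follows that $\Ext_{\Mod R}^{\geq 1}(U,C)=0$ for all $U\in\mathscr{U}$ and $C\in\mathscr{C}$, that is, $\mathscr{U}\subseteq{^{\bot}\mathscr{C}}$. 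Now Corollary \ref{cor-3.5}(2) yields $\mathscr{U}{\text-}\pd M=\mathscr{C}{\text-}\pd M$ for every $M\in\Mod R$ with $\mathscr{C}{\text-}\pd M<\infty$, and Corollary \ref{cor-3.5}(3) yields $\mathscr{U}{\text-}\FPD=\mathscr{C}{\text-}\FPD$, which is exactly (1).

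For part (2) I would argue dually, using Remark \ref{rem-4.4}(2)(b): $\mathscr{V}$ is a coresolving subcategory of $\Mod R$ admitting $\mathscr{C}$ as a $\mathscr{C}$-proper generator, and $\Mod R$ has enough injective objects. Again heredity gives $\Ext_{\Mod R}^{\geq 1}(U,V)=0$ for all $U\in\mathscr{U}$, $V\in\mathscr{V}$, and since $\mathscr{C}=\mathscr{U}\cap\mathscr{V}\subseteq\mathscr{U}$ we get $\mathscr{V}\subseteq{\mathscr{C}^{\bot}}$. Then Corollary \ref{cor-3.5'}(2) and (3) (applied with $\mathscr{T}=\mathscr{V}$ and $\mathscr{E}=\mathscr{C}$) give $\mathscr{V}{\text-}\id M=\mathscr{C}{\text-}\id M$ whenever $\mathscr{C}{\text-}\id M<\infty$, and $\mathscr{V}{\text-}\FID=\mathscr{C}{\text-}\FID$.

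I do not anticipate a genuine obstacle here: the substantive work was already done in Corollaries \ref{cor-3.4}, \ref{cor-3.5}, \ref{cor-3.5'} and in Remark \ref{rem-4.4}(2). The one thing that must be verified rather than merely cited is the translation of "hereditary cotorsion pair with kernel $\mathscr{C}$" into the inclusions $\mathscr{U}\subseteq{^{\bot}\mathscr{C}}$ and $\mathscr{V}\subseteq{\mathscr{C}^{\bot}}$, and that is immediate from Definition \ref{def-2.2}(2.1) since $\mathscr{C}$ sits inside both $\mathscr{U}$ and $\mathscr{V}$.
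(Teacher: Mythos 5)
Your proposal is correct and follows exactly the route the paper intends: the paper derives Corollary \ref{cor-4.5} "immediately" from Corollaries \ref{cor-3.5} and \ref{cor-3.5'} together with Remark \ref{rem-4.4}(2), and your verification that heredity plus $\mathscr{C}=\mathscr{U}\cap\mathscr{V}$ gives $\mathscr{U}\subseteq{^{\bot}\mathscr{C}}$ and $\mathscr{V}\subseteq{\mathscr{C}^{\bot}}$ is precisely the only detail that needed checking.
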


Following the usual customary notation, we write
$$\pd_RM:={\mathcal{P}}(R){\text -}\pd M,\ \ \id_RM:={\mathcal{I}}(R){\text -}\id M,\ \ \fd_RM:={\mathcal{F}}(R){\text -}\pd M,$$
$$\Gpd_RM:={\mathcal{GP}}(R){\text -}\pd M,\ \ \Gid_RM:={\mathcal{GI}}(R){\text -}\id M,\ \ \Gfd_RM:={\mathcal{GF}}(R){\text -}\pd M,$$
$$\GCpd_RM:={\mathcal{GP}}_C(R){\text -}\pd M,\ \ \GCid_RM:={\mathcal{GI}}_C(R){\text -}\id M,\ \ \GCfd_RM:={\mathcal{GF}}_C(R){\text -}\pd M.$$

By Corollary \ref{cor-3.5} and Remark \ref{rem-4.4}(3)--(7), we immediately get the following result,
in which the assertion (2) extends \cite[Proposition 2.27 and Theorem 2.28]{H}, and the assertion (3) generalizes
\cite[Lemma 4.6]{X}.

\begin{cor} \label{cor-4.6}
\begin{enumerate}
\item[]
\item[$(1)$] For any $M\in\Mod R$ with $\mathcal{P}_C(R){\text -}\pd M<\infty$, we have
$$\GCpd_RM=\mathcal{P}_C(R){\text -}\pd M.$$ Moreover, we have
$$\mathcal{GP}_C(R){\text -}\FPD=\mathcal{P}_C(R){\text -}\FPD.$$
\item[$(2)$] For any $M\in\Mod R$ with $\pd_RM<\infty$, we have
$$\Gpd_RM=\mathcal{GP}_{ac}(R){\text -}\pd M=\mathcal{SGF}(R){\text -}\pd M=\mathcal{PGF}(R){\text -}\pd M=\pd_RM.$$
Moreover, we have
$$\mathcal{GP}(R){\text -}\FPD=\mathcal{GP}_{ac}(R){\text -}\FPD=\mathcal{SGF}(R){\text -}\FPD=
\mathcal{PGF}(R){\text -}\FPD=\mathcal{P}(R){\text -}\FPD.$$
\item[$(3)$] Let $R$ be a left and right Noetherian ring. Then for any $M\in\mod R$ with $\pd_RM<\infty$, we have
$$\mathcal{G}p(R){\text -}\pd_RM=\pd_RM.$$
Moreover, we have
$$\mathcal{G}p(R){\text -}\FPD=p(R){\text -}\FPD.$$
\item[$(4)$] For any $N\in\Mod R^{op}$ with $\mathcal{I}_C(R^{op}){\text -}\pd N<\infty$, we have
$$\mathcal{A}_C(R^{op}){\text -}\pd N=\mathcal{I}_C(R^{op}){\text -}\pd N.$$
Moreover, we have
$$\mathcal{A}_C(R^{op}){\text -}\FPD=\mathcal{I}_C(R^{op}){\text -}\FPD.$$
\end{enumerate}
\end{cor}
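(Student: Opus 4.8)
The plan is to apply Corollary~\ref{cor-3.5} (and its dual Corollary~\ref{cor-3.5'}) to each of the concrete resolving (resp.\ coresolving) subcategories listed in Remark~\ref{rem-4.4}(3)--(7), after verifying in each case the two hypotheses one needs: that the subcategory $\mathscr{T}$ in question admits an $\mathscr{E}$-coproper cogenerator $\mathscr{C}$ (already recorded in Remark~\ref{rem-4.4}), and that the inclusion $\mathscr{T}\subseteq{}^{\bot}\mathscr{C}$ holds so that part~(3) of Corollary~\ref{cor-3.5} gives the equality of finitistic dimensions, together with part~(2) for the module-wise equality. Concretely, for (1) I would take $\mathscr{A}=\Mod R$, $\mathscr{T}=\mathcal{GP}_C(R)$ and $\mathscr{C}=\mathscr{E}=\mathcal{P}_C(R)$; by Remark~\ref{rem-4.4}(3)(a), $\mathcal{GP}_C(R)={}^{\bot}\mathcal{P}_C(R)\cap\widetilde{\cores\mathcal{P}_C(R)}$ is resolving and admits $\mathcal{P}_C(R)$ as a $\mathcal{P}_C(R)$-coproper cogenerator, and the very description of $\mathcal{GP}_C(R)$ shows $\mathcal{GP}_C(R)\subseteq{}^{\bot}\mathcal{P}_C(R)$, so Corollary~\ref{cor-3.5}(2)(3) applies verbatim with $\mathscr{C}{\text -}\pd=\mathcal{P}_C(R){\text -}\pd$ and $\mathscr{T}{\text -}\pd=\GCpd_R$, yielding $\GCpd_RM=\mathcal{P}_C(R){\text -}\pd M$ whenever the latter is finite and $\mathcal{GP}_C(R){\text -}\FPD=\mathcal{P}_C(R){\text -}\FPD$.

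For (2), the point is that $\mathcal{GP}(R)$, $\mathcal{GP}_{ac}(R)$, $\mathcal{SGF}(R)$, and $\mathcal{PGF}(R)$ are \emph{all} resolving subcategories of $\Mod R$ that admit $\mathcal{P}(R)$ as a coproper cogenerator — for the respective classes of proper-exactness, this is Remark~\ref{rem-4.4}(3)(a), (5)(a), (4)(a), and (7). In each case the defining formula exhibits the subcategory as contained in ${}^{\bot}\mathscr{X}$ for the relevant $\mathscr{X}\supseteq\mathcal{P}(R)$, hence in ${}^{\bot}\mathcal{P}(R)$; thus Corollary~\ref{cor-3.5}(2)(3) gives for each of them the coincidence with $\pd_R$ on modules of finite projective dimension and the equality of finitistic dimensions with $\mathcal{P}(R){\text -}\FPD$. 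Chaining these four equalities together with $\pd_R$ produces the displayed chain of equalities in (2). For (3) I would argue identically inside $\mathscr{A}=\mod R$ with $\mathscr{T}=\mathcal{G}p(R)$ and $\mathscr{C}=\mathscr{E}=p(R)$, using Remark~\ref{rem-4.4}(3)(c) and $\mathcal{G}p(R)\subseteq{}^{\bot}{_RR}\subseteq{}^{\bot}p(R)$. For (4) I would invoke the dual Corollary~\ref{cor-3.5'} in $\mathscr{A}=\Mod R^{op}$, which does have enough injectives, with the coresolving subcategory $\mathcal{A}_C(R^{op})={}^{\bot}\mathcal{I}_C(R^{op})\cap\widetilde{\cores\mathcal{I}_C(R^{op})}$ (Remark~\ref{rem-4.4}(6)(a)) and its $\mathcal{I}_C(R^{op})$-coproper cogenerator $\mathcal{I}_C(R^{op})$; wait — here $\mathcal{A}_C(R^{op})$ is resolving, not coresolving, so in fact (4) should be read through Corollary~\ref{cor-3.5} itself (with ``projective dimension'' meaning $\mathcal{I}_C(R^{op}){\text -}\pd$), and the needed inclusion $\mathcal{A}_C(R^{op})\subseteq{}^{\bot}\mathcal{I}_C(R^{op})$ is again immediate from the defining formula.

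The verifications above are essentially bookkeeping once one has Corollary~\ref{cor-3.5}, since all the hard structural facts — that these subcategories are resolving and that they admit the stated coproper cogenerators — have been collected in Remark~\ref{rem-4.4} with references. The one genuinely substantive point to be careful about is checking, in each individual case, that the ambient abelian category has \emph{enough projective objects} (for $\Mod R$ and $\mod R$ over a left and right Noetherian ring this is clear) and that the cogenerator sits inside the subcategory so that the hypothesis ``$\mathscr{T}$ admits an $\mathscr{E}$-coproper cogenerator $\mathscr{C}$'' of Corollary~\ref{cor-3.5} is literally met — i.e.\ that $\mathscr{C}\subseteq\mathscr{T}$, which holds because $\mathcal{P}_C(R)\subseteq\mathcal{GP}_C(R)$, $\mathcal{P}(R)\subseteq\mathcal{GP}(R)\cap\mathcal{GP}_{ac}(R)\cap\mathcal{SGF}(R)\cap\mathcal{PGF}(R)$, $p(R)\subseteq\mathcal{G}p(R)$, and $\mathcal{I}_C(R^{op})\subseteq\mathcal{A}_C(R^{op})$. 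The statements in (2) about $\mathcal{GP}_{ac}$, $\mathcal{SGF}$, $\mathcal{PGF}$ then generalize \cite[Lemma~4.6]{X} and extend \cite[Proposition~2.27, Theorem~2.28]{H} exactly as advertised, and (3) follows in $\mod R$. The main obstacle, such as it is, is simply matching each concrete subcategory to the correct instance of Remark~\ref{rem-4.4} and to either Corollary~\ref{cor-3.5} or its dual, and confirming the $\mathscr{T}\subseteq{}^{\bot}\mathscr{C}$ inclusion from the definitions.
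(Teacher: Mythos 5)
Your proposal is exactly the paper's argument: the paper derives Corollary \ref{cor-4.6} with no further detail than ``By Corollary \ref{cor-3.5} and Remark \ref{rem-4.4}(3)--(7)'', and your case-by-case matching of each subcategory to Corollary \ref{cor-3.5}(2)(3) --- including your self-correction that (4) goes through Corollary \ref{cor-3.5} itself (since $\mathcal{A}_C(R^{op})$ is \emph{resolving} in $\Mod R^{op}$ and the statement concerns relative \emph{projective} dimensions) --- is the intended reading.

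One justification in part (2) is not quite right as stated. For $\mathcal{PGF}(R)={^{\bot}(\mathcal{I}(R^{op})^+)}\cap\widetilde{\cores_{\mathcal{I}(R^{op})^+}\mathcal{P}(R)}$ your uniform claim that ``the defining formula exhibits the subcategory as contained in $^{\bot}\mathscr{X}$ for the relevant $\mathscr{X}\supseteq\mathcal{P}(R)$'' fails: $\mathcal{I}(R^{op})^+$ consists of (pure-injective) flat modules and does \emph{not} contain $\mathcal{P}(R)$ in general, so $^{\bot}(\mathcal{I}(R^{op})^+)\subseteq{^{\bot}\mathcal{P}(R)}$ is not immediate from the formula. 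The inclusion $\mathcal{PGF}(R)\subseteq{^{\bot}\mathcal{P}(R)}$ that Corollary \ref{cor-3.5}(2)(3) needs is nevertheless true, but it requires the nontrivial fact $\mathcal{PGF}(R)=\mathcal{SGF}(R)\cap\mathcal{GF}(R)\subseteq\mathcal{SGF}(R)\subseteq{^{\bot}\mathcal{F}(R)}\subseteq{^{\bot}\mathcal{P}(R)}$, which is recorded at the end of Remark \ref{rem-4.4}(7) via the citation to \cite[Lemma 3]{I}. With that substitution your argument is complete; the other three subcategories in (2), as well as (1), (3) and (4), are handled correctly.
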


By Corollary \ref{cor-3.5'} and Remark \ref{rem-4.4}(3)--(6), we immediately get the following result,
in which the assertion (2) extends \cite[Theorem 2.29]{H}.

\begin{cor} \label{cor-4.7}
\begin{enumerate}
\item[]
\item[$(1)$] For any $M\in\Mod R$ with $\mathcal{I}_C(R){\text -}\id M<\infty$, we have
$$\GCid_RM=\mathcal{I}_C(R){\text -}\id M.$$
Moreover, we have
$$\mathcal{GI}_C(R){\text -}\FID=\mathcal{I}_C(R){\text -}\FID.$$
\item[$(2)$] For any $M\in\Mod R$ with $\id_RM<\infty$, we have
$$\Gid_RM=\mathcal{GI}_{ac}(R){\text -}\id M=\mathcal{GFI}(R){\text -}\id M=\id_RM.$$
Moreover, we have
$$\mathcal{GI}(R){\text -}\FID=\mathcal{GI}_{ac}(R){\text -}\FID=\mathcal{GFI}(R){\text -}\FID=\mathcal{I}(R){\text -}\FID.$$
\item[$(3)$] For any $M\in\Mod R$ with $\mathcal{P}_C(R){\text -}\id M<\infty$, we have
$$\mathcal{B}_C(R){\text -}\id M=\mathcal{P}_C(R){\text -}\id M.$$
Moreover, we have
$$\mathcal{B}_C(R){\text -}\FID=\mathcal{P}_C(R){\text -}\FID.$$
\end{enumerate}
\end{cor}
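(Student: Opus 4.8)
The plan is to obtain all three parts as direct instances of Corollary \ref{cor-3.5'}, fed with the coresolving subcategories and proper generators recorded in Remark \ref{rem-4.4}. The template is always the same: exhibit a triple $(\mathscr{E},\mathscr{C},\mathscr{T})$ of subcategories of the relevant module category with $\mathscr{C}\subseteq\mathscr{T}$, such that $\mathscr{T}$ is coresolving, $\mathscr{C}$ is an $\mathscr{E}$-proper generator for $\mathscr{T}$, and $\mathscr{T}\subseteq\mathscr{C}^{\bot}$. Then Corollary \ref{cor-3.5'}(2) gives $\mathscr{T}{\text -}\id M=\mathscr{C}{\text -}\id M$ whenever $\mathscr{C}{\text -}\id M<\infty$, and Corollary \ref{cor-3.5'}(3) gives $\mathscr{T}{\text -}\FID=\mathscr{C}{\text -}\FID$; since in each case $\mathscr{C}{\text -}\id$ and $\mathscr{C}{\text -}\FID$ are exactly the classical invariants named in the statement, this is all that is needed.

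Concretely, for (1) I would take $\mathscr{E}=\mathscr{C}=\mathcal{I}_C(R)$ and $\mathscr{T}=\mathcal{GI}_C(R)$: by Remark \ref{rem-4.4}(3)(b) (applied over $\Mod R$), $\mathcal{GI}_C(R)=\mathcal{I}_C(R)^{\bot}\cap\widetilde{\res\mathcal{I}_C(R)}$ is coresolving and admits $\mathcal{I}_C(R)$ as an $\mathcal{I}_C(R)$-proper generator, and the displayed intersection shows $\mathcal{GI}_C(R)\subseteq\mathcal{I}_C(R)^{\bot}$. For (3) the parallel choice is $\mathscr{E}=\mathscr{C}=\mathcal{P}_C(R)$ and $\mathscr{T}=\mathcal{B}_C(R)$, using Remark \ref{rem-4.4}(6)(b) together with $\mathcal{B}_C(R)=\mathcal{P}_C(R)^{\bot}\cap\widetilde{\res\mathcal{P}_C(R)}\subseteq\mathcal{P}_C(R)^{\bot}$. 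Part (2) is three such applications carried out simultaneously, with $\mathscr{C}=\mathcal{I}(R)$ throughout and $(\mathscr{E},\mathscr{T})$ successively equal to $(\mathcal{I}(R),\mathcal{GI}(R))$, $(\mathcal{AC}(R),\mathcal{GI}_{ac}(R))$ and $(\mathcal{FI}(R),\mathcal{GFI}(R))$; the coresolving property and the proper generators come from Remark \ref{rem-4.4}(3)(b), (5)(b) and (4)(b) respectively, and because $\mathscr{C}{\text -}\id=\id_R$ here, applying Corollary \ref{cor-3.5'}(2) to each of the three $\mathscr{T}$ yields the whole chain of equalities in (2), the finitistic clauses coming from Corollary \ref{cor-3.5'}(3).

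The only step that actually requires an argument --- and the only case where $\mathscr{E}\neq\mathscr{C}$ --- is verifying $\mathscr{T}\subseteq\mathscr{C}^{\bot}$ for $\mathcal{GI}_{ac}(R)$ and $\mathcal{GFI}(R)$ in part (2), since there $\mathscr{T}$ is a priori known only to lie in $\mathscr{E}^{\bot}$, namely $\mathcal{AC}(R)^{\bot}$ resp.\ $\mathcal{FI}(R)^{\bot}$. Here I would simply observe that every injective module is absolutely clean and FP-injective, so $\mathcal{I}(R)\subseteq\mathcal{AC}(R)$ and $\mathcal{I}(R)\subseteq\mathcal{FI}(R)$, whence $\mathcal{AC}(R)^{\bot}\subseteq\mathcal{I}(R)^{\bot}$ and $\mathcal{FI}(R)^{\bot}\subseteq\mathcal{I}(R)^{\bot}$; combined with $\mathcal{GI}_{ac}(R)\subseteq\mathcal{AC}(R)^{\bot}$ and $\mathcal{GFI}(R)\subseteq\mathcal{FI}(R)^{\bot}$ this gives $\mathscr{T}\subseteq\mathcal{I}(R)^{\bot}=\mathscr{C}^{\bot}$. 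I do not foresee a genuine obstacle: once the bookkeeping of matching each named class to its triple $(\mathscr{E},\mathscr{C},\mathscr{T})$ is done, the statement is immediate from Corollary \ref{cor-3.5'} and Remark \ref{rem-4.4}.
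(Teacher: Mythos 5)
Your proposal is correct and is exactly the derivation the paper intends: the paper states Corollary \ref{cor-4.7} as an immediate consequence of Corollary \ref{cor-3.5'} and Remark \ref{rem-4.4}(3)--(6), and your choice of triples $(\mathscr{E},\mathscr{C},\mathscr{T})$ together with the verification of $\mathscr{T}\subseteq\mathscr{C}^{\bot}$ (via $\mathcal{I}(R)\subseteq\mathcal{AC}(R)$ and $\mathcal{I}(R)\subseteq\mathcal{FI}(R)$ in the two non-obvious cases) supplies precisely the bookkeeping the paper leaves implicit.
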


\subsection{Equivalent characterizations of Gorenstein rings}

In this subsection, $R$ is a left and right Noetherian ring and $n\geq 0$.
Recall that $R$ is called {\it $n$-Gorenstein} if $\id_RR=\id_{R^{op}}R\leq n$.

The following lemma plays a crucial role in the sequel.

\begin{lem} \label{lem-4.8}
Let $\mathscr{T}$ be an $\mathscr{E}$-precoresolving subcategory of $\Mod R$
admitting an $\mathscr{E}$-coproper cogenerator $\mathscr{C}$, where
$\mathscr{E}$ is a subcategory of $\Mod R$ and $\mathscr{C}\subseteq\mathcal{F}(R)$.
If $\mathscr{T}$-$\pd M\leq n$ for any $M\in\mod R$, then $\id_{R^{op}}R\leq n$.
\end{lem}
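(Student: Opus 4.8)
The plan is to convert the conclusion $\id_{R^{op}}R\le n$ into a flat-dimension statement over $R$ that the hypothesis on $\mod R$ can reach, the bridge being the character module $R^{+}=\Hom_{\mathbb{Z}}(R,\mathbb{Q}/\mathbb{Z})$, viewed as a left $R$-module. Since $R$ is right Noetherian, $\id_{R^{op}}R\le n$ is equivalent to $\Ext^{n+1}_{R^{op}}(N,R)=0$ for all $N\in\mod R^{op}$; applying $(-)^{+}$ and the standard isomorphism $\Ext^{n+1}_{R^{op}}(N,R)^{+}\cong\Tor^{R}_{n+1}(N,R^{+})$ (valid for $N\in\mod R^{op}$ since such $N$ admit resolutions by finitely generated projectives), together with faithfulness of $(-)^{+}$, this becomes $\Tor^{R}_{n+1}(N,R^{+})=0$ for all $N\in\mod R^{op}$, i.e. $\fd_{R}R^{+}\le n$. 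So it suffices to bound the flat dimension of $R^{+}$; the essential difficulty is that $R^{+}$ is not finitely generated, whereas the hypothesis concerns finitely generated modules.

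To bring in the hypothesis I would fix $N\in\mod R^{op}$ and exhibit a finitely generated left $R$-module carrying $\Ext^{n+1}_{R^{op}}(N,R)$. Choose a projective resolution $\cdots\to Q_{1}\to Q_{0}\to N\to 0$ by finitely generated projective right $R$-modules ($R$ is right Noetherian); applying $\Hom_{R^{op}}(-,R)$ gives a complex $0\to Q_{0}^{\ast}\xrightarrow{\partial^{0}}Q_{1}^{\ast}\xrightarrow{\partial^{1}}\cdots$ of finitely generated projective left $R$-modules with $H^{i}\cong\Ext^{i}_{R^{op}}(N,R)$, and, putting $W:=\Coker\partial^{n}\in\mod R$, the induced map $\bar{\partial}^{\,n+1}\colon W\to Q_{n+2}^{\ast}$ has kernel $\Ext^{n+1}_{R^{op}}(N,R)$ (the left Noetherian hypothesis on $R$ keeps these duals and cosyzygies finitely generated). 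Now apply the hypothesis to $W$: $\mathscr{T}$-$\pd W\le n$, so by Lemma \ref{lem-3.1}(2) there is a short exact sequence $0\to K\to T\to W\to 0$ in $\Mod R$ with $T\in\mathscr{T}$ and $\mathscr{C}$-$\pd K\le n-1$. Since $\mathscr{C}\subseteq\mathcal{F}(R)$ we get $\fd_{R}K\le n-1$; and since $\mathscr{C}$ is an $\mathscr{E}$-coproper cogenerator of $\mathscr{T}$, iterating the defining short exact sequences produces an exact coresolution $0\to T\to F^{0}\to F^{1}\to\cdots$ with every $F^{i}$ flat. Tensoring the flat resolution of $K$, the sequence $0\to K\to T\to W\to 0$, and the flat coresolution of $T$ with the right $R$-modules relevant to the computation of the first paragraph, and using that flats are $\Tor$-acyclic, a dimension-shifting argument controls $\Tor^{R}_{n+1}(-,W)$; combined with the character-module isomorphisms this forces $\Ext^{n+1}_{R^{op}}(N,R)=0$, and since $N$ was arbitrary, $\id_{R^{op}}R\le n$.

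The hard part, and where the argument must be set up carefully, is precisely this dimension shift: the flat coresolution of $T$ is in general infinite, so one cannot simply push syzygies to the right until they vanish — this reflects the fact that the superficially stronger statement ``$\fd_{R}M\le n$ for every $M\in\mod R$'' is false (already for $R=k[x]/(x^{2})$, where $\mathscr{T}$ may be all of $\Mod R$). What makes it go through is that the hypothesis is available for \emph{every} finitely generated left $R$-module, hence for every finitely generated cosyzygy arising in the construction, so one exploits this repeated availability — rather than a single application — while keeping track of the projective dimensions of the finitely generated right $R$-modules (coming from the dual complex and from the finitely generated submodules of $R^{+}$) that actually enter the $\Tor$-computation. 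Organizing this bookkeeping so that all modules produced stay finitely generated and the shift terminates is the crux of the proof.
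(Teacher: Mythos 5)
There is a genuine gap, and you essentially flag it yourself: the entire content of the lemma is concentrated in the step you defer as ``the crux,'' namely how the hypothesis on finitely generated modules forces $\Ext^{n+1}_{R^{op}}(N,R)=0$. Your reduction to $\fd_RR^{+}\leq n$ is fine, and the module $W=\Coker\partial^{n}$ is a sensible finitely generated carrier of $\Ext^{n+1}_{R^{op}}(N,R)$, but the mechanism connecting the two is never supplied: the quantity to kill is the submodule $\ker(W\to Q_{n+2}^{*})$ of $W$, whereas the $\Tor$ groups you propose to control are $\Tor^{R}_{n+1}(N,R^{+})$ (where $R^{+}$ is not finitely generated, so the hypothesis cannot be applied to it) and $\Tor^{R}_{n+1}(-,W)$ (whose relevance to a submodule of $W$ is not explained). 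Moreover, from Lemma \ref{lem-3.1}(2) you only extract the presentation $0\to K\to T\to W\to 0$ exhibiting $W$ as a \emph{quotient} of an object of $\mathscr{T}$; this points the wrong way. The structural output actually needed is an \emph{embedding} of each finitely generated module into a module of flat dimension at most $n$, and that is exactly what Corollary \ref{cor-3.4}(1) provides: pushing out $0\to T\to M\to 0$ wait---more precisely, pushing out $0\to K\to T\to M\to 0$ along the coproper cogenerator sequence $0\to T\to C\to T'\to 0$ yields $0\to M\to K'\to T'\to 0$ with $\mathscr{C}\text{-}\pd K'\leq n$, hence $\fd_RK'\leq n$ since $\mathscr{C}\subseteq\mathcal{F}(R)$.

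Even with that embedding in hand, the implication ``every $M\in\mod R$ embeds in a left $R$-module of flat dimension at most $n$ $\Rightarrow$ $\id_{R^{op}}R\leq n$'' is a nontrivial theorem; the paper does not reprove it but cites it as \cite[Lemma 3.8]{HuH}, whose proof rests on the Auslander--Bridger transpose and torsionfree-dimension machinery, not on the naive dimension shift along an infinite flat coresolution that you correctly observe cannot terminate. So your proposal reproduces the easy half of the paper's argument (producing finite flat dimension from $\mathscr{C}\subseteq\mathcal{F}(R)$) but is missing both the push-out that turns the quotient presentation into an embedding and the substantive lemma that converts such embeddings into the bound on $\id_{R^{op}}R$.
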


\begin{proof}
Let $M\in\mod R$. If $\mathscr{T}$-$\pd M\leq n$, then by assumption
and Corollary \ref{cor-3.4}(1), there exists an exact sequence
$$0\to M\to K^{'}\to T^{'}\to 0$$
in $\Mod R$ with $\mathscr{C}$-$\pd K^{'}\leq n$ and $T^{'}\in\mathscr{T}$.
Since $\mathscr{C}\subseteq\mathcal{F}(R)$, we have $\fd_RK^{'}\leq n$.
Thus $\id_{R^{op}}R\leq n$ by \cite[Lemma 3.8]{HuH}.
\end{proof}

Recall from Remark \ref{rem-4.4}(3)(4) that
$${^{\bot}\mathcal{P}(R)}\cap\widetilde{\cores\mathcal{P}(R)}=\mathcal{GP}(R)
\supseteq\mathcal{SGF}(R)={^{\bot}\mathcal{F}(R)}\cap\widetilde{\cores_{\mathcal{F}(R)}\mathcal{P}(R)}.$$
In terms of the projective dimensions relative to all six subcategories of $\Mod R$ that appear in
this relation, we give some equivalent characterizations of $n$-Gorenstein rings as follows.

\begin{thm} \label{thm-4.9}
The following statements are equivalent.
\begin{enumerate}
\item[$(1)$] $R$ is $n$-Gorenstein.
\item[$(2)$] $\Gpd_RM\leq n$ for any $M\in\Mod R$.
\item[$(2)^{op}$] $\Gpd_{R^{op}}N\leq n$ for any $N\in\Mod R^{op}$.
\item[$(3)$] $^{\bot}\mathcal{P}(R){\text -}\pd M\leq n$ and $^{\bot}\mathcal{P}(R^{op}){\text -}\pd N\leq n$
for any $M\in\Mod R$ and $N\in\Mod R^{op}$.
\item[$(4)$] $\widetilde{\cores\mathcal{P}(R)}{\text -}\pd M\leq n$ and
$\widetilde{\cores\mathcal{P}(R^{op})}{\text -}\pd N\leq n$ for any $M\in\Mod R$ and $N\in\Mod R^{op}$.
\item[$(5)$] $\mathcal{SGF}(R){\text -}\pd M\leq n$ for any $M\in\Mod R$.
\item[$(5)^{op}$] $\mathcal{SGF}(R){\text -}\pd N\leq n$ for any $N\in\Mod R^{op}$.
\item[$(6)$] $^{\bot}\mathcal{F}(R){\text -}\pd M\leq n$ and $^{\bot}\mathcal{F}(R^{op}){\text -}\pd N\leq n$
for any $M\in\Mod R$ and $N\in\Mod R^{op}$.
\item[$(7)$] $\widetilde{\cores_{\mathcal{F}(R)}\mathcal{P}(R)}{\text -}\pd M\leq n$ and
$\widetilde{\cores_{\mathcal{F}(R^{op})}\mathcal{P}(R^{op})}{\text -}\pd N\leq n$ for any $M\in\Mod R$
and $N\in\Mod R^{op}$.
\end{enumerate}
\end{thm}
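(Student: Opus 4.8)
The plan is to show that $(1)$ implies each of $(2)$--$(7)$ together with their opposite-side versions, and that each of those implies $(1)$; since $R$ is $n$-Gorenstein if and only if $R^{op}$ is, it suffices to argue one side and transpose. The running tools are the descriptions in Remark~\ref{rem-4.4}(3)(4),
$$\mathcal{SGF}(R)\subseteq\mathcal{GP}(R)={^{\bot}\mathcal{P}(R)}\cap\widetilde{\cores\mathcal{P}(R)},\qquad \mathcal{SGF}(R)={^{\bot}\mathcal{F}(R)}\cap\widetilde{\cores_{\mathcal{F}(R)}\mathcal{P}(R)},$$
together with the containments ${^{\bot}\mathcal{F}(R)}\subseteq{^{\bot}\mathcal{P}(R)}$ and $\widetilde{\cores_{\mathcal{F}(R)}\mathcal{P}(R)}\subseteq\widetilde{\cores\mathcal{P}(R)}$, both coming from $\mathcal{P}(R)\subseteq\mathcal{F}(R)$. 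Since $\mathscr{X}\subseteq\mathscr{Y}$ forces $\mathscr{Y}{\text -}\pd\leq\mathscr{X}{\text -}\pd$, these inclusions reduce most of the required comparisons to trivialities.

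For $(1)\Rightarrow(2)$ and $(1)\Rightarrow(2)^{op}$ I would quote \cite[Theorem 11.5.1]{EJ} (applied to $R$, resp. $R^{op}$), and for $(2)\Rightarrow(1)$, $(2)^{op}\Rightarrow(1)$ I would restrict the hypothesis to finitely generated modules and invoke \cite[Theorem 1.4]{HuH}; this gives $(1)\Leftrightarrow(2)\Leftrightarrow(2)^{op}$. Now $(5)\Rightarrow(2)$ and $(5)^{op}\Rightarrow(2)^{op}$ are immediate from $\mathcal{SGF}(R)\subseteq\mathcal{GP}(R)$, $(6)\Rightarrow(3)$ from ${^{\bot}\mathcal{F}(R)}\subseteq{^{\bot}\mathcal{P}(R)}$, and $(7)\Rightarrow(4)$ from $\widetilde{\cores_{\mathcal{F}(R)}\mathcal{P}(R)}\subseteq\widetilde{\cores\mathcal{P}(R)}$. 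So for the backward direction it remains to prove $(3)\Rightarrow(1)$ and $(4)\Rightarrow(1)$. For $(3)$: the subcategory ${^{\bot}\mathcal{P}(R)}$ is plainly resolving, so by Lemma~\ref{lem-3.1}(1) the hypothesis $^{\bot}\mathcal{P}(R){\text -}\pd M\leq n$ says precisely that the $n$-th syzygy of every left module $M$ lies in ${^{\bot}\mathcal{P}(R)}$; dimension-shifting (against $R$ in the second variable) then gives $\Ext^{\geq n+1}_R(M,R)=0$ for all left $M$, i.e. $\id_R R\leq n$, and the right-hand half of $(3)$ gives $\id_{R^{op}}R\leq n$ in the same way. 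For $(4)$: $\widetilde{\cores\mathcal{P}(R)}$ is $\mathcal{P}(R)$-precoresolving with $\mathcal{P}(R)$ a $\mathcal{P}(R)$-coproper cogenerator (by the Horseshoe lemma), and $\mathcal{P}(R)\subseteq\mathcal{F}(R)$, so Lemma~\ref{lem-4.8} (applied to finitely generated modules, and to $R^{op}$ for the right-hand half) yields $\id_R R\leq n$ and $\id_{R^{op}}R\leq n$.

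It remains to obtain $(3)$--$(7)$ and $(5)^{op}$ from $(1)$. Having $(2)$ and $(2)^{op}$ already, the bounds $^{\bot}\mathcal{P}(R){\text -}\pd M\leq\Gpd_R M\leq n$ and $\widetilde{\cores\mathcal{P}(R)}{\text -}\pd M\leq\Gpd_R M\leq n$ (and their right-hand analogues) give $(3)$ and $(4)$ at once. The one genuinely substantial step---and the expected main obstacle---is $(1)\Rightarrow(5)$. Here the plan is to prove that over an $n$-Gorenstein ring $\mathcal{SGF}(R)=\mathcal{GP}(R)$: the inclusion $\subseteq$ is free, and for $\supseteq$ one uses the classical fact that over an $n$-Gorenstein ring every flat module has projective dimension at most $n$, so that inserting a finite projective resolution of a flat module $F$ into a complete projective resolution of a Gorenstein projective module $G$ and dimension-shifting shows both that $G\in{^{\bot}\mathcal{F}(R)}$ and that the coproper projective coresolution of $G$ stays exact under $\Hom_R(-,F)$---that is, $G\in\mathcal{SGF}(R)$. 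Granting this, $(1)\Rightarrow(2)$ gives $\mathcal{SGF}(R){\text -}\pd M=\Gpd_R M\leq n$, which is $(5)$ (and $(5)^{op}$ by transposition), and then $\mathcal{SGF}(R)\subseteq{^{\bot}\mathcal{F}(R)}$ and $\mathcal{SGF}(R)\subseteq\widetilde{\cores_{\mathcal{F}(R)}\mathcal{P}(R)}$ promote $(5)$ and $(5)^{op}$ to $(6)$ and $(7)$. Thus everything but the identification $\mathcal{SGF}(R)=\mathcal{GP}(R)$ over $n$-Gorenstein rings is bookkeeping with the inclusion-induced inequalities, Lemmas~\ref{lem-3.1} and \ref{lem-4.8}, and the two cited theorems.
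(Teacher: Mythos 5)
Most of your proposal tracks the paper's argument: the inclusion-induced inequalities, $(1)\Rightarrow(2)+(2)^{op}$ via \cite[Theorem 11.5.1]{EJ}, $(3)\Rightarrow(1)$ by dimension shifting, and $(4)\Rightarrow(1)$ via Lemma \ref{lem-4.8} are exactly what the paper does. Your direct verification that $\mathcal{GP}(R)=\mathcal{SGF}(R)$ over an $n$-Gorenstein ring (using that every flat module then has projective dimension at most $n$) is sound; the paper simply cites \cite[Corollary 2.8]{DLM} for this.

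The genuine gap is in $(2)\Rightarrow(1)$ and $(2)^{op}\Rightarrow(1)$. You propose to restrict $(2)$ to finitely generated modules and invoke \cite[Theorem 1.4]{HuH}. That theorem's hypothesis concerns the Auslander--Bridger Gorenstein dimension of modules in $\mod R$, i.e.\ resolutions by totally reflexive modules in $\mathcal{G}p(R)$, whereas hypothesis $(2)$ restricted to $\mod R$ bounds $\Gpd_RM=\mathcal{GP}(R){\text -}\pd M$, computed with arbitrary (typically infinitely generated) Gorenstein projective modules. Since $\mathcal{G}p(R)\subseteq\mathcal{GP}(R)$, the resulting inequality points the wrong way: $\Gpd_RM\leq n$ gives neither $\mathcal{G}p(R){\text -}\pd M\leq n$ nor even its finiteness, and whether a finitely generated Gorenstein projective module over a two-sided Noetherian ring must be totally reflexive is a delicate question you cannot wave through. (This is precisely why the paper's introduction says the converse of ``$n$-Gorenstein implies $\Gpd_RM\leq n$'' is ``far from clear'' despite \cite[Theorem 1.4]{HuH} being available.) A secondary defect is that the cited theorem is qualitative, yielding only that $R$ is Gorenstein rather than $n$-Gorenstein; that part is repairable, since dimension shifting against $R$ already gives $\id_RR\leq n$ and finiteness of both self-injective dimensions forces them to coincide, but the big-versus-small issue is a real obstruction. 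The repair is in your own toolkit: $\mathcal{GP}(R)$ is a $\mathcal{P}(R)$-precoresolving subcategory of $\Mod R$ admitting the $\mathcal{P}(R)$-coproper cogenerator $\mathcal{P}(R)\subseteq\mathcal{F}(R)$, so Lemma \ref{lem-4.8} applied with $\mathscr{T}=\mathcal{GP}(R)$ gives $\id_{R^{op}}R\leq n$ directly from $(2)$, while dimension shifting gives $\id_RR\leq n$. This is exactly how the paper argues, and it is the same device you correctly deploy for $(4)\Rightarrow(1)$.
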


\begin{proof}
The implications $(2)+(2)^{op}\Rightarrow (3)+(4)$, $(5)+(5)^{op}\Rightarrow (6)+(7)$,
$(5)\Rightarrow (2)$, $(5)^{op}\Rightarrow (2)^{op}$, $(6)\Rightarrow (3)$
and $(7)\Rightarrow (4)$ are trivial. By \cite[Theorem 11.5.1]{EJ}, we have $(1)\Rightarrow (2)+(2)^{op}$.

If $R$ is $n$-Gorenstein, then $\mathcal{GP}(R)=\mathcal{SGF}(R)$ and
$\mathcal{GP}(R^{op})=\mathcal{SGF}(R^{op})$ by \cite[Corollary 2.8]{DLM},
and thus $(1)\Rightarrow (5)+(5)^{op}$ holds true.

$(3)\Rightarrow (1)$ By (3) and dimension shifting, it is easy to see that
$$\Ext_R^{\geq n+1}(M,R)=0=\Ext_{R^{op}}^{\geq n+1}(N,R)$$
for any $M\in\Mod R$ and $N\in\Mod R^{op}$.
It implies $\id_{R}R\leq n$ and $\id_{R^{op}}R\leq n$.

$(2)\Rightarrow (1)$ By (2) and dimension shifting, it is easy to get $\Ext_R^{\geq n+1}(M,R)=0$
for any $M\in\Mod R$, and so $\id_RR\leq n$.
By \cite[Theorem 2.5]{H}, we have that $\mathcal{GP}(R)$ is resolving
in $\Mod R$ admitting a $\mathcal{P}(R)$-coproper cogenerator $\mathcal{P}(R)(\subseteq\mathcal{F}(R))$.
Thus $\id_{R^{op}}R\leq n$ by (2) and Lemma \ref{lem-4.8}.

Symmetrically, we get $(2)^{op}\Rightarrow (1)$.

$(4)\Rightarrow (1)$ By the dual version of \cite[Lemma 8.2.1]{EJ} (cf. \cite[Horseshoe Lemma 1.7]{H}),
we have that $\widetilde{\cores\mathcal{P}(R)}$
is closed under $\mathcal{P}(R)$-coproper extensions. Thus $\widetilde{\cores\mathcal{P}(R)}$ is a
$\mathcal{P}(R)$-precoresolving subcategory of $\Mod R$
admitting a $\mathcal{P}(R)$-coproper cogenerator $\mathcal{P}(R)(\subseteq\mathcal{F}(R))$.
Thus $\id_{R^{op}}R\leq n$ by (4) Lemma \ref{lem-4.8}. Symmetrically, we have $\id_{R}R\leq n$.
\end{proof}

The following result is a dual version of Lemma \ref{lem-4.8}.

\begin{lem} \label{lem-4.10}
Let $\mathscr{T}$ be an $\mathscr{E}$-preresolving subcategory of $\Mod R$
admitting an $\mathscr{E}$-proper generator $\mathscr{C}$, where
$\mathscr{E}$ is a subcategory of $\Mod R$ and $\mathscr{C}\subseteq\mathcal{I}(R)$.
If $\mathscr{T}$-$\id M\leq n$ for any $M\in\Mod R$, then $\id_{R}R\leq n$.
\end{lem}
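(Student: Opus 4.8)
The plan is to run the dual of the proof of Lemma~\ref{lem-4.8}; in fact the argument shortens considerably, since it will be enough to apply the hypothesis to the single module $M={}_RR$ and to exploit that $R$ is projective, so that the short exact sequence produced splits on its own.

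Concretely, first I would note that by hypothesis $\mathscr{T}{\text -}\id{}_RR\leq n<\infty$. Since $\mathscr{T}$ is an $\mathscr{E}$-preresolving subcategory of $\Mod R$ admitting the $\mathscr{E}$-proper generator $\mathscr{C}$, Corollary~\ref{cor-3.4'}(1) applied to $M={}_RR$ yields an exact sequence
$$0\to T'\to K'\to T\to K\to 0$$
in $\Mod R$ with $T',T\in\mathscr{T}$, with $\mathscr{C}{\text -}\id K'\leq\mathscr{T}{\text -}\id{}_RR\leq n$, and with ${}_RR\cong\Im(K'\to T)$. Breaking this four-term sequence at $\Im(K'\to T)$, its left-hand part is the short exact sequence $0\to T'\to K'\to R\to 0$ in $\Mod R$. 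Since ${}_RR$ is projective, this sequence splits, so $R$ is isomorphic to a direct summand of $K'$; and since $\mathscr{C}\subseteq\mathcal{I}(R)$, the inequality $\mathscr{C}{\text -}\id K'\leq n$ gives an injective coresolution of $K'$ of length at most $n$, whence $\id_RR\leq\id_RK'\leq n$, as required. (The boundary case $\mathscr{T}{\text -}\id{}_RR=0$, i.e.\ $R\in\mathscr{T}$, is identical, using the defining $\mathscr{E}$-proper generator sequence $0\to T'\to C\to R\to 0$ with $C\in\mathscr{C}$ in place of Corollary~\ref{cor-3.4'}(1).)

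I do not anticipate a genuine obstacle here; the only point requiring care is to transcribe correctly the dual statement Corollary~\ref{cor-3.4'}(1). It is worth noting why the dual argument is strictly easier than that of Lemma~\ref{lem-4.8}: there $R$ enters only as a \emph{submodule} of a module of finite flat dimension, so the relevant sequence need not split and one must appeal to the more delicate homological input \cite[Lemma~3.8]{HuH}; here $R$ appears as a \emph{quotient} of a module of finite injective dimension, and projectivity of $R$ splits the sequence for free.
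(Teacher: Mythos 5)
Your proof is correct, and it takes a genuinely different, shorter route than the paper's. The paper never applies the hypothesis to ${}_RR$: instead, for an arbitrary $N\in\mod R^{op}$ it applies the hypothesis to the character module $N^+\in\Mod R$, uses Corollary \ref{cor-3.4'}(1) to produce $0\to T'\to K'\stackrel{f}\to N^+\to 0$ with $\id_RK'\leq n$, dualizes to get $\fd_{R^{op}}(K')^+\leq n$, composes the canonical embedding $N\rightarrowtail N^{++}$ with $f^+$ to embed $N$ into a right module of flat dimension at most $n$, and then invokes \cite[Lemma 3.8]{HuH}. That argument relies on the standing assumption of the subsection that $R$ is left and right Noetherian (needed for the cited lemma) and uses the hypothesis for the whole family of modules $N^+$. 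Your argument applies the hypothesis to the single module ${}_RR$, splits the epimorphism $K'\to R$ coming from the left-hand piece of the four-term sequence of Corollary \ref{cor-3.4'}(1) by projectivity of $R$, and reads off $\id_RR\leq\id_RK'\leq n$ from $\mathscr{C}\subseteq\mathcal{I}(R)$; this is more elementary, requires no Noetherian hypothesis and no character-module machinery, and actually proves the formally stronger statement that $\mathscr{T}\text{-}\id{}_RR\leq n$ alone suffices. Your closing remark about why the same shortcut is unavailable for Lemma \ref{lem-4.8} is also on target: there the conclusion concerns $\id_{R^{op}}R$ while the hypothesis lives on the left, and the module of finite flat dimension contains $M$ as a submodule rather than mapping onto it, so one cannot avoid the embedding criterion of \cite[Lemma 3.8]{HuH}.
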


\begin{proof}
Let $N\in\mod R^{op}$. Then $N^+\in\Mod R$ and $\mathscr{T}$-$\id N^+\leq n$ by assumption.
It follows from Corollary \ref{cor-3.4'}(1) that there exists an exact sequence
$$0\to T^{'}\to K^{'}\buildrel {f} \over\longrightarrow N^+\to 0$$
in $\Mod R$ with $T^{'}\in\mathscr{T}$ and $\mathscr{C}$-$\id K^{'}\leq n$.
Since $\mathscr{C}\subseteq\mathcal{I}(R)$, we have $\id_{R}K^{'}\leq n$.
It follows from \cite[Theorem 2.2]{F} that $\fd_{R^{op}}{K^{'}}^+\leq n$.

On the other hand, by \cite[Proposition 5.3.9]{EJ}, there exists a monomorphism
$\lambda:N\rightarrowtail N^{++}$ in $\Mod R^{op}$, and hence
$\lambda f^+:N\rightarrowtail {K^{'}}^+$ is also a monomorphism in $\Mod R^{op}$.
Thus $\id_{R}R\leq n$ by \cite[Lemma 3.8]{HuH}.
\end{proof}

Recall from Remark \ref{rem-4.4}(3) that
$$\mathcal{GI}(R)={\mathcal{I}(R)^{\bot}}\cap\widetilde{\res\mathcal{I}(R)}.$$
In terms of the injective dimensions relative to all three subcategories of $\Mod R$ that appear in
this equality, we give some equivalent characterizations of $n$-Gorenstein rings as follows.

\begin{thm} \label{thm-4.11}
The following statements are equivalent.
\begin{enumerate}
\item[$(1)$] $R$ is $n$-Gorenstein.
\item[$(2)$] $\Gid_RM\leq n$ for any $M\in\Mod R$.
\item[$(2)^{op}$] $\Gid_{R^{op}}N\leq n$ for any $N\in\Mod R^{op}$.
\item[$(3)$] $\mathcal{I}(R)^{\bot}{\text -}\id M\leq n$ and $\mathcal{I}(R^{op})^{\bot}{\text -}\id N\leq n$
for any $M\in\Mod R$ and $N\in\Mod R^{op}$.
\item[$(4)$] $\widetilde{\res\mathcal{I}(R)}{\text -}\id M\leq n$ and
$\widetilde{\res\mathcal{I}(R^{op})}{\text -}\id N\leq n$ for any $M\in\Mod R$ and $N\in\Mod R^{op}$.
\end{enumerate}
\end{thm}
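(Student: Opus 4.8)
The plan is to prove Theorem~\ref{thm-4.11} in parallel with Theorem~\ref{thm-4.9}, replacing the projective machinery of Subsection~3.1 by its injective dual and leaning on Lemma~\ref{lem-4.10} together with the Noetherian criterion \cite[Lemma 3.8]{HuH}. By Remark~\ref{rem-4.4}(3)(b) (and its left-hand analogue) one has $\mathcal{I}(R)\subseteq\mathcal{GI}(R)\subseteq\mathcal{I}(R)^{\bot}$ and $\mathcal{GI}(R)\subseteq\widetilde{\res\mathcal{I}(R)}$, so a $\mathcal{GI}(R)$-coresolution of length $\leq n$ is at the same time an $\mathcal{I}(R)^{\bot}$-coresolution and a $\widetilde{\res\mathcal{I}(R)}$-coresolution; hence $(2)+(2)^{op}\Rightarrow(3)$ and $(2)+(2)^{op}\Rightarrow(4)$ are immediate. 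The implication $(1)\Rightarrow(2)+(2)^{op}$ is the classical one: over an $n$-Gorenstein ring every left, and every right, module has Gorenstein injective dimension $\leq n$ (compare \cite{EJ}). Thus the real work is to establish $(2)\Rightarrow(1)$, $(2)^{op}\Rightarrow(1)$, $(3)\Rightarrow(1)$ and $(4)\Rightarrow(1)$.

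For $(4)\Rightarrow(1)$ I would first check, using the Horseshoe Lemma \cite[Lemma 8.2.1]{EJ}, that $\widetilde{\res\mathcal{I}(R)}$ is closed under $\mathcal{I}(R)$-proper extensions, so that Lemma~\ref{lem-4.10} applies with $\mathscr{T}=\widetilde{\res\mathcal{I}(R)}$ and $\mathscr{E}=\mathscr{C}=\mathcal{I}(R)$; its left-hand hypothesis then gives $\id_RR\leq n$ and its right-hand hypothesis gives $\id_{R^{op}}R\leq n$. For $(2)\Rightarrow(1)$: Lemma~\ref{lem-4.10} with $\mathscr{T}=\mathcal{GI}(R)$, $\mathscr{E}=\mathscr{C}=\mathcal{I}(R)$ (admissible by Remark~\ref{rem-4.4}(3)(b)) yields $\id_RR\leq n$ at once, while for the other self-injective dimension I would use that $(2)$ forces $\mathcal{I}(R)^{\bot}$-$\id M\leq n$ for every $M\in\Mod R$ and that shifting dimensions through such a coresolution produces $\Ext^{\geq n+1}_R(E,M)=0$ for every injective $E\in\Mod R$ and every $M$, i.e.\ $\pd_RE\leq n$, hence $\fd_RE\leq n$, for every injective left $R$-module $E$; since every $M\in\mod R$ embeds into its injective envelope, which then has flat dimension $\leq n$, \cite[Lemma 3.8]{HuH} delivers $\id_{R^{op}}R\leq n$. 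The implication $(2)^{op}\Rightarrow(1)$ is the mirror image. Finally, for $(3)\Rightarrow(1)$ the same dimension shift, applied to the $\mathcal{I}(R)^{\bot}$-part of $(3)$, bounds by $n$ the flat dimension of the injective envelopes of finitely generated left $R$-modules, whence $\id_{R^{op}}R\leq n$ by \cite[Lemma 3.8]{HuH}; the $\mathcal{I}(R^{op})^{\bot}$-part gives $\id_RR\leq n$ symmetrically.

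The step I expect to be the main obstacle is exactly this passage from a one-sided hypothesis to both self-injective dimensions. In the projective case of Theorem~\ref{thm-4.9} the syzygy condition ``$G_i\in{}^{\bot}R$'' bounds $\Ext^{\geq n+1}_R(-,R)$ directly, because $R$ is projective; here the cosyzygy condition only controls $\Ext$ \emph{out of} injective modules, so one must reinterpret ``$\mathcal{I}(R)^{\bot}$-$\id M\leq n$ for all $M$'' as ``$\pd_R$ of injectives is $\leq n$'' and then cross to the opposite ring through injective envelopes and the character-module lemma \cite[Lemma 3.8]{HuH}. Getting this bridge right, and checking carefully that $\widetilde{\res\mathcal{I}(R)}$ (and not merely $\mathcal{GI}(R)$) fulfils the hypotheses of Lemma~\ref{lem-4.10}, are the delicate points; the rest is a routine transcription of the dual of Theorem~\ref{thm-4.9}.
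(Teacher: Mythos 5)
Your proposal is correct and follows essentially the same route as the paper: the trivial implications, \cite[Theorem 11.2.1]{EJ} for $(1)\Rightarrow(2)+(2)^{op}$, Lemma \ref{lem-4.10} (applied to $\mathcal{GI}(R)$ and to $\widetilde{\res\mathcal{I}(R)}$ after the Horseshoe-Lemma check) for one self-injective dimension, and dimension shifting against injectives for the other. The only cosmetic difference is that for the second self-injective dimension the paper specializes the dimension shift to the single injective module $(R_R)^{+}$ and invokes \cite[Theorem 2.2]{F}, whereas you bound $\fd$ of all injective left modules and pass through injective envelopes and \cite[Lemma 3.8]{HuH}; both bridges work.
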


\begin{proof}
The implications $(2)+(2)^{op}\Rightarrow (3)+(4)$ are trivial.
By \cite[Theorem 11.2.1]{EJ}, we have $(1)\Rightarrow (2)+(2)^{op}$.

$(3)\Rightarrow (1)$ By \cite[Theorem 2.1]{F}, we have $(R_R)^+\in\mathcal{I}(R)$ and $(_RR)^+\in\mathcal{I}(R^{op})$.
Then by (3) and dimension shifting, it is easy to see that
$$\Ext_R^{\geq n+1}((R_R)^+,M)=0=\Ext_{R^{op}}^{\geq n+1}((_RR)^+,N)$$
for any $M\in\Mod R$ and $N\in\Mod R^{op}$.
It implies $\fd_{R}(R_R)^+\leq\pd_{R}(R_R)^+\leq n$ and $\fd_{R^{op}}(_RR)^+\leq\pd_{R^{op}}(_RR)^+\leq n$.
It follows from \cite[Theorem 2.2]{F} that $\id_{R^{op}}R\leq n$ and $\id_{R}R\leq n$.

$(2)\Rightarrow (1)$ Similar to the proof of $(3)\Rightarrow (1)$, we have $\id_{R^{op}}R\leq n$.
By \cite[Theorem 2.6]{H}, we have that $\mathcal{GI}(R)$ is coresolving
in $\Mod R$ admitting an $\mathcal{I}(R)$-proper generator $\mathcal{I}(R)$.
Thus $\id_{R}R\leq n$ by (2) and Lemma \ref{lem-4.10}.

Symmetrically, we get $(2)^{op}\Rightarrow (1)$.

$(4)\Rightarrow (1)$ By \cite[Lemma 8.2.1]{EJ}, we have that $\widetilde{\res\mathcal{I}(R)}$
is closed under $\mathcal{I}(R)$-proper extensions. Thus $\widetilde{\res\mathcal{I}(R)}$ is an
$\mathcal{I}(R)$-preresolving subcategory of $\Mod R^{op}$
admitting an $\mathcal{I}(R)$-proper generator $\mathcal{I}(R^{op})$.
Thus $\id_{R}R\leq n$ by (4) and Lemma \ref{lem-4.10}. Symmetrically, we have $\id_{R^{op}}R\leq n$.
\end{proof}

Recall from \cite{EJ} that a module $M\in\Mod R$ is called {\it cotorsion} if $\Ext^1_R(F,M)=0$
for any $F\in\mathcal{F}(R)$ (equivalently, $M\in\mathcal{F}(R)^{\bot}$). We write
$$\mathcal{FC}(R):=\{\text{flat and cotorsion modules in}\ \Mod R\}.$$

\begin{lem} \label{lem-4.12}
\begin{enumerate}
\item[]
\item[$(1)$]
$\mathcal{I}(R^{op})^+$ is an $\mathcal{I}(R^{op})^+$-coproper cogenerator
and $\mathcal{FC}(R)$ is an $\mathcal{FC}(R)$-coproper cogenerator for $\mathcal{F}(R)$.
\item[$(2)$] We have
\begin{align*}
& \ \ \ \ \  \ \ \widetilde{\cores\mathcal{I}(R^{op})^+}=\widetilde{\cores_{\mathcal{I}(R^{op})^+}\mathcal{FC}(R)}
=\widetilde{\cores_{\mathcal{I}(R^{op})^+}\mathcal{F}(R)}\\
&\ \ \ =\widetilde{\cores\mathcal{FC}(R)}=\widetilde{\cores_{\mathcal{FC}(R)}\mathcal{F}(R)}
\supseteq\widetilde{\cores\mathcal{F}(R)}.
\end{align*}
Moreover, all of these subcategories except $\widetilde{\cores\mathcal{F}(R)}$ are closed under
$\mathcal{I}(R^{op})^+$-coproper extensions.
\end{enumerate}
\end{lem}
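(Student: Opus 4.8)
The plan is to reduce everything to Proposition~\ref{prop-3.7}, one application of the Horseshoe Lemma, and two standard features of the left and right Noetherian ring $R$: that the character module of an injective module is flat, and that flatness of a left $R$-module is detected by $\Tor_1$ against injective right $R$-modules. Write $\mathcal{E}:=\mathcal{I}(R^{op})^+$ for brevity. First I would record the inclusions $\mathcal{E}\subseteq\mathcal{FC}(R)\subseteq\mathcal{F}(R)$: for $I\in\mathcal{I}(R^{op})$ the module $I^+$ is flat because $R$ is right Noetherian (the character module of an injective module is flat, see \cite{F,EJ}), and it is cotorsion because every character module is pure-injective and pure-injective modules are cotorsion; the inclusion $\mathcal{FC}(R)\subseteq\mathcal{F}(R)$ is immediate. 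In particular both $\mathcal{E}$ and $\mathcal{FC}(R)$ lie in $\mathcal{F}(R)$, as required for them to be cogenerators of $\mathcal{F}(R)$.

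For part~(1): given $F\in\mathcal{F}(R)$, I would use the canonical pure-exact sequence $0\to F\to F^{++}\to F^{++}/F\to 0$. Here $F^{++}=(F^+)^+\in\mathcal{E}$ because $F^+$ is an injective right $R$-module; $F^{++}$ is flat by the preceding paragraph, so $F^{++}/F$ is flat as a pure epimorphic image of a flat module; and a pure-exact sequence is $\Hom_R(-,W)$-exact for every pure-injective $W$, in particular for every $W\in\mathcal{E}$. This exhibits $\mathcal{E}$ as an $\mathcal{E}$-coproper cogenerator for $\mathcal{F}(R)$. For the other half, the flat cotorsion pair $(\mathcal{F}(R),\mathcal{F}(R)^{\bot})$ is complete (all modules have flat covers, \cite{EJ}) and hereditary (because $\mathcal{F}(R)$ is resolving), and its kernel is $\mathcal{FC}(R)$, so $\mathcal{FC}(R)$ is an $\mathcal{FC}(R)$-coproper cogenerator for $\mathcal{F}(R)$ by Remark~\ref{rem-4.4}(2). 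Since $\mathcal{F}(R)$ is closed under arbitrary extensions, it is a fortiori $\mathcal{E}$-precoresolving with cogenerator $\mathcal{E}$ and $\mathcal{FC}(R)$-precoresolving with cogenerator $\mathcal{FC}(R)$.

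For part~(2): applying Proposition~\ref{prop-3.7} to $\mathcal{F}(R)$ with its $\mathcal{E}$-coproper cogenerator $\mathcal{E}$ gives $\widetilde{\cores\mathcal{E}}=\widetilde{\cores_{\mathcal{E}}\mathcal{F}(R)}$; since $\mathcal{E}\subseteq\mathcal{FC}(R)\subseteq\mathcal{F}(R)$ squeezes $\widetilde{\cores_{\mathcal{E}}\mathcal{FC}(R)}$ between these two, the first three of our subcategories coincide. Applying Proposition~\ref{prop-3.7} to $\mathcal{F}(R)$ with its $\mathcal{FC}(R)$-coproper cogenerator $\mathcal{FC}(R)$ gives $\widetilde{\cores\mathcal{FC}(R)}=\widetilde{\cores_{\mathcal{FC}(R)}\mathcal{F}(R)}$, which is the fourth equality; enlarging the resolving class or shrinking the exactness-test class yields the easy inclusions $\widetilde{\cores_{\mathcal{FC}(R)}\mathcal{F}(R)}\subseteq\widetilde{\cores_{\mathcal{E}}\mathcal{F}(R)}$ and $\widetilde{\cores\mathcal{F}(R)}\subseteq\widetilde{\cores_{\mathcal{FC}(R)}\mathcal{F}(R)}$, the latter being the claimed final containment. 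Thus only the reverse inclusion $\widetilde{\cores\mathcal{E}}\subseteq\widetilde{\cores\mathcal{FC}(R)}$ is at issue. For this I would take $M\in\widetilde{\cores\mathcal{E}}$ with a $\Hom_R(-,\mathcal{E})$-exact exact coresolution $0\to M\to E^0\to E^1\to\cdots$ with all $E^i\in\mathcal{E}$; since $\mathcal{E}\subseteq\mathcal{FC}(R)$ this already resolves $M$ by $\mathcal{FC}(R)$-objects, and once we know each cosyzygy $M^j$ is flat, every piece $0\to M^{j-1}\to E^j\to M^j\to 0$ is automatically $\Hom_R(-,\mathcal{FC}(R))$-exact (the pushout of this piece along a map into a flat cotorsion module $W$ represents an element of $\Ext^1_R(M^j,W)=0$), whence $M\in\widetilde{\cores\mathcal{FC}(R)}$. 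To see that the $M^j$ are flat: applying $\Hom_R(-,I^+)\cong(I\otimes_R-)^+$ (formula~(4.1)) to the coresolution and using that $(-)^+$ is exact and faithful, $I\otimes_R(0\to M\to E^0\to E^1\to\cdots)$ is exact for every injective right $R$-module $I$; since the $E^i$ are flat, dimension shifting along the coresolution gives $\Tor_1^R(I,M^j)=0$ for all $j$ and all such $I$; and then I would invoke the fact that over a left and right Noetherian ring a left module $N$ with $\Tor_1^R(I,N)=0$ for all injective right $R$-modules $I$ is flat --- equivalently, by formula~(4.2), that $\Ext^1_R(N,I^+)=0$ for all injective right $I$ forces $N\in\mathcal{F}(R)$.

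Since the five subcategories in question all equal $\widetilde{\cores\mathcal{E}}$, their closure under $\mathcal{E}$-coproper extensions follows from the dual of the Horseshoe Lemma \cite[Lemma 8.2.1]{EJ} applied to two $\Hom_R(-,\mathcal{E})$-exact $\mathcal{E}$-coresolutions: the required liftings exist because $\mathcal{E}$ is an $\mathcal{E}$-coproper cogenerator for $\mathcal{F}(R)$, the intermediate cosyzygy sequences stay $\Hom_R(-,\mathcal{E})$-exact by the $(3\times 3)$-argument of \cite[Lemma 2.4]{Hu1}, and $\mathcal{E}$ is closed under finite direct sums; alternatively this is immediate from $\widetilde{\cores\mathcal{E}}=\widetilde{\cores_{\mathcal{E}}\mathcal{F}(R)}$ together with $\mathcal{F}(R)$ being $\mathcal{E}$-precoresolving. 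The subcategory $\widetilde{\cores\mathcal{F}(R)}$ is not reached by this argument because its exactness is tested against the larger class $\mathcal{F}(R)$, which only yields closure under $\mathcal{F}(R)$-coproper extensions. I expect the single genuinely non-formal point to be the flatness of the cosyzygies $M^j$, i.e.\ that $\Tor_1$-vanishing against injective right $R$-modules forces flatness over a left and right Noetherian ring (equivalently, that a $\Hom_R(-,\mathcal{I}(R^{op})^+)$-exact exact complex of flat modules has flat cosyzygies); this I would either extract from the literature on the flat dimension of injective modules or prove directly from the Noetherian hypothesis, it being the flat-dimension counterpart of the Bass characterization of injective modules via cyclic modules. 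Everything else is bookkeeping with Proposition~\ref{prop-3.7}, the Horseshoe Lemma, and the elementary behaviour of character modules.
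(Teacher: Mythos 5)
Your formal skeleton---the chain $\mathcal{I}(R^{op})^+\subseteq\mathcal{FC}(R)\subseteq\mathcal{F}(R)$, the pure exact sequence $0\to F\to F^{++}\to F^{++}/F\to 0$ for part (1), the two applications of Proposition \ref{prop-3.7} plus the trivial inclusions for part (2), and the Horseshoe-type argument for closure under coproper extensions---coincides with the paper's, and your alternative derivation of the $\mathcal{FC}(R)$ half of (1) from the complete hereditary flat cotorsion pair is acceptable. However, the one step you yourself single out as ``genuinely non-formal'' rests on a false statement, so there is a real gap. You claim that over a left and right Noetherian ring, $\Tor_1^R(I,N)=0$ for all injective right $R$-modules $I$ forces $N$ to be flat, and you use this to conclude that the cosyzygies $M^j$ of a $\Hom_R(-,\mathcal{I}(R^{op})^+)$-exact coresolution by modules in $\mathcal{I}(R^{op})^+$ are flat. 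Take $R=\mathbb{Z}/4\mathbb{Z}$, a commutative Artinian quasi-Frobenius ring: every injective $R$-module is projective, so $\Tor_1^R(I,N)=0$ for every injective $I$ and \emph{every} $N$, yet $N=\mathbb{Z}/2\mathbb{Z}$ is not flat. Worse for your argument, the exact complex $\cdots\to R\xrightarrow{2}R\xrightarrow{2}R\to\cdots$ (note $R\cong (R_R)^+\in\mathcal{I}(R^{op})^+$ here) exhibits $\mathbb{Z}/2\mathbb{Z}$ as a member of $\widetilde{\cores\mathcal{I}(R^{op})^+}$ all of whose cosyzygies equal $\mathbb{Z}/2\mathbb{Z}$; the cosyzygies of such a coresolution are in general only Gorenstein flat, not flat, so your route to $\Ext^1_R(M^j,W)=0$ for $W\in\mathcal{FC}(R)$ collapses. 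There is no Baer-type criterion detecting flatness by $\Tor$-vanishing against injectives.

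The inclusion you actually need, $\widetilde{\cores\mathcal{I}(R^{op})^+}\subseteq\widetilde{\cores_{\mathcal{FC}(R)}\mathcal{F}(R)}$, is nonetheless true, and the paper's argument is both correct and much shorter: for $D\in\mathcal{FC}(R)$, $D$ is pure injective (\cite[Proposition 4.4(1)]{SZH2}), so the pure monomorphism $\sigma_D:D\to D^{++}$ splits and $D$ is a direct summand of $D^{++}\in\mathcal{I}(R^{op})^+$; hence any $\Hom_R(-,\mathcal{I}(R^{op})^+)$-exact sequence is automatically $\Hom_R(-,D)$-exact, with no flatness of cosyzygies needed. Replacing your cosyzygy argument by this direct-summand observation repairs the proof; everything else in your write-up goes through.
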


\begin{proof}
(1) It essentially follows from \cite[Proposition 4.4]{SZH2} and its proof. However, we still
give the proof in details.

Let $Q\in\mathcal{F}(R)$. By \cite[Corollary 2.21(b)]{GT}, there exists the following pure exact sequence
$$0\to Q\to Q^{++}\to Q^{++}/Q\to 0\eqno{(4.3)}$$
in $\Mod R$. Since $Q^+\in\mathcal{I}(R^{op})$ and $Q^{++}\in\mathcal{I}(R^{op})^+\cap\mathcal{F}(R)$
by \cite[Theorems 2.1 and 2.2]{F}, we have $Q^{++}/Q\in\mathcal{F}(R)$ by \cite[Lemma 5.2(a)]{HW}, and
so (4.3) is a $\Hom_{R}(-,\mathcal{I}(R^{op})^+)$-exact exact sequence by \cite[Lemma 4.13]{SZH2}.
It follows that $\mathcal{I}(R^{op})^+$ is an $\mathcal{I}(R^{op})^+$-coproper cogenerator for $\mathcal{F}(R)$.

Since $Q^{++}$ is pure injective by \cite[Proposition 5.3.7]{EJ}, we have $Q^{++}\in\mathcal{FC}(R)$
by \cite[Proposition 4.4(1)]{SZH2}. Notice that (4.3) is a $\Hom_{R}(-,\mathcal{FC}(R))$-exact exact sequence,
so $\mathcal{FC}(R)$ is an $\mathcal{FC}(R)$-coproper cogenerator for $\mathcal{F}(R)$.

(2) Since $\mathcal{I}(R^{op})^+\subseteq\mathcal{FC}(R)\subseteq\mathcal{F}(R)$
by \cite[Proposition 5.3.7]{EJ} and \cite[Lemma 4.13]{SZH2}, we have
$$\widetilde{\cores\mathcal{I}(R^{op})^+}\subseteq\widetilde{\cores_{\mathcal{I}(R^{op})^+}\mathcal{FC}(R)}
\subseteq\widetilde{\cores_{\mathcal{I}(R^{op})^+}\mathcal{F}(R)}\supseteq
\widetilde{\cores_{\mathcal{FC}(R)}\mathcal{F}(R)}\supseteq\widetilde{\cores\mathcal{F}(R)}.$$
By (1) and Proposition \ref{prop-3.7}, we have
$$\widetilde{\cores\mathcal{I}(R^{op})^+}=\widetilde{\cores_{\mathcal{I}(R^{op})^+}\mathcal{F}(R)}\ \ {\rm and}\ \
\widetilde{\cores\mathcal{FC}(R)}=\widetilde{\cores_{\mathcal{FC}(R)}\mathcal{F}(R)}.$$
Suppose that $M\in\widetilde{\cores_{\mathcal{I}(R^{op})^+}\mathcal{F}(R)}$
and $$0\to M \to F^0\to F^1\to\cdots\to F^i\to\cdots\eqno{(4.4)}$$
is a $\Hom_R(-,\mathcal{I}(R^{op})^+)$-exact exact sequence in $\Mod R$ with all $F^i$ flat.
Let $D\in\mathcal{FC}(R)$. Then $D^{++}\in\mathcal{I}(R^{op})^+$ by \cite[Theorem 2.1]{F}.
Since $D$ is pure injective by \cite[Proposition 4.4(1)]{SZH2}, $D$ is isomorphic to a direct summand of
$D^{++}$ by \cite[Theorem 2.27]{GT}. Notice that (4.4) is $\Hom_R(-,D^{++})$-exact, so it is also $\Hom_R(-,D)$-exact.
Thus $M\in\widetilde{\cores_{\mathcal{FC}(R)}\mathcal{F}(R)}$ and
$\widetilde{\cores_{\mathcal{I}(R^{op})^+}\mathcal{F}(R)}\subseteq
\widetilde{\cores_{\mathcal{FC}(R)}\mathcal{F}(R)}$.

Since $\mathcal{I}(R^{op})^+$ is closed under $\mathcal{I}(R^{op})^+$-coproper extensions by \cite[Horseshoe Lemma 1.7]{H},
the latter assertion follows.
\end{proof}

Recall from Remark \ref{rem-4.4}(7)(8) and \cite[Theorem 4.6]{SZH2} that
\begin{align*}
& {^{\bot}(\mathcal{I}(R^{op})^+)}\cap\widetilde{\cores_{\mathcal{I}(R^{op})^+}\mathcal{F}(R)}
={^{\bot}\mathcal{FC}(R)}\cap\widetilde{\cores_{\mathcal{FC}(R)}\mathcal{F}(R)}
={^{\bot}\mathcal{FC}(R)}\cap\widetilde{\cores\mathcal{FC}(R)}=\mathcal{GF}(R)\\
&\supseteq\mathcal{PGF}(R)={^{\bot}(\mathcal{I}(R^{op})^+)}\cap\widetilde{\cores_{\mathcal{I}(R^{op})^+}\mathcal{P}(R)}.
\end{align*}
In terms of the projective dimensions relative to $\widetilde{\cores\mathcal{F}(R)}$ and
all eight subcategories of $\Mod R$ that appear in the above relation,
we give some equivalent characterizations of $n$-Gorenstein rings as follows.

\begin{thm} \label{thm-4.13}
The following statements are equivalent.
\begin{enumerate}
\item[$(1)$] $R$ is $n$-Gorenstein.
\item[$(2)$] $\Gfd_RM\leq n$ for any $M\in\Mod R$.
\item[$(2)^{op}$] $\Gfd_{R^{op}}N\leq n$ for any $N\in\Mod R^{op}$.
\item[$(3)$] $^{\bot}(\mathcal{I}(R^{op})^+){\text -}\pd M\leq n$ and $^{\bot}(\mathcal{I}(R)^+){\text -}\pd N\leq n$
for any $M\in\Mod R$ and $N\in\Mod R^{op}$.
\item[$(4)$] $^{\bot}\mathcal{FC}(R){\text -}\pd M\leq n$ and $^{\bot}\mathcal{FC}(R^{op}){\text -}\pd N\leq n$
for any $M\in\Mod R$ and $N\in\Mod R^{op}$.
\item[$(5)$] $\widetilde{\cores_{\mathcal{I}(R^{op})^+}\mathcal{F}(R)}{\text -}\pd M\leq n$ and
$\widetilde{\cores_{\mathcal{I}(R)^+}\mathcal{F}(R^{op})}{\text -}\pd N\leq n$ for any $M\in\Mod R$ and $N\in\Mod R^{op}$.
\item[$(6)$] $\widetilde{\cores_{\mathcal{FC}(R)}\mathcal{F}(R)}{\text -}\pd M\leq n$ and
$\widetilde{\cores_{\mathcal{FC}(R^{op})}\mathcal{F}(R^{op})}{\text -}\pd N\leq n$ for any $M\in\Mod R$ and $N\in\Mod R^{op}$.
\item[$(7)$] $\widetilde{\cores\mathcal{FC}(R)}{\text -}\pd M\leq n$ and
$\widetilde{\cores\mathcal{FC}(R^{op})}{\text -}\pd N\leq n$ for any $M\in\Mod R$ and $N\in\Mod R^{op}$.
\item[$(8)$] $\widetilde{\cores\mathcal{F}(R)}{\text -}\pd M\leq n$ and
$\widetilde{\cores\mathcal{F}(R^{op})}{\text -}\pd N\leq n$ for any $M\in\Mod R$ and $N\in\Mod R^{op}$.
\item[$(9)$] $\mathcal{PGF}(R){\text -}\pd M\leq n$ for any $M\in\Mod R$.
\item[$(9)^{op}$] $\mathcal{PGF}(R^{op}){\text -}\pd N\leq n$ for any $N\in\Mod R^{op}$.
\item[$(10)$] $\widetilde{\cores_{\mathcal{I}(R^{op})^+}\mathcal{P}(R)}{\text -}\pd M\leq n$ and
$\widetilde{\cores_{\mathcal{I}(R)^+}\mathcal{P}(R^{op})}{\text -}\pd N\leq n$ for any $M\in\Mod R$ and $N\in\Mod R^{op}$.
\end{enumerate}
\end{thm}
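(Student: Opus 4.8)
The argument should follow the template of the proofs of Theorems~\ref{thm-4.9} and~\ref{thm-4.11}. Most of the implications among $(2)$--$(10)$ come for free from the chain of subcategory identities recorded just before the theorem together with Lemma~\ref{lem-4.12}: since $\mathcal{GF}(R)$ is contained in each of ${^{\bot}(\mathcal{I}(R^{op})^+)}$, ${^{\bot}\mathcal{FC}(R)}$, $\widetilde{\cores_{\mathcal{I}(R^{op})^+}\mathcal{F}(R)}$, $\widetilde{\cores_{\mathcal{FC}(R)}\mathcal{F}(R)}$ and $\widetilde{\cores\mathcal{FC}(R)}$, and $\mathcal{PGF}(R)$ is contained in $\mathcal{GF}(R)$ and in $\widetilde{\cores_{\mathcal{I}(R^{op})^+}\mathcal{P}(R)}$, the inequality $\mathscr{X}{\text -}\pd M\leq\mathscr{Y}{\text -}\pd M$ for $\mathscr{Y}\subseteq\mathscr{X}$ gives $(2)+(2)^{op}\Rightarrow (3),(4),(5),(6),(7)$ and $(9)+(9)^{op}\Rightarrow (2)+(2)^{op},(10)$. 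Lemma~\ref{lem-4.12}(2) gives $(3)\Leftrightarrow (4)$ and $(5)\Leftrightarrow (6)\Leftrightarrow (7)$, and, since $\widetilde{\cores\mathcal{F}(R)}$ and $\widetilde{\cores_{\mathcal{I}(R^{op})^+}\mathcal{P}(R)}$ are both contained in $\widetilde{\cores_{\mathcal{I}(R^{op})^+}\mathcal{F}(R)}$, also $(8)\Rightarrow (5)$ and $(10)\Rightarrow (5)$. It therefore remains to prove $(1)\Rightarrow (2)+(2)^{op}$, $(1)\Rightarrow (9)+(9)^{op}$, $(1)\Rightarrow (8)$, $(2)+(2)^{op}\Rightarrow (1)$, $(9)+(9)^{op}\Rightarrow (1)$, $(5)\Rightarrow (1)$ and $(4)\Rightarrow (1)$.

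For the forward direction, $(1)\Rightarrow (2)+(2)^{op}$ is the known bound $\Gfd_R M\leq n$ over an $n$-Gorenstein ring (\cite[Theorem~12.3.1]{EJ}); and $(1)\Rightarrow (9)+(9)^{op}$ follows because over an $n$-Gorenstein left and right Noetherian ring $\mathcal{PGF}(R)=\mathcal{SGF}(R)\cap\mathcal{GF}(R)=\mathcal{GP}(R)\cap\mathcal{GF}(R)=\mathcal{GP}(R)$, by \cite[Lemma~3]{I}, \cite[Corollary~2.8]{DLM} and the inclusion $\mathcal{GP}(R)\subseteq\mathcal{GF}(R)$, so that $\mathcal{PGF}(R){\text -}\pd M=\Gpd_R M\leq n$ by Theorem~\ref{thm-4.9}. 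The one forward implication needing separate work is $(1)\Rightarrow (8)$: the plan is to show $\widetilde{\cores\mathcal{F}(R)}=\mathcal{GF}(R)$ whenever $R$ is $n$-Gorenstein. The inclusion ``$\subseteq$'' holds for every ring, since a $\Hom_R(-,\mathcal{F}(R))$-exact flat coresolution is in particular $\Hom_R(-,\mathcal{I}(R^{op})^+)$-exact, and dimension shifting along the hereditary flat cotorsion pair places the module in ${^{\bot}\mathcal{FC}(R)}$; for ``$\supseteq$'', over an $n$-Gorenstein ring Gorenstein flat coincides with Gorenstein projective, which equals the left orthogonal of the class of modules of finite projective dimension, and every flat module has finite projective dimension, so for $M\in\mathcal{GF}(R)$ the cosyzygies of a complete flat resolution lie in $\mathcal{GF}(R)$ and hence have vanishing positive $\Ext$ into every flat module, which makes the corresponding flat coresolution of $M$ a $\Hom_R(-,\mathcal{F}(R))$-exact one. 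Together with $(1)\Rightarrow (2)$ this yields $\widetilde{\cores\mathcal{F}(R)}{\text -}\pd M=\Gfd_R M\leq n$.

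For the implications back to $(1)$, the engine is Lemma~\ref{lem-4.8} (via Corollary~\ref{cor-3.4}). Each of $\mathcal{GF}(R)$ and $\mathcal{PGF}(R)$ is resolving and admits, respectively, $\mathcal{F}(R)$ and $\mathcal{P}(R)$ as an $\mathcal{I}(R^{op})^+$-coproper cogenerator (Remark~\ref{rem-4.4}(7)), and $\widetilde{\cores_{\mathcal{I}(R^{op})^+}\mathcal{F}(R)}$ is closed under $\mathcal{I}(R^{op})^+$-coproper extensions by Lemma~\ref{lem-4.12}(2) and admits $\mathcal{F}(R)$ as an $\mathcal{I}(R^{op})^+$-coproper cogenerator (take the first cosyzygy of a coresolution); since all three cogenerators lie in $\mathcal{F}(R)$, applying Lemma~\ref{lem-4.8} to these three subcategories over $R$ and over $R^{op}$ converts $(2)+(2)^{op}$, $(9)+(9)^{op}$ and $(5)$ into $\id_{R^{op}}R\leq n$ and $\id_R R\leq n$, i.e. into $(1)$. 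Finally, $(4)\Rightarrow (1)$ I would argue directly: $(4)$ provides, for every $M\in\Mod R$, a resolution of length $\leq n$ by objects of ${^{\bot}\mathcal{FC}(R)}$, hence $\Ext_R^{\geq n+1}(M,D)=0$ for all $D\in\mathcal{FC}(R)$; taking $D=I^+$ with $I\in\Mod R^{op}$ injective (so $I^+\in\mathcal{I}(R^{op})^+\subseteq\mathcal{FC}(R)$) and using $(4.2)$ gives $\fd_{R^{op}}I\leq n$ for all such $I$, and symmetrically $\fd_R I'\leq n$ for all injective $I'\in\Mod R$, which is the classical characterization of $n$-Gorenstein rings (cf. \cite[Theorem~12.3.1]{EJ}).

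The step I expect to be the main obstacle is $(1)\Rightarrow (8)$, that is, the identification $\widetilde{\cores\mathcal{F}(R)}=\mathcal{GF}(R)$ over $n$-Gorenstein rings, which relies on the standard but not entirely trivial facts that over such rings Gorenstein flat equals Gorenstein projective and equals the left orthogonal of the class of modules of finite projective dimension, and that flat modules there have finite projective dimension. A secondary, purely organizational difficulty is verifying, for each of $\mathcal{GF}(R)$, $\mathcal{PGF}(R)$ and $\widetilde{\cores_{\mathcal{I}(R^{op})^+}\mathcal{F}(R)}$, that Lemma~\ref{lem-4.8} genuinely applies --- i.e. that each is $\mathscr{E}$-precoresolving for a suitable $\mathscr{E}$ with an $\mathscr{E}$-coproper cogenerator contained in $\mathcal{F}(R)$ --- using Remark~\ref{rem-4.4} and Lemma~\ref{lem-4.12}; everything else is a formal chase through the inclusions listed in the first paragraph.
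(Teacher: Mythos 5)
Your overall architecture matches the paper's: trivial containment implications among $(2)$--$(10)$, Lemma \ref{lem-4.12} to identify the coresolution classes, and Lemma \ref{lem-4.8} as the engine driving the conditions back to $(1)$. But there is one genuine gap. The theorem lists $(2)$, $(2)^{op}$, $(9)$, $(9)^{op}$ as \emph{separate} equivalent conditions, so each one-sided condition alone must imply $(1)$; you only prove the conjunctions $(2)+(2)^{op}\Rightarrow(1)$ and $(9)+(9)^{op}\Rightarrow(1)$. Lemma \ref{lem-4.8} applied to $\mathcal{GF}(R)$ converts $(2)$ into $\id_{R^{op}}R\leq n$ only; your scheme has no route from $(2)$ alone to $\id_RR\leq n$. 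The paper closes this by observing that $\mathcal{GF}(R)\subseteq{^{\bot}(\mathcal{I}(R^{op})^+)}$, so $(2)$ already gives $\Ext_R^{\geq n+1}(M,(_RR)^{++})=0$ for all $M$, whence $\id_R(_RR)^{++}\leq n$ and then $\id_RR=\fd_{R^{op}}(_RR)^+\leq n$ by Fieldhouse's theorems; the same device handles $(9)$ via $(9)\Rightarrow(2)$. This one-sidedness is not cosmetic --- it is exactly what makes the theorem bear on the Gorenstein symmetric conjecture as discussed in the introduction --- so without this step your argument proves a strictly weaker statement.

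Two further points. First, your $(1)\Rightarrow(8)$ is done the hard way: the paper simply notes $\widetilde{\cores_{\mathcal{F}(R)}\mathcal{P}(R)}\subseteq\widetilde{\cores\mathcal{F}(R)}$ and quotes condition $(7)$ of Theorem \ref{thm-4.9}, whereas you propose to prove $\widetilde{\cores\mathcal{F}(R)}=\mathcal{GF}(R)$ over Gorenstein rings. The inclusion $\mathcal{GF}(R)\subseteq\widetilde{\cores\mathcal{F}(R)}$ (the only direction you actually need) is correct as you argue it, but your claim that the reverse inclusion ``holds for every ring'' is unjustified: the dimension-shifting you invoke only identifies $\Ext^1_R(M,D)$ with $\Ext^{k+1}_R(C^k,D)$ for higher cosyzygies $C^k$, and this vanishes only when $D$ has finite injective dimension --- a Gorenstein phenomenon, not a general one. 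Second, attributing $(3)\Leftrightarrow(4)$ to Lemma \ref{lem-4.12}(2) is imprecise, since that lemma concerns the coresolution classes rather than the left orthogonals; the containments you need ($\mathcal{I}(R^{op})^+\subseteq\mathcal{FC}(R)$, and every $D\in\mathcal{FC}(R)$ a direct summand of $D^{++}\in\mathcal{I}(R^{op})^+$) do appear in its proof, and in any case your $(4)\Rightarrow(1)$ argument, which is sound and essentially the paper's $(3)\Rightarrow(1)$ in disguise, applies verbatim to $(3)$ since the test modules $I^+$ lie in $\mathcal{I}(R^{op})^+$.
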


\begin{proof}
The implications $(2)+(2)^{op}\Rightarrow (3)+(4)$, $(9)\Rightarrow (2)$, $(9)^{op}\Rightarrow (2)^{op}$ and
$(9)+(9)^{op}\Rightarrow (10)\Rightarrow (5)$ are trivial.
By Lemma \ref{lem-4.12}, we have $(5)\Leftrightarrow (6)\Leftrightarrow (7)\Leftarrow (8)$.

Since $\widetilde{\cores\mathcal{F}(R)}\supseteq\widetilde{\cores_{\mathcal{F}(R)}\mathcal{P}(R)}$
and $\widetilde{\cores\mathcal{F}(R^{op})}\supseteq\widetilde{\cores_{\mathcal{F}(R^{op})}\mathcal{P}(R^{op})}$,
we have $(1)\Rightarrow (8)$ by Theorem \ref{thm-4.9}.

By \cite[Theorem 2.2]{F} and \cite[Lemma 4.13]{SZH2}, we have $\mathcal{I}(R^{op})^+\subseteq\mathcal{FC}(R)$
and $\mathcal{I}(R)^+\subseteq\mathcal{FC}(R^{op})$. Thus $^{\bot}(\mathcal{I}(R^{op})^+)\supseteq{^{\bot}\mathcal{FC}(R)}$
and $^{\bot}(\mathcal{I}(R)^+)\supseteq{^{\bot}\mathcal{FC}(R^{op})}$, and the implication $(4)\Rightarrow (3)$ follows.


$(1)\Rightarrow (9)+(9)^{op}$ By (1) and \cite[Theorem 2]{I}, we have
$\mathcal{SGF}(R)=\mathcal{PGF}(R)$ and $\mathcal{SGF}(R^{op})=\mathcal{PGF}(R^{op})$.
Now the assertion follows from Theorem \ref{thm-4.9}.

$(3)\Rightarrow (1)$ By \cite[Theorem 2.1]{F}, we have $(_RR)^+\in\mathcal{I}(R^{op})$ and $(R_R)^+\in\mathcal{I}(R)$.
Then by (3) and dimension shifting, it is easy to see that
$$\Ext_R^{\geq n+1}(M,(_RR)^{++})=0=\Ext_{R^{op}}^{\geq n+1}(N,(R_R)^{++})$$
for any $M\in\Mod R$ and $N\in\Mod R^{op}$.
It implies $\id_{R}(_RR)^{++}\leq n$ and $\id_{R^{op}}(R_R)^{++}\leq n$. It follows from \cite[Theorems 2.1 and 2.2]{F}
that $\id_{R}R=\fd_{R^{op}}(_RR)^{+}\leq n$ and $\id_{R^{op}}R=\fd_{R}(R_R)^{+}\leq n$.

$(2)\Rightarrow (1)$ Similar to the proof of $(3)\Rightarrow (1)$, we have $\id_{R}R\leq n$.
By Remark \ref{rem-4.4}(7), we have that $\mathcal{GF}(R)$ is resolving and admits an
$\mathcal{I}_C(R^{op})^+$-coproper cogenerator $\mathcal{F}(R)$.
Thus $\id_{R^{op}}R\leq n$ by (2) and Lemma \ref{lem-4.8}.

Symmetrically, we get $(2)^{op}\Rightarrow (1)$.

$(5)\Rightarrow (1)$ 
It follows from Lemma \ref{lem-4.12}(2) that $\widetilde{\cores_{\mathcal{I}(R^{op})^+}\mathcal{F}(R)}$
is an $\mathcal{I}(R^{op})^+$-precoresolving subcategory of $\Mod R$
admitting an $\mathcal{I}(R^{op})^+$-coproper cogenerator $\mathcal{F}(R)$.
Thus $\id_{R^{op}}R\leq n$ by (5) and Lemma \ref{lem-4.8}. Symmetrically, we have $\id_{R}R\leq n$.
\end{proof}

\subsection{$C$-Gorenstein flat modules}

In this subsection, $R,S$ are arbitrary rings and $_RC_S$ is a semidualizing bimodule.

\begin{lem} \label{lem-4.14}
For any $M\in\Mod R$, we have $\fd_SM_*=\id_{S^{op}}M^+\otimes_RC$.
\end{lem}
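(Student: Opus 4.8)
**The plan is to prove the identity $\fd_S M_* = \id_{S^{op}} M^+\otimes_R C$ by relating both sides to the vanishing of appropriate $\Tor$ and $\Ext$ functors via the duality $(-)^+ = \Hom_{\mathbb Z}(-,\mathbb Q/\mathbb Z)$.** First I would set up the two standard translation tools. On the one hand, for any $S$-module $X$, the module $X$ is flat if and only if $X^+$ is injective as an $S^{op}$-module (this is the classical Lambek-type characterization), and more generally $\fd_S X = \id_{S^{op}} X^+$ whenever either side is finite — indeed $[\Tor^S_n(L,X)]^+ \cong \Ext^n_{S^{op}}(L, X^+)$ for $L \in \Mod S^{op}$, together with the fact that faithful injectivity of $\mathbb Q/\mathbb Z$ detects vanishing. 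On the other hand, I need the Hom-tensor adjunction linking $M_*$ and $M^+ \otimes_R C$. The key computation is that for any right $S$-module $N$,
\[
N\otimes_S M_* = N\otimes_S \Hom_R(C,M) \quad\text{relates to}\quad \Hom_{R}(C, N\otimes_S M)\ ?
\]
— more precisely, I would use the natural isomorphism
\[
(N\otimes_S \Hom_R(C,M))^+ \cong \Hom_S(N, \Hom_R(C,M)^+) \cong \Hom_S(N, (M^+\otimes_R C)),
\]
where the last step rests on the adjunction isomorphism $\Hom_R(C,M)^+ \cong M^+\otimes_R C$, valid because $_RC$ is finitely generated and has a degreewise finite projective resolution (semidualizing condition (a1)). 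Actually the cleanest route is: $\Hom_R(C,M)^+ \cong \Hom_{\mathbb Z}(\Hom_R(C,M), \mathbb Q/\mathbb Z)$, and by the standard Hom-evaluation/tensor-evaluation isomorphism for a finitely presented module $_RC$, this is naturally isomorphic to $M^+ \otimes_R C$.

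Next I would promote this to the derived level. Both sides of the claimed equation are characterized by vanishing of derived functors: $\fd_S M_* \le n$ iff $\Tor^S_{n+1}(S^{op}\text{-modules}, M_*) = 0$ iff $\Tor^S_i(-, M_*)=0$ for all $i > n$; and $\id_{S^{op}}(M^+\otimes_R C)\le n$ iff $\Ext^{i}_{S^{op}}(-, M^+\otimes_R C) = 0$ for all $i>n$. Using $[\Tor^S_i(N, M_*)]^+ \cong \Ext^i_{S^{op}}(N, (M_*)^+)$ and then $(M_*)^+ = \Hom_R(C,M)^+ \cong M^+\otimes_R C$, I get
\[
[\Tor^S_i(N, M_*)]^+ \cong \Ext^i_{S^{op}}(N, M^+\otimes_R C)
\]
naturally in $N \in \Mod S^{op}$, for all $i \ge 0$. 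Since $\mathbb Q/\mathbb Z$ is a faithfully injective $\mathbb Z$-module, $\Tor^S_i(N,M_*) = 0$ for all $N$ if and only if its character module vanishes for all $N$ if and only if $\Ext^i_{S^{op}}(N, M^+\otimes_R C) = 0$ for all $N$. Therefore the two conditions "$\fd_S M_* \le n$" and "$\id_{S^{op}}(M^+\otimes_R C)\le n$" agree for every $n \ge 0$ (including the case where both are infinite), which is exactly the asserted equality.

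**The main obstacle I expect is establishing the natural isomorphism $\Hom_R(C,M)^+ \cong M^+\otimes_R C$ with the correct bimodule/module structure and naturality**, since this is where the semidualizing hypotheses on $_RC_S$ actually enter. The subtlety is that tensor-evaluation $M^+ \otimes_R C \to \Hom_R(C, M^+)^{\cdots}$ type maps are isomorphisms precisely when $C$ is finitely presented (over $R$), which is guaranteed by (a1), and one must track that the $S$-module structure on $\Hom_R(C,M)$ (coming from $C_S$) dualizes to the $S^{op}$-action on $M^+\otimes_R C$ correctly so that the character-module isomorphism is $S^{op}$-linear. Once that single natural isomorphism is in hand, everything else is a formal consequence of adjunction, the behavior of $(-)^+$ on $\Tor$, and faithful injectivity of $\mathbb Q/\mathbb Z$; no delicate homological algebra beyond dimension shifting is needed. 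I would also remark that this lemma is the engine behind the later claim (Theorem~\ref{thm-4.17}) that $M$ is $C$-flat iff $M^+$ is $C$-injective, which is the $n=0$ case read through the definitions of $\mathcal F_C(R)$ and $\mathcal I_C(R^{op})$.
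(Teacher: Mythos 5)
Your proof is correct and takes essentially the same route as the paper: the paper obtains $(M_*)^+\cong M^+\otimes_RC$ by citing a standard character-module isomorphism for the finitely presented module $_RC$ (condition (a1)), and then invokes Fieldhouse's theorem $\fd_SX=\id_{S^{op}}X^+$, which is precisely the $\Tor$--$\Ext$ duality plus faithful injectivity of $\mathbb{Q}/\mathbb{Z}$ that you spell out. The only difference is that you prove the two cited ingredients rather than quoting them.
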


\begin{proof}
By \cite[Lemma 2.16(c)]{GT}, we have
$$(M_*)^+\cong M^+\otimes_RC.$$
It follows from \cite[Theorem 2.1]{F} that
$$\fd_SM_*=\id_{S^{op}}(M_*)^+=\id_{S^{op}}M^+\otimes_RC.$$
\end{proof}

We also need the following observation.

\begin{lem}\label{lem-4.15}
Let $n\geq 0$. Then
\begin{enumerate}
\item[$(1)$] For any $M\in\Mod R$, we have
$$\mathcal{F}_C(R){\text -}\pd_RM\leq n\Leftrightarrow
M\in\mathcal{B}_C(R)\ {\rm and}\ \fd_SM_*\leq n.$$
\item[$(2)$] For any $N\in\Mod R^{op}$, we have
$$\mathcal{I}_C(R^{op}){\text -}\id_{R^{op}}N\leq n\Leftrightarrow
N\in\mathcal{A}_C(R^{op})\ {\rm and}\ \id_{S^{op}}N\otimes_RC\leq n.$$
\end{enumerate}
\end{lem}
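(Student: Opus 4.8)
The plan is to deduce (1) directly from the Foxby equivalence attached to $C$, and then obtain (2) by running the order-reversed version of the same argument. The facts I would take as known are: every flat left $S$-module lies in the Auslander class $\mathcal{A}_C(S)$, so in particular $\mathcal{F}_C(R)\subseteq\mathcal{B}_C(R)$ and $\mu_F\colon F\to\Hom_R(C,C\otimes_SF)$ is an isomorphism for flat $F$; for $M\in\mathcal{B}_C(R)$ one has $\Ext_R^{\geq1}(C,M)=0$, $\Tor^S_{\geq1}(C,M_*)=0$ and $\theta_M\colon C\otimes_SM_*\to M$ an isomorphism (Definition \ref{def-4.3}); and, crucially, that $\mathcal{B}_C(R)$ is coresolving in $\Mod R$, hence closed under cokernels of monomorphisms (Remark \ref{rem-4.4}(6)(b)). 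Dually, for (2) I would use that every injective $S^{op}$-module lies in the Bass class $\mathcal{B}_C(S^{op})$, so $\mathcal{I}_C(R^{op})\subseteq\mathcal{A}_C(R^{op})$ and $(I_*)\otimes_RC\cong I$ for injective $I$, together with the fact that $\mathcal{A}_C(R^{op})$ is resolving, hence closed under kernels of epimorphisms (Remark \ref{rem-4.4}(6)(a)).

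For the implication ``$\Leftarrow$'' in (1): given $M\in\mathcal{B}_C(R)$ with $\fd_SM_*\leq n$, if $n=0$ then $M_*$ is flat and $M\cong C\otimes_SM_*\in\mathcal{F}_C(R)$. If $n\geq1$, I would take a flat resolution $0\to F_n\to\cdots\to F_0\to M_*\to0$ in $\Mod S$ and apply $C\otimes_S-$; since $F_\bullet$ is a flat resolution of $M_*$, the $i$-th homology of $C\otimes_SF_\bullet$ is $\Tor^S_i(C,M_*)$, which vanishes for all $i\geq1$ because $M\in\mathcal{B}_C(R)$ (this is where $n\geq1$ is used, to also kill the top Tor), while in degree $0$ one gets $C\otimes_SM_*\cong M$ through $\theta_M$. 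Thus $0\to C\otimes_SF_n\to\cdots\to C\otimes_SF_0\to M\to0$ is exact with all terms in $\mathcal{F}_C(R)$, giving $\mathcal{F}_C(R){\text -}\pd_RM\leq n$.

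For ``$\Rightarrow$'' in (1): choose an exact sequence $0\to C\otimes_SF_n\to\cdots\to C\otimes_SF_0\to M\to0$ with each $F_i$ flat, and let $\Omega^j$ ($0\leq j\leq n$) be its syzygies, so $\Omega^0=M$, $\Omega^n=C\otimes_SF_n$, and there are short exact sequences $0\to\Omega^{j+1}\to C\otimes_SF_j\to\Omega^j\to0$ for $0\leq j\leq n-1$. Each $C\otimes_SF_j$ lies in $\mathcal{F}_C(R)\subseteq\mathcal{B}_C(R)$, and $\mathcal{B}_C(R)$ is closed under cokernels of monomorphisms, so descending induction on $j$ gives $\Omega^j\in\mathcal{B}_C(R)$ for every $j$, in particular $M\in\mathcal{B}_C(R)$. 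Applying $\Hom_R(C,-)$ to each of these short exact sequences keeps it short exact, since $\Ext^1_R(C,\Omega^{j+1})=0$; splicing the results and using $\Hom_R(C,C\otimes_SF_j)\cong F_j$ yields a flat resolution $0\to F_n\to\cdots\to F_0\to M_*\to0$ of length $\leq n$, so $\fd_SM_*\leq n$.

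Part (2) is handled by the analogue of the above with $C\otimes_S-$ replaced by $-\otimes_RC\colon\Mod R^{op}\to\Mod S^{op}$, with $\Hom_R(C,-)$ replaced by $\Hom_{S^{op}}(C,-)$, with ``flat $S$-module / $\fd_S$'' replaced by ``injective $S^{op}$-module / $\id_{S^{op}}$'', with $\mathcal{B}_C(R)$ replaced by $\mathcal{A}_C(R^{op})$, and with $\Tor$-vanishing replaced by $\Ext$-vanishing: in the ``$\Leftarrow$'' direction one takes an injective coresolution of $N\otimes_RC$ over $S^{op}$ and applies $\Hom_{S^{op}}(C,-)$, using $\Ext^{\geq1}_{S^{op}}(C,N\otimes_RC)=0$ and the isomorphism $\mu_N$; in the ``$\Rightarrow$'' direction the cosyzygies of an $\mathcal{I}_C(R^{op})$-coresolution of $N$ are kernels of epimorphisms between objects of the resolving class $\mathcal{A}_C(R^{op})$, hence all lie in $\mathcal{A}_C(R^{op})$, and then $-\otimes_RC$ keeps each short exact sequence exact by the vanishing $\Tor^R_{1}(\,\cdot\,,C)=0$. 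The main thing I would watch out for is precisely this matching of exactness properties with the two sides of the two statements --- the projective-dimension statement (1) needs $\mathcal{B}_C(R)$ closed under cokernels of monomorphisms while the injective-dimension statement (2) needs $\mathcal{A}_C(R^{op})$ closed under kernels of epimorphisms --- together with the bookkeeping of the module structures carried by $(-)_*$ and $-\otimes_RC$; these are routine but easy to get backwards.
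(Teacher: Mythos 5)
Your proposal is correct. Note, though, that the paper itself does not argue from scratch here: its proof of Lemma \ref{lem-4.15} is a two-line citation, first to [HW, Corollary 6.1] for the inclusions $\mathcal{F}_C(R){\text -}\pd^{<\infty}\subseteq\mathcal{B}_C(R)$ and $\mathcal{I}_C(R^{op}){\text -}\id^{<\infty}\subseteq\mathcal{A}_C(R^{op})$, and then to [TH3, Lemma 2.6(1)(3)] for the identification of the dimensions. What you have written is, in effect, a direct proof of the content of those two cited results via the Foxby equivalence: the ``$\Rightarrow$'' direction of your argument (syzygies of an $\mathcal{F}_C(R)$-resolution lie in the coresolving class $\mathcal{B}_C(R)$, then apply $\Hom_R(C,-)$ using the $\Ext$-vanishing) is exactly the Holm--White inclusion, and the ``$\Leftarrow$'' direction (apply $C\otimes_S-$ to a flat resolution of $M_*$, using $\Tor^S_{\geq 1}(C,M_*)=0$ and $\theta_M$) is the Tang--Huang half. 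The bookkeeping you flag at the end --- $\mathcal{B}_C(R)$ closed under cokernels of monomorphisms for (1) versus $\mathcal{A}_C(R^{op})$ closed under kernels of epimorphisms for (2) --- is matched correctly to the two statements. The only blemish is the parenthetical claim that ``$n\geq 1$ is used to kill the top Tor'': the vanishing of $\Tor^S_n(C,M_*)$ is what gives exactness at the left end of the tensored complex in every case, and the case split on $n$ is unnecessary; this does not affect correctness. So your argument buys self-containedness at the cost of reproving two known lemmas, while the paper simply outsources them.
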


\begin{proof}
By \cite[Corollary 6.1]{HW}, we have
$$\mathcal{F}_C(R){\text -}\pd^{<\infty}\subseteq\mathcal{B}_C(R)\ {\rm and}\
\mathcal{I}_C(R^{op}){\text -}\id^{<\infty}\subseteq\mathcal{A}_C(R^{op}).$$
Then the assertions follow from \cite[Lemma 2.6(1)(3)]{TH3}.
\end{proof}

For any $M\in\Mod R$, we have the following canonical evaluation homomorphism
$$\sigma_M:M\to M^{++}$$
defined by $\sigma_M(x)(\alpha)=\alpha(x)$ for any $x\in M$ and $\alpha\in M^+$.

\begin{lem} \label{lem-4.16}
\begin{enumerate}
\item[]
\item[$(1)$]
Let $I$ be an injective right $S$-module. Then $(I_*)^{++}\cong(I^{++})_*$.
Moreover, $(I_*)^+\in\mathcal{F}_C(R)$ if $S$ is a right coherent ring.
\item[$(2)$]
Let $f:M_1^+\to M_2^+$ be a homomorphism in $\Mod R^{op}$ with $M_1,M_2\in\Mod R$.
If $M_1$ is pure injective, then there exists a homomorphism $g:M_2\to M_1$ in $\Mod R$
such that $f=g^+$.
\end{enumerate}
\end{lem}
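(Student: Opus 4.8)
The plan is to handle the two parts separately, treating (1) with the Hom--tensor formalism and (2) via the splitting produced by pure-injectivity.

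For (1), I would start from the right-module analogue of \cite[Lemma~2.16(c)]{GT} (the identity already used in the proof of Lemma~\ref{lem-4.14}): since $C_S$ is finitely presented (a consequence of $_RC_S$ being semidualizing), there is a natural isomorphism $(I_*)^+=\bigl(\Hom_{S^{op}}(C,I)\bigr)^+\cong C\otimes_S I^+$ of left $R$-modules, valid for every $I\in\Mod S^{op}$. Applying $(-)^+$ once more and using the Hom--tensor adjunction $\bigl(C\otimes_S-\bigr)^+\cong\Hom_{S^{op}}\bigl(C,(-)^+\bigr)$ gives
$$(I_*)^{++}\cong\bigl(C\otimes_S I^+\bigr)^+\cong\Hom_{S^{op}}\bigl(C,I^{++}\bigr)=(I^{++})_*,$$
and one checks that these identifications respect the $R^{op}$-module structures, which yields the first assertion (injectivity of $I$ is not actually needed for it). For the ``moreover'' I would now invoke that $I$ is injective: then $I$ is FP-injective (absolutely pure) in $\Mod S^{op}$, and since $S$ is right coherent this forces $I^+$ to be a flat left $S$-module, by the standard fact that over a right coherent ring the character module of an FP-injective right $S$-module is flat. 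Hence $(I_*)^+\cong C\otimes_S I^+\in\mathcal{F}_C(R)$ by the very definition of $\mathcal{F}_C(R)$.

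For (2), I would use that $M_1$ being pure injective is equivalent to the canonical map $\sigma_{M_1}\colon M_1\to M_1^{++}$ being a split monomorphism; fix a retraction $\rho\colon M_1^{++}\to M_1$ with $\rho\sigma_{M_1}=\id_{M_1}$. Given $f\colon M_1^+\to M_2^+$, form $f^+\colon M_2^{++}\to M_1^{++}$ and set $g:=\rho\circ f^+\circ\sigma_{M_2}\colon M_2\to M_1$. It then remains to verify $f=g^+$. Unravelling the definitions, for $\phi\in M_1^+$ and $y\in M_2$ one gets $g^+(\phi)(y)=\phi\bigl(\rho(f^+\sigma_{M_2}(y))\bigr)$ while $f(\phi)(y)=\bigl(f^+\sigma_{M_2}(y)\bigr)(\phi)$, so the equality $g^+=f$ reduces to the compatibility $\sigma_{M_1}\rho(\zeta)=\zeta$ for the elements $\zeta=f^+\sigma_{M_2}(y)\in M_1^{++}$ that occur; this is the point at which the pure-injectivity of $M_1$ must be brought to bear, alongside naturality of $\sigma$ and the triangle identity $(\sigma_X)^+\circ\sigma_{X^+}=\id_{X^+}$ for the character functor.

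The routine part is (1): once the Göbel--Trlifaj and Hom--tensor identities are in place it is bookkeeping, the only genuine input being the coherence hypothesis for the flatness of $I^+$. The delicate step---where I would concentrate the effort---is the verification $f=g^+$ in (2): showing that the morphism manufactured from the splitting $\rho$ really reproduces $f$ after dualising, i.e.\ that the relevant evaluation elements land in $\Im\sigma_{M_1}$. Arranging that diagram chase, via the triangle identities and naturality of $\sigma$, so that the correction term vanishes is the main obstacle.
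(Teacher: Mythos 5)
Your part (1) is correct and is essentially the paper's own argument: the isomorphisms $(I_*)^+\cong C\otimes_SI^+$ and $(C\otimes_SI^+)^+\cong\Hom_{S^{op}}(C,I^{++})$ from \cite[Lemma 2.16]{GT} give the first claim, and the flatness of $I^+$ over a right coherent ring (Cheatham--Stone) gives the second.

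Part (2), however, is not a proof. You construct the same candidate $g=\rho f^{+}\sigma_{M_2}$ that the paper uses, you correctly reduce $f=g^{+}$ to the containment $f^{+}\sigma_{M_2}(M_2)\subseteq\Im\sigma_{M_1}$ (equivalently, $\sigma_{M_1}\rho(\zeta)=\zeta$ for the relevant elements $\zeta$), and then you stop, declaring this ``the main obstacle''. That obstacle is a genuine gap and not a routine diagram chase: pure injectivity of $M_1$ yields only a \emph{splitting} of $\sigma_{M_1}$, never its surjectivity, and $\sigma_{M_1}\rho$ is the identity only on $\Im\sigma_{M_1}$. In fact the required containment fails in general. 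Take $R=k$ a field, $M_1$ an infinite-dimensional $k$-space (pure injective, as every $k$-module is) and $M_2=k$; then $(-)^+$ is ordinary vector-space duality, the maps of the form $g^{+}$ with $g\colon k\to M_1$ are exactly the evaluation functionals, i.e.\ $\Im\sigma_{M_1}\subsetneq M_1^{++}=\Hom_k(M_1^{+},M_2^{+})$, so a non-evaluation $f$ admits no $g$ at all, and indeed $\zeta=f^{+}\sigma_{M_2}(1)=f\notin\Im\sigma_{M_1}$. (For what it is worth, the paper's printed proof elides exactly the same point: it infers $\beta^{+}=\sigma_{M_1^{+}}$ from the fact that both are sections of the split epimorphism $(\sigma_{M_1})^{+}$, which is not a valid inference.) So the step you isolated as the main difficulty is precisely where the argument cannot be completed as set up; closing it would require additional hypotheses on $f$ (for instance that it already arises from the identifications used in the application in Theorem 4.17) or a reformulation of the statement.
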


\begin{proof}
(1) Let $I$ be an injective right $S$-module. Then $(I_*)^{+}\cong C\otimes_SI^+$
by \cite[Lemma 2.16(c)]{GT}, and hence
$$(I_*)^{++}\cong(C\otimes_SI^+)^{+}\cong(I^{++})_*$$
by \cite[Lemma 2.16(a)]{GT}. If $S$ is a right coherent ring, then $I^+$ is a flat
left $S$-module by \cite[Theorem 1]{CS}, and hence $(I_*)^{+}\cong C\otimes_SI^+\in\mathcal{F}_C(R)$.

(2) Let $f:M_1^+\to M_2^+$ be a homomorphism in $\Mod R^{op}$ with $M_1,M_2\in\Mod R$.
If $M_1$ is pure injective, then $\sigma_{M_1}:M_1\to M_1^{++}$ is a split monomorphism in $\Mod R$
by \cite[Proposition 2.27]{GT}. So there exists a split epimorphism $\beta:M_1^{++}\to M_1$ in $\Mod R$
such that $\beta\sigma_{M_1}=1_{M_1}$, and hence $(\sigma_{M_1})^+\beta^+=1_{M_1^+}$.
On the other hand, we also have $(\sigma_{M_1})^+\sigma_{M_1^+}=1_{M_1^+}$ by
\cite[Proposition 20.14(1)]{AF}. It follows that $$\beta^+=\sigma_{M_1^+}.\eqno{(4.5)}$$
Since the following diagram
$$\xymatrix{
M_1^+ \ar[d]_{\sigma_{M_1^+}} \ar[r]^{f}& M_2^+ \ar[d]^{\sigma_{M_2^+}} \\
M_1^{+++} \ar[r]^{f^{++}} & M_2^{+++} }$$
is commutative, we have $\sigma_{M_2^+}f=f^{++}\sigma_{M_1^+}$. Then by
\cite[Proposition 20.14(1)]{AF} and (4.5), we have
$$f=1_{M_2^+}f=(\sigma_{M_2})^+\sigma_{M_2^+}f
=(\sigma_{M_2})^+f^{++}\sigma_{M_1^+}=(\sigma_{M_2})^+f^{++}\beta^+=(\beta f^+\sigma_{M_2})^+.$$
Set $g:=\beta f^+\sigma_{M_2}$. Then $f=g^+$.
\end{proof}

The assertions in the following result are the $C$-versions of \cite[Theorem 2.1]{F}
and \cite[Theorem 3.6]{H} respectively.

\begin{thm} \label{thm-4.17}
For any $M\in\Mod R$, we have
\begin{enumerate}
\item[$(1)$] $\mathcal{F}_C(R){\text -}\pd_RM=\mathcal{I}_C(R^{op}){\text -}\id_{R^{op}}M^+$.
\item[$(2)$] $\GCfd_RM\geq\GCid_{R^{op}}M^+$ with equality if $S$ is a right coherent ring.
\end{enumerate}
\end{thm}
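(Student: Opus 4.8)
The plan is to move both sides of the claimed (in)equalities to the ring $S$ through $\Hom(C,-)$ and $C\otimes_S-$ and then read them off Lemmas \ref{lem-4.14}--\ref{lem-4.16}, using throughout that $(-)^+$ is faithfully exact together with the adjunctions $(C\otimes_SX)^+\cong\Hom_{S^{op}}(C,X^+)$, $(B\otimes_RX)^+\cong\Hom_{R^{op}}(B,X^+)$, $[\Tor_n^R(B,X)]^+\cong\Ext_{R^{op}}^n(B,X^+)$ and the identities of \cite[Lemma 2.16]{GT}. For (1) the first observation is that $(-)^+$ sends $\mathcal{F}_C(R)$ into $\mathcal{I}_C(R^{op})$: if $F$ is flat over $S$ then $F^+$ is injective over $S^{op}$ by \cite[Theorem 2.1]{F}, and $(C\otimes_SF)^+\cong\Hom_{S^{op}}(C,F^+)=(F^+)_*$. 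Dualizing a length-$n$ $\mathcal{F}_C(R)$-resolution of $M$ (with $n=\mathcal{F}_C(R){\text -}\pd_RM<\infty$; the infinite case being trivial) thus produces a $\mathcal{I}_C(R^{op})$-coresolution of $M^+$ of length $n$, so $\mathcal{I}_C(R^{op}){\text -}\id_{R^{op}}M^+\le\mathcal{F}_C(R){\text -}\pd_RM$.

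For the reverse inequality in (1) it is enough, by Lemmas \ref{lem-4.14} and \ref{lem-4.15}, to establish the duality $M\in\mathcal{B}_C(R)\Leftrightarrow M^+\in\mathcal{A}_C(R^{op})$ (in fact only ``$\Leftarrow$'' is needed): granting it, $\mathcal{I}_C(R^{op}){\text -}\id_{R^{op}}M^+\le n$ unwinds via Lemma \ref{lem-4.15}(2) to ``$M^+\in\mathcal{A}_C(R^{op})$ and $\id_{S^{op}}M^+\otimes_RC\le n$'', hence (by the duality and Lemma \ref{lem-4.14}) to ``$M\in\mathcal{B}_C(R)$ and $\fd_SM_*\le n$'', hence to $\mathcal{F}_C(R){\text -}\pd_RM\le n$ by Lemma \ref{lem-4.15}(1). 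The duality itself I would verify one defining condition at a time: the displayed identities convert each $\Ext$- or $\Tor$-vanishing of one class into the matching vanishing of the other, while $(M^+\otimes_RC)_*\cong(C\otimes_SM_*)^+$ and naturality of the evaluation maps identify $\mu_{M^+}$ with $\theta_M^+$, so $\mu_{M^+}$ is an isomorphism precisely when $\theta_M$ is, by faithful exactness of $(-)^+$. (Alternatively this Bass--Auslander character duality may simply be quoted.)

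For the inequality in (2): when $\GCfd_RM=\infty$ there is nothing to prove, and otherwise dualizing a $\mathcal{GF}_C(R)$-resolution of $M$ of length $\GCfd_RM$ reduces the claim to the case $\GCfd_RM=0$, i.e. to ``$G\in\mathcal{GF}_C(R)$ implies $G^+\in\mathcal{GI}_C(R^{op})$''. Using the description of $\mathcal{GI}_C(R^{op})$ in Remark \ref{rem-4.4}(3) I must produce (i) $G^+\in\mathcal{I}_C(R^{op})^{\bot}$, which follows from $\Ext_{R^{op}}^n(I_*,G^+)\cong[\Tor_n^R(I_*,G)]^+$ and $G\in\mathcal{I}_C(R^{op})^{\top}$, and (ii) a $\Hom_{R^{op}}(\mathcal{I}_C(R^{op}),-)$-exact exact sequence $\cdots\to E_1\to E_0\to G^+\to0$ with $E_i\in\mathcal{I}_C(R^{op})$, obtained by applying $(-)^+$ to the defining $(\mathcal{I}_C(R^{op})\otimes_R-)$-exact coresolution $0\to G\to Q^0\to Q^1\to\cdots$ of $G$ ($Q^i\in\mathcal{F}_C(R)$, hence $(Q^i)^+\in\mathcal{I}_C(R^{op})$ as above), the exactness condition being transferred by $(B\otimes_RX)^+\cong\Hom_{R^{op}}(B,X^+)$. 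Dualizing the length-$\GCfd_RM$ resolution of $M$ then gives $\GCid_{R^{op}}M^+\le\GCfd_RM$.

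For the equality in (2) under the hypothesis that $S$ is right coherent, the content is the converse: $\GCid_{R^{op}}M^+\le n$ should force $\GCfd_RM\le n$. The natural attempt is to dualize a $\mathcal{GI}_C(R^{op})$-coresolution of $M^+$ of length $n$; Lemma \ref{lem-4.16}(1) — which is exactly where right coherence of $S$ enters — gives $(I_*)^+\in\mathcal{F}_C(R)$, so that $(-)^+$ carries $\mathcal{I}_C(R^{op})$-resolutions to $\mathcal{F}_C(R)$-coresolutions, and, tracking the $\Tor$- and $\Ext$-conditions with the character-module identities and Lemma \ref{lem-4.16}(1) again, one obtains $N^+\in\mathcal{GF}_C(R)$ for $N\in\mathcal{GI}_C(R^{op})$, hence $\GCfd_RM^{++}\le n$. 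The step I expect to be the main obstacle is the descent from $M^{++}$ to $M$, i.e. $\GCfd_RM\le\GCfd_RM^{++}$: for this I would exploit that $\sigma_M\colon M\to M^{++}$ is a pure monomorphism, that every object of $\mathcal{I}_C(R^{op})$ is a direct summand of a character module over $R$ (so $M^+$ and the $E_i$ are pure-injective over $R$, by Lemma \ref{lem-4.16}(1)), and the factoring device Lemma \ref{lem-4.16}(2). Away from this descent, the whole argument is a systematic chase of standard natural isomorphisms, the one recurring subtlety being to check that the relevant notion of exactness ($\Hom(\mathscr{E},-)$-, $(\mathscr{E}\otimes_R-)$-, or ordinary) is preserved by each application of $(-)^+$.
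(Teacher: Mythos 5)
Parts (1) and the inequality $\GCfd_RM\geq\GCid_{R^{op}}M^+$ in (2) are correct and essentially identical to the paper's argument: (1) is the chain Lemma \ref{lem-4.15}(1) $\Leftrightarrow$ (character duality $M\in\mathcal{B}_C(R)\Leftrightarrow M^+\in\mathcal{A}_C(R^{op})$, quoted from \cite[Proposition 3.2(b)]{Hu3}, plus Lemma \ref{lem-4.14}) $\Leftrightarrow$ Lemma \ref{lem-4.15}(2), and the inequality in (2) comes from $G\in\mathcal{GF}_C(R)\Rightarrow G^+\in\mathcal{GI}_C(R^{op})$ exactly as you describe. The problem is the equality in (2), and you have correctly located it but not solved it. Two gaps. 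First, your intermediate claim ``$N\in\mathcal{GI}_C(R^{op})\Rightarrow N^+\in\mathcal{GF}_C(R)$'' does not follow by dualizing the defining resolution $\cdots\to E_1\to E_0\to N\to 0$: to check that $0\to N^+\to E_0^+\to E_1^+\to\cdots$ is $(\mathcal{I}_C(R^{op})\otimes_R-)$-exact (and that $N^+\in\mathcal{I}_C(R^{op})^{\top}$) the available natural isomorphism only gives $(I_*\otimes_RY^+)^+\cong\Hom_{R^{op}}(I_*,Y^{++})$, i.e.\ it lands on \emph{double} duals, and exactness of $\Hom_{R^{op}}(I_*,\mathbb{E})$ does not transfer to $\Hom_{R^{op}}(I_*,\mathbb{E}^{++})$ for general $I_*$ (which need not be finitely presented). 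Second, even granting that claim, the descent $\GCfd_RM\leq\GCfd_RM^{++}$ is left open; the natural tool you point to --- closure of $\{\GCfd_R\leq n\}$ under pure submodules --- is Corollary \ref{cor-4.18}(2) of the paper, which is itself deduced \emph{from} Theorem \ref{thm-4.17}(2), so invoking it here would be circular.

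The paper's proof sidesteps both problems by proving a differently quantified claim: for $G\in\Mod R$, if $G^+\in\mathcal{GI}_C(R^{op})$ then $G\in\mathcal{GF}_C(R)$. Because the resolved module is already a character dual, its left $\mathcal{I}_C(R^{op})$-resolution can be modified (adding injective summands and using $(I_*)^{++}\cong(I^{++})_*$ from Lemma \ref{lem-4.16}(1), which is where right coherence of $S$ enters) into one whose terms are double character duals and whose maps, by the pure-injectivity factoring device Lemma \ref{lem-4.16}(2), have the form $(g_i)^+$; un-dualizing then yields the required $(\mathcal{I}_C(R^{op})\otimes_R-)$-exact coresolution of $G$ by modules in $\mathcal{F}_C(R)$. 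With this claim in hand, the equality needs no double dual of $M$ at all: take any exact sequence $0\to K_n\to G_{n-1}\to\cdots\to G_0\to M\to 0$ with all $G_i\in\mathcal{GF}_C(R)$, dualize it to a $\mathcal{GI}_C(R^{op})$-coresolution of $M^+$, deduce $K_n^+\in\mathcal{GI}_C(R^{op})$ from $\GCid_{R^{op}}M^+\leq n$ and Lemma \ref{lem-3.1'}(1.4), and apply the claim to $K_n$. You should restructure your converse along these lines.
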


\begin{proof}
(1) For any $n\geq 0$, we have
\begin{align*}
& \ \ \ \  \ \  \ \ \mathcal{F}_C(R){\text -}\pd_RM\leq n\\
&\ \ \ \Leftrightarrow M\in\mathcal{B}_C(R)\ {\rm and}\ \fd_SM_*\leq n
\ \ \text{(by Lemma \ref{lem-4.15}(1))}\\
&\ \ \ \Leftrightarrow M^+\in\mathcal{A}_C(R^{op})\ {\rm and}\ \id_{S^{op}}M^+\otimes_RC\leq n
\ \ \text{(by \cite[Proposition 3.2(b)]{Hu3} and Lemma \ref{lem-4.14})}\\
&\ \ \ \Leftrightarrow \mathcal{I}_C(R^{op}){\text -}\id_{R^{op}}M^+\leq n.
\ \ \text{(by Lemma \ref{lem-4.15}(2))}
\end{align*}

(2) Let $E\in\mathcal{I}_C(R^{op})$ and $n\geq 1$. By \cite[Lemma 2.16(a)(b)]{GT}, we have
$$(E\otimes_R-)^+\cong\Hom_{R^{op}}(E,(-)^+),\eqno{(4.6)}$$
$$[\Tor_n^R(E,-)]^+\cong\Ext_{R^{op}}^n(E,(-)^+).\eqno{(4.7)}$$
If $G\in\mathcal{GF}_C(R)$, then $G\in\mathcal{I}_C(R^{op})^{\top}$ and there exists an
$(\mathcal{I}_C(R^{op})\otimes_R-)$-exact exact sequence
$$0\rightarrow G\rightarrow Q^0\rightarrow Q^1\rightarrow \cdots\rightarrow Q^i\rightarrow \cdots$$
in $\Mod R$ with all $Q^i$ in $\mathcal{F}_C(R)$.
It follows from (1) and the above two isomorphisms that
$G^+\in{\mathcal{I}_C(R^{op})^{\bot}}\cap\widetilde{\res\mathcal{I}_C(R^{op})}$, and
thus $G^+\in\mathcal{GI}_C(R^{op})$ by Remark \ref{rem-4.4}(3)$(b)$.
Then it is easy to get $\GCfd_RM\geq\GCid_{R^{op}}M^+$ for any $M\in\Mod R$.

Now let $S$ be a right coherent ring and $G\in\Mod R$.

{\bf Claim.} If $G^+\in\mathcal{GI}_C(R^{op})$, then $G\in\mathcal{GF}_C(R)$.

By Remark \ref{rem-4.4}(3)$(b)$, we have
$G^+\in{\mathcal{I}_C(R^{op})^{\bot}}\cap\widetilde{\res\mathcal{I}_C(R^{op})}$.
It follows from (4.7) that $G\in{\mathcal{I}_C(R^{op})^{\top}}$.
In addition, there exists the following $\Hom_{R^{op}}(\mathcal{I}_C(R^{op}),-)$-exact exact sequence
$$\cdots\to(I_i)_*\to\cdots\to(I_1)_*\to(I_0)_*\to G^+\to 0\eqno{(4.8)}$$
in $\Mod R^{op}$ with all $I_i$ injective right $S$-modules. Set $K_i:=\Im((I_i)_*\to(I_{i-1})_*)$
for any $i\geq 1$. Since $I_0\oplus I_0^{'}\cong I_0^{++}$
for some injective right $S$-module $I_0^{'}$, from Lemma \ref{lem-4.16}(1) and the exact sequence (4.8)
we get the following
$\Hom_{R^{op}}(\mathcal{I}_C(R^{op}),-)$-exact short exact sequence
$$0\to K_1\oplus(I_0^{'})_*\to(I_0)_*\oplus(I_0^{'})_*(\cong((I_0)_*)^{++})\to G^+\to 0$$
in $\Mod R^{op}$. Similarly, since $(I_1\oplus I_0^{'})\oplus I_1^{'}\cong(I_1\oplus I_0^{'})^{++}$
for some injective right $S$-module $I_1^{'}$, from Lemma \ref{lem-4.16}(1) and the exact sequence (4.8)
we get the following $\Hom_{R^{op}}(\mathcal{I}_C(R^{op}),-)$-exact short exact sequence
$$0\to K_2\oplus(I_1^{'})_*\to(I_1)_*\oplus(I_0^{'})_*\oplus(I_1^{'})_*
(\cong((I_1\oplus I_0^{'})_*)^{++})\to K_1\oplus(I_0^{'})_*\to 0$$
in $\Mod R^{op}$. Continuing this process and splicing these obtained short exact sequences, we get
the following $\Hom_{R^{op}}(\mathcal{I}_C(R^{op}),-)$-exact exact sequence
$$\cdots\to((I_i\oplus I_{i-1}^{'})_*)^{++}\to\cdots\to((I_1\oplus I_0^{'})_*)^{++}\to((I_0)_*)^{++}\to G^+\to 0
\eqno{(4.9)}$$
in $\Mod R^{op}$ with all $I_i^{'}$ injective right $S$-modules. Since $(I_0)_*)^{+}$
and all $(I_i\oplus I_{i-1}^{'})_*)^{+}$
are pure injective by \cite[Proposition 5.3.7]{EJ}, according to Lemma \ref{lem-4.16}(2) we can rewrite (4.9) as follows:
$$\cdots\to((I_i\oplus I_{i-1}^{'})_*)^{++}\buildrel{(g_i)^+}\over\longrightarrow\cdots\to((I_1\oplus I_0^{'})_*)^{++}
\buildrel{(g_1)^+}\over\longrightarrow((I_0)_*)^{++}\buildrel{(g_0)^+}\over\longrightarrow G^+\to 0.$$
Then by (4.6), we get the following $(\mathcal{I}_C(R^{op})\otimes_R-)$-exact exact sequence
$$0\to G\buildrel{g_0}\over\longrightarrow((I_0)_*)^{+}\buildrel{g_1}\over\longrightarrow((I_1\oplus I_0^{'})_*)^{+}
\to\cdots\buildrel{g_i}\over\longrightarrow((I_i\oplus I_{i-1}^{'})_*)^{+}\to\cdots$$
in $\Mod R$. By Lemma \ref{lem-4.16}(1), we have that $((I_0)_*)^{+}$ and all $((I_i\oplus I_{i-1}^{'})_*)^{+}$
are in $\mathcal{F}_C(R)$. Consequently we conclude that $G\in\mathcal{GF}_C(R)$. The claim is proved.

Let $M\in\Mod R$ with $\GCid_{R^{op}}M^+=n<\infty$, and let
$$0\to K_n\to G_{n-1}\to\cdots\to G_1\to G_0\to M\to 0$$
be an exact sequence in $\Mod R$ with all $G_i$ in $\mathcal{GF}_C(R)$.
Then we get the following exact sequence
$$0\to M^+\to G_0^+\to G_1^+\cdots\to G_{n-1}^+\to K_n^+\to 0$$
in $\Mod R^{op}$. By the former assertion, all $G_i^+$ are in $\mathcal{GI}_C(R^{op})$.
It follows from Remark \ref{rem-4.4}(3)$(b)$ and Lemma \ref{lem-3.1'}(1) that
$K_n^+\in\mathcal{GI}_C(R^{op})$. Then $K_n\in\mathcal{GF}_C(R)$ by the above claim,
and thus $\GCfd_RM\leq n$.
\end{proof}

As a consequence, we get the following result, in which the assertion (1) generalizes
\cite[Lemma 5.2(a)]{HW}.

\begin{cor} \label{cor-4.18}
For any $n\geq 0$, we have
\begin{enumerate}
\item[$(1)$]
The class of left $R$-modules with $\mathcal{F}_C(R)$-projective dimension at most $n$
is closed under pure submodules and pure quotients;
in particular, the class $\mathcal{F}_C(R)$ is closed under pure submodules and pure quotients.
\item[$(2)$]
If $S$ is a right coherent ring, then the class of left $R$-modules with
$\mathcal{GF}_C(R)$-projective dimension at most $n$ is closed under pure submodules and pure quotients;
in particular, the class $\mathcal{GF}_C(R)$ is closed
under pure submodules and pure quotients.
\end{enumerate}
\end{cor}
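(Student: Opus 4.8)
The plan is to read off both parts from Theorem \ref{thm-4.17} by dualizing with the character-module functor $(-)^+$. Since a pure submodule (respectively, pure quotient) of a module $B$ fits, by definition, into a pure exact sequence with $B$ in the middle, it suffices to show that whenever $0\to A\to B\to C\to 0$ is a pure exact sequence in $\Mod R$ with $B$ in the class under consideration, both $A$ and $C$ lie in that class. Now recall the standard fact that a short exact sequence in $\Mod R$ is pure exact if and only if its image under $(-)^+$ is a split exact sequence in $\Mod R^{op}$ (see, e.g., \cite{GT}; one may also argue via the pure injectivity of character modules, \cite[Proposition 5.3.7]{EJ}). Hence, for such a pure exact sequence, both $A^+$ and $C^+$ are isomorphic to direct summands of $B^+$ in $\Mod R^{op}$. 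Finally, I shall use that a relative homological dimension does not increase on passage to a direct summand; for the relative injective dimension this is the dual of \cite[Corollary 3.9]{Hu2}.

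First I would prove (1). Let $0\to A\to B\to C\to 0$ be pure exact with $\mathcal{F}_C(R){\text -}\pd_R B\leq n$. By Theorem \ref{thm-4.17}(1) we get $\mathcal{I}_C(R^{op}){\text -}\id_{R^{op}}B^+\leq n$; since $A^+$ and $C^+$ are direct summands of $B^+$, also $\mathcal{I}_C(R^{op}){\text -}\id_{R^{op}}A^+\leq n$ and $\mathcal{I}_C(R^{op}){\text -}\id_{R^{op}}C^+\leq n$; and a second application of Theorem \ref{thm-4.17}(1) returns $\mathcal{F}_C(R){\text -}\pd_R A\leq n$ and $\mathcal{F}_C(R){\text -}\pd_R C\leq n$. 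For (2), with $S$ right coherent, the same three steps run with Theorem \ref{thm-4.17}(2) replacing Theorem \ref{thm-4.17}(1): from $\GCfd_R B\leq n$ one obtains $\GCid_{R^{op}}B^+\leq n$ using only the unconditional inequality $\GCfd_R(-)\geq\GCid_{R^{op}}(-)^+$, then $\GCid_{R^{op}}A^+\leq n$ and $\GCid_{R^{op}}C^+\leq n$ by passing to summands, and finally $\GCfd_R A\leq n$ and $\GCfd_R C\leq n$ from the equality $\GCfd_R(-)=\GCid_{R^{op}}(-)^+$, which is the single place where the right-coherence of $S$ is needed. Taking $n=0$ in each part yields the ``in particular'' clauses, since $\mathcal{F}_C(R){\text -}\pd_R M=0$ (respectively, $\GCfd_R M=0$) holds exactly when $M\in\mathcal{F}_C(R)$ (respectively, $M\in\mathcal{GF}_C(R)$), these subcategories being closed under isomorphisms.

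I do not expect a genuine obstacle here: once Theorem \ref{thm-4.17} is available, Corollary \ref{cor-4.18} is a formal consequence of it together with the two elementary facts above. The only points that need a little care are the sidedness bookkeeping (an $M\in\Mod R$ is paired with $M^+\in\Mod R^{op}$) and the reason part (1) needs no hypothesis on $S$: this is precisely because Theorem \ref{thm-4.17}(1) is an unconditional equality, whereas part (2) must carry over the right-coherence assumption from Theorem \ref{thm-4.17}(2).
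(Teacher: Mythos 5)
Your proposal is correct and follows essentially the same route as the paper: dualize the pure exact sequence to a split one via $(-)^+$, transfer the bound through Theorem \ref{thm-4.17}, pass to direct summands of $B^+$, and transfer back. The only cosmetic difference is that the paper takes a moment to verify explicitly that $\mathcal{I}_C(R^{op})$ and $\mathcal{GI}_C(R^{op})$ are closed under direct summands (so that the relative injective dimension is indeed non-increasing on summands, via \cite[Corollary 4.9]{Hu2}), a point you assert by appeal to the dual of \cite[Corollary 3.9]{Hu2}.
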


\begin{proof}
(1) Let
$$0\to K\to G \to L\to 0$$
be a pure exact sequence in $\Mod R$ with $\mathcal{F}_C(R)$-$\pd_RG\leq n$.
Then by \cite[Proposition 5.3.8]{EJ}, the induced exact sequence
$$0\to L^+\to G^+\to K^+\to 0$$
splits and both $K^+$ and $L^+$ are direct summands of $G^+$.
By Theorem \ref{thm-4.17}(1), we have $\mathcal{I}_C(R^{op})$-$\id_{R^{op}}G^+\leq n$.
Since $\mathcal{I}_C(R^{op})$ is closed under direct summands by \cite[Proposition 5.1(c)]{HW},
the class of right $R$-modules with $\mathcal{I}_C(R^{op})$-injective dimension at most $n$
is closed under direct summands by \cite[Corollary 4.9]{Hu2}. It follows that
$\mathcal{I}_C(R^{op})$-$\id_{R^{op}}K^+\leq n$ and $\mathcal{I}_C(R^{op})$-$\id_{R^{op}}L^+\leq n$.
Thus $\mathcal{F}_C(R)$-$\pd_RK\leq n$ and $\mathcal{F}_C(R)$-$\pd_RL\leq n$
by Theorem \ref{thm-4.17}(1) again.

(2) It is trivial that $\mathcal{I}_C(R^{op})^{\bot}$ is closed under direct summands.
By \cite[Theorem 4.6(1)]{Hu1}, the class $\widetilde{\res\mathcal{I}_C(R^{op})}$ is
closed under direct summands. Notice that
$$\mathcal{GI}_C(R^{op})={\mathcal{I}_C(R^{op})^{\bot}}\cap\widetilde{\res\mathcal{I}_C(R^{op})}$$
by Remark \ref{rem-4.4}(3)(b), thus $\mathcal{GI}_C(R^{op})$ is also closed under direct summands.
We also know from Remark \ref{rem-4.4}(3)(b) that $\mathcal{GI}_C(R^{op})$ is coresolving in $\Mod R^{op}$.
Thus the class of right $R$-modules with $\mathcal{GI}_C(R^{op})$-injective dimension at most
$n$ is closed under direct summands by \cite[Corollary 4.9]{Hu2}.
Now applying Theorem \ref{thm-4.17}(2), we obtain the assertion by using an argument
similar to that in the proof of (1).
\end{proof}

In the following result, the assertion (1) is the $C$-version of \cite[Theorem 2.2]{B2}.
The assertion (3) means that the assumption ``$R$ is a right coherent ring"
in \cite[Theorem 3.24]{H} is superfluous; compare it with Corollaries \ref{cor-4.6}(2) and \ref{cor-4.7}(2).

\begin{thm} \label{thm-4.19}
\begin{enumerate}
\item[]
\item[$(1)$]
For any $M\in\Mod R$, we have
$$\GCfd_RM\leq\mathcal{F}_C(R){\text -}\pd_RM$$
with equality if $\mathcal{F}_C(R){\text -}\pd_RM<\infty$.
\item[$(2)$]
$\mathcal{F}_C(R){\text -}\FPD\leq\mathcal{GF}_C(R){\text -}\FPD$
with equality if $\mathcal{GF}_C(R)$ is closed under extensions.
\item[$(3)$] $\mathcal{F}(R){\text -}\FPD=\mathcal{GF}(R){\text -}\FPD$.
\end{enumerate}
\end{thm}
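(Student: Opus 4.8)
The plan is to establish the three assertions in order, bootstrapping each from the previous one, with the genuine content confined to the equality in (1). For (1), note first that $\mathcal{F}_C(R)$ is a coproper cogenerator for $\mathcal{GF}_C(R)$ (Remark \ref{rem-4.4}(8)), so in particular $\mathcal{F}_C(R)\subseteq\mathcal{GF}_C(R)$; hence any $\mathcal{F}_C(R)$-resolution of $M$ is a $\mathcal{GF}_C(R)$-resolution and $\GCfd_R M\leq\mathcal{F}_C(R)\text{-}\pd_R M$ unconditionally. Now assume $n:=\mathcal{F}_C(R)\text{-}\pd_R M<\infty$; I would obtain the reverse inequality by passing to the character module $M^{+}\in\Mod R^{op}$. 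By Theorem \ref{thm-4.17}(1), $\mathcal{F}_C(R)\text{-}\pd_R M=\mathcal{I}_C(R^{op})\text{-}\id_{R^{op}}M^{+}$, so $\mathcal{I}_C(R^{op})\text{-}\id_{R^{op}}M^{+}=n<\infty$. By Remark \ref{rem-4.4}(3)(b), $\mathcal{GI}_C(R^{op})$ is coresolving in $\Mod R^{op}$, admits an $\mathcal{I}_C(R^{op})$-proper generator $\mathcal{I}_C(R^{op})$, and satisfies $\mathcal{GI}_C(R^{op})\subseteq\mathcal{I}_C(R^{op})^{\bot}$, so Corollary \ref{cor-3.5'}(2), applied in $\Mod R^{op}$ with $\mathscr{T}=\mathcal{GI}_C(R^{op})$ and $\mathscr{E}=\mathscr{C}=\mathcal{I}_C(R^{op})$, gives $\GCid_{R^{op}}M^{+}=\mathcal{I}_C(R^{op})\text{-}\id_{R^{op}}M^{+}=n$. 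Finally Theorem \ref{thm-4.17}(2) gives $\GCid_{R^{op}}M^{+}\leq\GCfd_R M$. Chaining these,
\[
n=\mathcal{F}_C(R)\text{-}\pd_R M=\GCid_{R^{op}}M^{+}\leq\GCfd_R M\leq\mathcal{F}_C(R)\text{-}\pd_R M=n ,
\]
so all terms coincide and $\GCfd_R M=\mathcal{F}_C(R)\text{-}\pd_R M$.

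For (2): if $\mathcal{F}_C(R)\text{-}\pd_R M<\infty$ then (1) yields $\GCfd_R M=\mathcal{F}_C(R)\text{-}\pd_R M<\infty$, so $\mathcal{F}_C(R)\text{-}\pd_R M\leq\mathcal{GF}_C(R)\text{-}\FPD$; taking the supremum over all such $M$ gives $\mathcal{F}_C(R)\text{-}\FPD\leq\mathcal{GF}_C(R)\text{-}\FPD$. For the reverse inequality when $\mathcal{GF}_C(R)$ is closed under extensions: by Remark \ref{rem-4.4}(8) this hypothesis makes $\mathcal{GF}_C(R)$ resolving in $\Mod R$, and it always admits an $\mathcal{I}_C(R^{op})^{+}$-coproper cogenerator $\mathcal{F}_C(R)$, so Corollary \ref{cor-3.5}(1) (with $\mathscr{T}=\mathcal{GF}_C(R)$, $\mathscr{E}=\mathcal{I}_C(R^{op})^{+}$ and $\mathscr{C}=\mathcal{F}_C(R)$) gives $\mathcal{GF}_C(R)\text{-}\FPD\leq\mathcal{F}_C(R)\text{-}\FPD$, hence equality. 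For (3), I would specialize (2) to $C=R$ (hence $S=R$), where $\mathcal{F}_C(R)=\mathcal{F}(R)$ and $\mathcal{GF}_C(R)=\mathcal{GF}(R)$: this already gives $\mathcal{F}(R)\text{-}\FPD\leq\mathcal{GF}(R)\text{-}\FPD$, with equality as soon as $\mathcal{GF}(R)$ is closed under extensions; and $\mathcal{GF}(R)$ is resolving in $\Mod R$ for an arbitrary ring $R$ by Remark \ref{rem-4.4}(7), hence closed under extensions. Note that no coherence hypothesis on $R$ intervenes: coherence was used in Theorem \ref{thm-4.17}(2) only for the inequality $\GCfd_R M\leq\GCid_{R^{op}}M^{+}$, which none of the above invokes.

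The step I expect to be the main obstacle is the equality in (1). Its two substantive ingredients are that the character-module functor $(-)^{+}$ carries $\mathcal{GF}_C(R)$ into $\mathcal{GI}_C(R^{op})$, which is the heart of Theorem \ref{thm-4.17}(2) and already available, and that over $R^{op}$ the $C$-injective dimension computes $\GCid_{R^{op}}$ exactly on modules of finite $C$-injective dimension (Corollary \ref{cor-3.5'}(2), i.e. the opposite-side form of Corollary \ref{cor-4.7}(1)); once these are in place, everything reduces to bookkeeping with the monotonicity of relative homological dimension under enlargement of the ambient subcategory.
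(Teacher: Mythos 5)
Your proposal is correct and follows essentially the same route as the paper: the unconditional inequality from $\mathcal{F}_C(R)\subseteq\mathcal{GF}_C(R)$, the equality in (1) via Theorem \ref{thm-4.17}(1), the identity $\GCid_{R^{op}}M^{+}=\mathcal{I}_C(R^{op})\text{-}\id_{R^{op}}M^{+}$ (the paper cites Corollary \ref{cor-4.7}(1), which you re-derive from Corollary \ref{cor-3.5'}(2)), and the coherence-free half of Theorem \ref{thm-4.17}(2); for (2) the paper invokes Corollary \ref{cor-3.4}(2) where you invoke its packaged consequence Corollary \ref{cor-3.5}(1), and (3) is the same specialization $_RC_S={_RR_R}$. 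The only differences are cosmetic, and your explicit remark that coherence of $S$ is never needed matches the paper's implicit use.
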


\begin{proof}
(1) Since $\mathcal{GF}_C(R)\subseteq\mathcal{F}_C(R)$, we have
$\GCfd_RM\leq\mathcal{F}_C(R){\text -}\pd_RM$ for any $M\in\Mod R$.
Now let $\mathcal{F}_C(R){\text -}\pd_RM<\infty$. Then
\begin{align*}
& \ \ \ \  \ \  \ \ \mathcal{I}_C(R^{op}){\text -}\id_{R^{op}}M^+<\infty
\ \ \text{(by Theorem \ref{thm-4.17}(1))}\\
&\ \ \ \Rightarrow \GCid_{R^{op}}M^+=
\mathcal{I}_C(R^{op}){\text -}\id_{R^{op}}M^+\ \ \text{(by Corollary \ref{cor-4.7}(1))}\\
&\ \ \ \Rightarrow \GCfd_RM\geq\mathcal{F}_C(R){\text -}\pd_RM
\ \ \text{(by Theorem \ref{thm-4.17})}\\
&\ \ \ \Rightarrow \GCfd_RM=\mathcal{F}_C(R){\text -}\pd_RM.
\end{align*}

(2) The assertion that $\mathcal{F}_C(R){\text -}\FPD\leq\mathcal{GF}_C(R){\text -}\FPD$
follows from (1).

It is trivial that $\mathcal{P}(R)\subseteq\mathcal{F}(R)\subseteq\mathcal{GF}_C(R)$.
By Remark \ref{rem-4.4}(8), we have that
$$\mathcal{GF}_C(R)={^{\bot}(\mathcal{I}_C(R^{op})^+)}\cap\widetilde{\cores_{\mathcal{I}_C(R^{op})^+}\mathcal{F}_C(R)}$$
and it admits an $\mathcal{I}_C(R^{op})^+$-coproper cogenerator $\mathcal{F}_C(R)$.
If $\mathcal{GF}_C(R)$ is closed under extensions, then $\mathcal{GF}_C(R)$ is resolving in $\Mod R$ by Proposition \ref{prop-3.6}.
Now let $M\in\Mod R$ with $\GCfd_RM=n<\infty$.
By Corollary \ref{cor-3.4}(2), there exists an exact sequence
$$0\to M\to K^{'}\to T^{'}\to 0$$
in $\Mod R$ with $\mathcal{F}_C(R){\text -}\pd K^{'}=n$.
It follows that $\mathcal{GF}_C(R){\text -}\FPD\leq\mathcal{F}_C(R){\text -}\FPD$.

(3) Since $\mathcal{GF}(R)$ is closed under extensions by \cite[Theorem 4.11]{SS},
the assertion follows from (2) by putting ${_RC_S}={_RR_R}$.
\end{proof}

\end{document}